\newtheorem{ThmIntro}{Theorem}
\newtheorem{CorIntro}[ThmIntro]{Corollary}
\newtheorem{PropIntro}[ThmIntro]{Proposition}
\newtheorem{thm}{Theorem}[section]
\newtheorem{cor}[thm]{Corollary}
\newtheorem{lem}[thm]{Lemma}
\newtheorem{prop}[thm]{Proposition}
\theoremstyle{definition}
\newtheorem{defn}[thm]{Definition}
\theoremstyle{remark}
\newtheorem{rem}[thm]{Remark}
\newtheorem{ex}[thm]{Example}
\numberwithin{equation}{section}
\begin{document}

\title[Contracting automorphisms and $L^p$-cohomology]{Contracting automorphisms and $L^p$-cohomology in degree one}%{On $L^p$-cohomology in degree one of algebraic groups over a local field}
\author{Yves Cornulier, Romain Tessera}%
\date{January 10, 2010}%{\today}
\subjclass[2000]{Primary 43A15, Secondary 22D05, 22D45, 22E15}

%22D05  	General properties and structure of locally compact groups
%22D45  	Automorphism groups of locally compact groups
%22E15  	General properties and structure of real Lie groups
%22E25  	Nilpotent and solvable Lie groups
%28C10  	Set functions and measures on topological groups, Haar measures, invariant measures
%43A15  	$L^p$-spaces and other function spaces on groups, semigroups, etc.

%\keywords{}%

% ----------------------------------------------------------------

\maketitle

\begin{abstract}
We characterize those Lie groups, and algebraic groups over a local field of
characteristic zero, whose first reduced $L^p$-cohomology is zero
for all $p>1$, extending a result of Pansu. As an application, we obtain a description
of Gromov-hyperbolic groups among those groups. In particular we prove that any non-elementary Gromov-hyperbolic algebraic group over a non-Archimedean local field of zero characteristic is quasi-isometric to a 3-regular tree. We also extend the study to general semidirect products of a locally compact group by a cyclic group acting by contracting automorphisms. 
\end{abstract}

% ----------------------------------------------------------------
\section{Introduction}

Let $G$ be a locally compact group, endowed with a {\em left} Haar
measure. Let $\rho=\rho_G$ denote the {\it right} regular representation
of $G$ on the space $\mathbf{R}^G$ of all real-valued functions on $G$,
defined by $(\rho(g) f)(h)=f(hg)$ for $g,h\in G, f\in \mathbf{R}^G$. 
Given $p\in
[1,+\infty[$, let $D^p(G)$ denote the set of $p$-Dirichlet functions on $G$, namely
measurable functions $f$
on $G$ such that $b(g)=f-\rho(g) f$ belongs to $L^p(G)$ for all $g\in G$, and
such that $g\mapsto b(g)$ is continuous from $G$ to $L^p(G)$. This space contains
constant functions and $L^p$
functions (indeed, the continuity of $g\mapsto b(g)$ is clear for continuous compactly
supported functions, which form a dense subspace of $L^p(G)$). 
For every compact subset $Q\subset G$, we define a seminorm on $
D^p(G)$ by
$$\|f\|_{D^p,Q}=\sup_{g\in Q}\|f-\rho(g)f\|_p.$$
We equip $D^p(G)$ with the topology induced from the seminorms $\|\cdot\|_{D^p,Q}$ for all $Q$. Then it follows from \cite[Lemma 5.2]{T1} that $D^p(G)/\mathbf{R}$ is isomorphic, as a topological vector space with a $G$-action, to the space of 1-cocycles $Z^1(G,\rho_G^p)$, where $\rho_G^p$ denotes the right regular representation on $L^p(G)$.

Set $$H_p^1(G)=D^p(G)/(L^p(G)+\mathbf{R})\simeq H^1(G,\rho_G^p)$$ and
$$\overline{H_p^1}(G)=D^p(G)/\overline{(L^p(G)+\mathbf{R})}\simeq \overline{H^1}(G,\rho_G^p).$$  The first $L^p$-cohomology and the reduced one coincide if and
only if the norms $\|f\|_p +\|f\|_{D^p,Q}$ and $\|f\|_{D^p,Q}$ are
equivalent on $L^p(G)$ for some compact subset $Q$. If $G$ is non-compact, this
happens if and only if $G$ satisfies the Sobolev inequality
$$\|f\|_p\leq C\|f\|_{D^p,Q},$$ i.e.~if and only if
\cite[Proposition~11.9]{T3} $G$ is either non-unimodular or
non-amenable. This can be reformulated as 
\begin{itemize}
\item If $G$ is non-compact, amenable and unimodular, then $H^1_p(G)$ is non-Hausdorff (and in particular is non-zero);
\item Otherwise, $H^1_p(G)=\overline{H^1_p}(G)$.
\end{itemize}

A definition of the first $L^p$-cohomology of a locally compact group in the context of metric measured spaces is given in \cite{Pan95} (see also \cite[Section 3]{T1} and Appendix \ref{qii}); the equivalence between the two definitions is obtained in \cite[Section 5]{T1}.

\begin{defn}\label{dco}~
Throughout the paper, {\it Lie groups} refer to real Lie groups.
\begin{itemize}
\item The unit component of a locally compact group $G$ is denoted by $G_0$.
\item Let $\mathbf{Z}$ act by automorphisms on a locally compact group $H$. A {\it vacuum subset} is a subset $U\subset H$ such that 
for every compact subset $M\subset H$, we have
$k\cdot M\subset U$ for $k$ large enough.
We say that $\mathbf{Z}$ {\it contracts} $H$
if there exists a compact vacuum subset $U$. The action is {\it strictly contracting} if any neighborhood of 1 is a vacuum subset (in some papers this is simply referred to as {\it contracting}).
\item We say that a Lie group $G$ with finitely many connected components is of {\it Heintze
type} if it is isomorphic to a
semidirect product $S\ltimes N$, where $N$ is a non-compact simply connected
nilpotent Lie group, $S$ contains $\mathbf{Z}$ as a cocompact
subgroup which contracts $N$.
\item We say that a locally compact group $G$ is of {\it non-Archimedean Heintze type} if it is isomorphic to a
semidirect product $S\ltimes N$, where $N$ is a 
non-compact, totally disconnected, locally compact nilpotent group, $S$ contains $\mathbf{Z}$ as a cocompact
subgroup which contracts $N$.
\item We say that a locally compact group $G$ is of {\it rank-one type} (resp.~{\it non-Archimedean rank-one type}) if for some (unique) compact normal subgroup $W$ in $G$, some finite index subgroup of $G/W$ is isomorphic to the quotient of a simple group of rank one over the reals (resp.~over some non-Archimedean local field) by its center.
\end{itemize}
\end{defn}

\begin{ex}~
\begin{itemize}
\item Let $G=(K\times\mathbf{R})\ltimes N$ be a Lie group with finitely many connected components, with $K$ compact, and assume that for every positive $t\in\mathbf{R}\subset G$, every eigenvalue $\lambda$ of $\textnormal{Ad}(t)$ acting on the Lie algebra $\mathfrak{n}$ of $N$ satisfies $|\lambda|<1$. Then $N$ is nilpotent and simply connected, $G$ is of Heintze type, and conversely every Lie group with finitely many connected components of Heintze type is of this form. Besides, Heintze's main result in \cite{H} is that a connected Lie group of dimension $\ge 2$ has a left-invariant Riemannian metric of negative curvature if and only if it is a simply connected solvable Heintze Lie group.

\item Similarly, if $G=(K\times\mathbf{K}^*)\ltimes N$ is an algebraic group over a non-Archimedean local field with $K$ compact, and for every $t\in \mathbf{K}^*$ with 
$|t|>1$, all eigenvalues $\lambda$ of $\textnormal{Ad}(t)$ acting on $\mathfrak{n}$ satisfy $|\lambda|<1$. Then $G$ is of non-Archimedean Heintze type. When $\mathbf{K}$ has characteristic zero, every connected linear algebraic $\mathbf{K}$-group of non-Archimedean Heintze type, is of this form.\end{itemize}
\end{ex}

Our main result is the following theorem. We include the statement (1) in order to give a complete picture, but only prove the other ones.

\begin{ThmIntro}\label{main}
Consider $p\in [1,+\infty)$. Let $G$ be a connected Lie group, or a linear algebraic group over a non-Archimedean local field of characteristic zero.
\begin{itemize}
\item[(1)]\cite{T1,T2} If $G$ is amenable and unimodular, then $\overline{H_p^1}(G)=0$ for all $p>1$.
\item[(2)]Suppose that $G$ is Heintze or rank-one type (Lie or non-Archimedean). Then $\overline{H_p^1}(G)\neq 0$ for $p$ large enough.
\item[(3)] Otherwise, $\overline{H_p^1}(G)=0$ for all $p\ge 1$.
\end{itemize}
\end{ThmIntro}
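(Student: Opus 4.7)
The overall approach is to use the structure theory of connected Lie groups and of linear algebraic groups over a non-Archimedean local field of characteristic zero to reduce the theorem to a study of semidirect products $H\rtimes\Z$ with $\Z$ acting by contracting automorphisms, and to establish a general computation of $\overline{H_p^1}$ for such products. Heintze and non-Archimedean Heintze groups are by definition cocompact in such semidirect products, and a rank-one algebraic group contains the stabilizer of a boundary point of its symmetric space or Bruhat--Tits tree as a group of this form. Since reduced $L^p$-cohomology is a quasi-isometric invariant (Appendix \ref{qii}), replacing $G$ by a cocompact subgroup or by a quotient by a compact normal subgroup does not affect the validity of (2) and (3). The two parts thus split into a construction of non-trivial Dirichlet classes in the Heintze and rank-one cases, and a general vanishing argument for Lie and algebraic groups that avoid those structures.

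For part (2), in the Lie Heintze and real rank-one cases the non-vanishing of $\overline{H_p^1}$ for $p$ large is Pansu's classical theorem on negatively curved homogeneous Riemannian manifolds, applied through Heintze's theorem guaranteeing a negatively curved left-invariant metric. In the non-Archimedean cases I would construct a non-trivial class explicitly. Write $G=H\rtimes\Z$ with contracting generator $t$ and compact vacuum subset $U\subset H$, and define a ``height'' function $f\colon G\to\R$ by $f(h t^n)=n$. For any compact $Q\subset G$ and any $g\in Q$, the contraction property makes the support of $f-\rho(g)f$ concentrate in a fixed compact neighborhood inside $H$, so $f-\rho(g)f\in L^p(G)$ uniformly and hence $f\in D^p(G)$. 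To see $f\notin\overline{L^p(G)+\R}$ for $p$ above the critical exponent determined by the eigenvalues of $t$ on $H$, I would compare the $L^p$-seminorm $\|\rho(t)f-f\|_p$, which equals the $p$-th root of the measure of $U$, against the exponential volume growth of $H$-balls swept out by iterating $t^{-1}$; this mirrors the failure of the Sobolev inequality for $p$ large on a tree and prevents approximation by elements of $L^p+\R$.

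For part (3), I would argue by cases using the Levi decomposition. If $G$ is non-amenable and not of rank-one type, its semisimple part $S$ has absolute rank at least two, and a Hochschild--Serre spectral-sequence argument applied to the extension by the amenable radical $R$ reduces vanishing of $\overline{H_p^1}(G)$ to the known higher-rank vanishing for $S$ (via Kazhdan-type rigidity for reduced $1$-cohomology in $L^p$-representations) together with the amenable case of $R$. If $G$ is amenable then, having excluded (1) and the Heintze case, $G$ is non-unimodular solvable with no cyclic subgroup acting as a strict contraction on its nilradical; here I would locate a non-trivial normal subgroup $N\triangleleft G$ on which the modular character of $G$ is trivial, so that $G/N$ is unimodular amenable and (1) applies to $G/N$, while a vanishing version of the semidirect-product theorem, exploiting precisely the non-strict contraction hypothesis, kills $\overline{H_p^1}(N)$. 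The main obstacle is this final non-Heintze amenable case: one has to identify the correct normal subgroup and prove a Sobolev-type inequality for $L^p$-cocycles that is robust enough to survive averaging in the non-contracting direction, so that any putative non-trivial reduced class is forced into $\overline{L^p(G)+\R}$.
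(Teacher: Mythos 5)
Your proposal has two concrete gaps, one in each half.

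In part (2), the ``height function'' $f(ht^n)=n$ is not a $p$-Dirichlet function: for $g=t$ one has $(\rho(t)f)(ht^n)=f(ht^{n+1})=n+1$, so $f-\rho(t)f$ is the constant function $-1$, which is not in $L^p(G)$ since $G$ is non-compact. (The same failure occurs for Busemann functions on trees.) The correct cocycles are of the opposite shape: functions that are \emph{right-invariant under} $\Z$ and vary only in the $H$-direction, e.g.\ $u(ht^n)=v(h\cdot t^{-n})$ with $v$ the indicator of a compact open subgroup of $H$ (an ``indicator of a half-tree''), as in the paper's Proposition \ref{ct}; one then checks summability of $\sum_d\Delta(\xi)^d\|b(g\cdot\xi^{-d})\|_p^p$ rather than comparing a single seminorm with volume growth. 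Your appeal to Pansu's Riemannian theorem for the Lie Heintze and real rank-one cases is legitimate as an external citation, but note the paper deliberately replaces it by the discrete construction to treat the Lie, non-Archimedean and mixed cases uniformly.

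In part (3) the central mechanism is missing. The paper's vanishing proof is not a d\'evissage along $1\to N\to G\to G/N\to 1$: neither a Hochschild--Serre argument nor your quotient argument applies, because the coefficient module restricted to $N$ is $L^p(G)|_N$, not $L^p(N)$, and reduced $L^p$-cohomology does not pass through such exact sequences; moreover no nontrivial reduced class on $G/N$ pulls back usefully, and the ``amenable unimodular'' part (1) cannot be applied to a quotient to control $G$ itself. The actual engine (Proposition \ref{lem:nonHeintze}) is: pick $\xi$ with $\Delta(\xi)>1$, use the contraction $\|\rho(\xi^{n+1})u-\rho(\xi^n)u\|_p=\Delta(\xi)^{-n/p}\|\rho(\xi)u-u\|_p$ to replace any $u\in D^p(G)$ by a cohomologous $\rho(\xi)$-invariant $u_\infty$, deduce invariance under the subgroup $W_\xi$ of elements with bounded $\xi$-conjugation orbits, propagate invariance to a non-compact normal subgroup via a centralizer lemma, and conclude $u$ is constant. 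Verifying the hypotheses (existence of such $\xi$, $Y\subset W_\xi$ non-compact, $Z$ normal non-compact centralized by $Y$) for every non-Heintze triangulable group is exactly where the weight-space analysis of $A$ acting on $\mathfrak g$ enters, and the non-amenable case is handled by passing to a cocompact non-unimodular solvable subgroup ($AN$ or $DU$) rather than by higher-rank rigidity. Finally, your case split is wrong: a non-amenable group that is not of rank-one type can have semisimple part of rank one with non-compact amenable radical (e.g.\ $\SL_2(\R)\ltimes\R^2$), a case your ``absolute rank at least two'' dichotomy omits and which requires the separate argument of the paper's Lemma \ref{nonaLIE}.
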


Pansu \cite{Pan06} obtains (2) and (3) when $G$ is a connected solvable Lie group. His approach is based on a definition of $L^p$-cohomology for Riemannian manifolds, involving differentiation. This gives rise to some technical issues, related to the fact that the gradient of an $L^p$-function need not be~$L^p$.
The equivalence between the two definitions of $L^p$-cohomology is given in \cite[Chap.~4-5]{T1}. The discrete version, given here, allows a unified proof of (2) and (3) in the Lie and non-Archimedean case. Theorem \ref{mix} below provides a more general and more precise statement, which contains (2) as a particular case. We first focus on some corollaries on Theorem \ref{main}.

\begin{CorIntro}Equivalences:
\renewcommand{\labelenumi}{(\ref{c2}.\arabic{enumi})}
\begin{enumerate}\item\label{2a} For some $p>1$, we have $\overline{H^1_p}(G)\neq 0$;
\item $G$ is of Heintze or rank-one type (Lie or non-Archimedean).
\end{enumerate}\label{c2}\end{CorIntro}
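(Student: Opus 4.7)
The plan is to deduce this corollary directly from Theorem \ref{main}, with no further input. The direction (\ref{c2}.2) $\Rightarrow$ (\ref{c2}.1) is literally part (2) of that theorem, which furnishes some (in fact all sufficiently large) $p>1$ with $\overline{H^1_p}(G)\neq 0$ whenever $G$ is of Heintze or rank-one type.

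For the converse, I would argue by contrapositive. Suppose $G$ is neither of Heintze nor of rank-one type. The three cases of Theorem \ref{main} are exhaustive, so $G$ then falls either in case (1), amenable and unimodular, or in case (3), the ``otherwise'' clause. In the first situation part (1) gives $\overline{H^1_p}(G)=0$ for all $p>1$; in the second, part (3) gives $\overline{H^1_p}(G)=0$ for all $p\ge 1$. Either way, no $p>1$ can witness (\ref{c2}.1), and the implication is proved.

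The only sanity check required is that the three cases form a disjoint and exhaustive partition, so that the ``otherwise'' in (3) is unambiguous. This is immediate from the definitions: any group of Heintze type has a contracting cyclic subgroup and is therefore non-unimodular, while any group of rank-one type has a simple non-compact quotient of rank one and is therefore non-amenable; hence (2) is disjoint from (1), and (3) covers everything else by its very formulation.

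The main obstacle in this corollary is really only bookkeeping, not mathematics; the genuine content lies in Theorem \ref{main} itself, and once that is in hand the corollary drops out as a formal consequence.
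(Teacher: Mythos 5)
Your proof is correct and follows the paper's own route exactly: the paper simply notes that Corollary \ref{c2} is immediate from Theorem \ref{main}, and your contrapositive bookkeeping (including the check that Heintze type forces non-unimodularity and rank-one type forces non-amenability, so the three cases of the theorem are exhaustive) is precisely the intended deduction.
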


Lie groups admitting a left invariant Riemannian metric with negative sectional curvature have been described algebraically in \cite{H}. An application of our main result is to provide a characterization of Gromov-hyperbolicity for Lie groups and algebraic groups over a local field of zero characteristic. This can be understood as a ``large-scale" version of Heintze's theorem \cite{H}. Note that one advantage of focusing on the large-scale geometry of a Lie group is that the statements are independent on a choice of Riemannian metric and also make sense when we consider the word length with respect to a compact generating set. In general, a locally compact group is said to be {\it Gromov-hyperbolic} if it is compactly generated and, viewed as a metric space with the word metric to some compact generating set, is a Gromov-hyperbolic metric space.

By \cite[Theorem~1.2]{CT}, a connected Lie
group (resp. an algebraic group over a local field with
characteristic zero) with exponential growth has a bi-Lipschitz
embedded 3-regular tree. It then follows from \cite[Theorem~6]{T1} that if such a group is Gromov-hyperbolic, it has
non-trivial first reduced $L^p$-cohomology for large enough $p$. Accordingly, we get the following corollary. 

\begin{CorIntro}\label{ThmIntro:Hyp}Equivalences:
\renewcommand{\labelenumi}{(\ref{c3}.\arabic{enumi})}
\begin{enumerate}
\item\label{3a} The group $G$ is non-elementary Gromov-hyperbolic; 
\item\label{3b} the group $G$ is of Heintze or rank-one type (Lie or non-Archimedean).
\end{enumerate}
\label{c3}\end{CorIntro}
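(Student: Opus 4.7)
The implication (\ref{3a})$\Rightarrow$(\ref{3b}) is essentially already spelled out in the paragraph preceding the statement: a non-elementary compactly generated Gromov-hyperbolic group has exponential growth, so by \cite[Thm.~1.2]{CT} it contains a bi-Lipschitz embedded $3$-regular tree, whence by \cite[Thm.~6]{T1} we have $\overline{H^1_p}(G)\neq 0$ for some $p>1$, and Corollary~\ref{c2} yields (\ref{3b}).

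For (\ref{3b})$\Rightarrow$(\ref{3a}) I would argue case by case. In the Lie Heintze case, in view of the example following Definition~\ref{dco}, one may write $G=(K\times\R)\ltimes N$ with $K$ compact and $\R$ contracting $N$; then $G$ is quasi-isometric to its closed cocompact subgroup $\R\ltimes N$, and Heintze's theorem \cite{H} equips this simply connected solvable Lie group with a left-invariant Riemannian metric of strictly negative sectional curvature, forcing Gromov-hyperbolicity of the word metric on $G$. In the real rank-one case, after dividing out by the compact kernel $W$ and passing to a finite-index subgroup, $G$ becomes a real simple Lie group of rank one modulo its center, acting properly and cocompactly on its negatively curved rank-one symmetric space; the non-Archimedean rank-one case is identical, with the Bruhat-Tits tree in place of the symmetric space. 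Non-elementarity in these three cases is immediate: $G$ is neither compact nor virtually cyclic.

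The remaining case, non-Archimedean Heintze type, is where I expect the real work, since no Riemannian model is available. Here $G=S\ltimes N$ with $N$ totally disconnected, locally compact, nilpotent and $\Z\leq S$ cocompact, contracting $N$ via an automorphism $\alpha$. The plan is to construct a proper cocompact action of $\Z\ltimes N$ (which is cocompact in $G$) on a regular tree of valence $\geq 3$; since any two such trees are mutually quasi-isometric, this simultaneously yields Gromov-hyperbolicity and the quasi-isometry with a $3$-regular tree announced in the abstract. One would first extract a compact open subgroup $V\subset N$ with $\alpha(V)\subsetneq V$, $\bigcap_n\alpha^n(V)=\{1\}$ and $\bigcup_n\alpha^{-n}(V)=N$; the vertex set of the tree $T$ is then the disjoint union over $n\in\Z$ of the cosets of $\alpha^{-n}(V)$ in $N$, each $x\alpha^{-n}(V)$ being joined to the unique coset $y\alpha^{-(n+1)}(V)$ containing it. The delicate points are the existence of such a $V$ (a standard result on contractive automorphisms of totally disconnected groups) and the verification that the resulting action of $\Z\ltimes N$ on $T$ is proper and cocompact, so that the Milnor-Svarc lemma produces the desired quasi-isometry.
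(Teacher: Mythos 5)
Your proposal is correct and follows the paper's route quite closely. The implication (\ref{c3}.\ref{3a})$\Rightarrow$(\ref{c3}.\ref{3b}) is exactly the argument sketched before the statement (exponential growth, \cite[Theorem~1.2]{CT}, \cite[Theorem~6]{T1}, then Corollary~\ref{c2}); the paper merely adds Proposition~\ref{qf} as a self-contained source of the quasi-isometrically embedded free subsemigroup, while noting that the results of \cite{CT} already suffice. For the converse, the paper does not prove hyperbolicity directly: it deduces (\ref{c3}.\ref{3a}) as a special case of the stronger Corollaries~\ref{c4} and~\ref{c5}, i.e.\ it builds a proper cocompact isometric action of \emph{all} of $G$ on a negatively curved manifold (this is where the $K$-invariant refinement \cite[Theorem~2]{H} of Heintze's metric is needed) or on a regular tree (Proposition~\ref{nonArchimedeanHeintze}). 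Your shortcut --- pass to the cocompact subgroup $\R\ltimes N$, resp.\ $\Z\ltimes N$, and use quasi-isometry invariance of hyperbolicity --- is legitimate and is explicitly anticipated by the paper (``the reader can prove it directly in a more straightforward way''); what you give up is only the extra content of (\ref{c4}.\ref{4c}) and (\ref{c5}.\ref{5c}). Your coset tree is the same object as the paper's: Lemma~\ref{algc} produces the compact open subgroup $\Omega$ with $\alpha(\Omega)\subset\Omega$ and Proposition~\ref{nonArchimedeanHeintze} realizes the tree as the quotient $G/\Omega$ of a Cayley graph, with $-p$ playing the role of the Busemann function.

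One caveat in your non-Archimedean case: you cannot in general arrange $\bigcap_n\alpha^n(V)=\{1\}$. The action of $\Z$ is only assumed \emph{contracting} in the sense of Definition~\ref{dco}, not strictly contracting, and the limit set $\bigcap_{n\ge 0}\alpha^n(U)$ of a compact vacuum subset is a compact subgroup that may well be nontrivial (take $N=F\times\Q_\ell$ with $F$ a nontrivial compact group on which $\Z$ acts trivially); the ``standard result'' you have in mind (\cite{Sie}) applies to strict contractions. Fortunately this condition is never used: $\alpha(V)\subsetneq V$ (with $[V:\alpha(V)]$ finite since $\alpha(V)$ is open) and $\bigcup_n\alpha^{-n}(V)=N$ already make your coset graph a regular tree of degree $1+[V:\alpha(V)]\ge 3$ on which $\Z\ltimes N$ acts properly and cocompactly with compact open vertex stabilizers, so you should simply drop the triviality-of-the-intersection requirement. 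The genuinely delicate point, as you suspect, is producing a compact open vacuum subgroup $V$ with $\alpha(V)\subset V$; this is the content of Lemmas~\ref{ac} and~\ref{algc}.
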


%(\ref{c3}.\@alph{\ref{3a}})
%(\ref{c3}.\makeatletter\@alph{\ref{3a}}\makeatother )
%(\makeatletter\@alph{\ref{3a}}\makeatother )

If we distinguish the Lie and the non-Archimedean cases, we get the two following additional corollaries

\begin{CorIntro}\label{c4}
If $G$ is a Lie group with finitely many connected components, we have equivalences:
\begin{enumerate}
\renewcommand{\labelenumi}{(\ref{c4}.\arabic{enumi})}
\item\label{4a} $G$ is [non-elementary] Gromov-hyperbolic;
\item\label{4b} $G$ is quasi-isometric to a simply connected homogeneous manifold of negative curvature [of dimension at least two];
\item\label{4c} $G$ acts properly transitively by isometries on a simply connected homogeneous manifold of negative curvature [of dimension at least two].
\end{enumerate}
\end{CorIntro}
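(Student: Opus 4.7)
The plan is to prove the cycle $(\ref{c4}.3)\Rightarrow(\ref{c4}.2)\Rightarrow(\ref{c4}.1)\Rightarrow(\ref{c4}.3)$. The first two arrows are standard. Milnor--Svarc immediately gives $(\ref{c4}.3)\Rightarrow(\ref{c4}.2)$, a proper transitive isometric action on a proper geodesic space yielding a quasi-isometry between the group and the space. For $(\ref{c4}.2)\Rightarrow(\ref{c4}.1)$, a simply connected homogeneous Riemannian manifold of negative sectional curvature has all sectional curvatures bounded above by some $-\kappa<0$ (by homogeneity), hence is CAT$(-\kappa)$ and in particular Gromov-hyperbolic; quasi-isometry invariance of hyperbolicity among proper geodesic spaces then transfers this to $G$. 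If the manifold has dimension $n\geq 2$, its visual boundary $S^{n-1}$ has infinitely many points, witnessing non-elementarity.

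For the main implication $(\ref{c4}.1)\Rightarrow(\ref{c4}.3)$, I would apply Corollary \ref{c3}: $G$ is of Heintze or rank-one type (Lie or non-Archimedean), and under the Lie hypothesis the non-Archimedean options must be excluded. In the non-Archimedean Heintze case, $G=S\ltimes N$ has $N$ closed, totally disconnected and non-compact; a closed totally disconnected subgroup of a Lie group is discrete, and then $N\cap G_0=\{1\}$ embeds $N$ into the finite group $G/G_0$, making $N$ finite and contradicting non-compactness. In the non-Archimedean rank-one case, $G/W$ is Lie and has a finite-index subgroup that is totally disconnected, so $(G/W)_0$ is trivial, $G/W$ is discrete, $G_0\subset W$ is compact, and $G$ itself is compact, hence elementary Gromov-hyperbolic---a contradiction.

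It remains to exhibit the homogeneous negatively curved manifold and the desired proper transitive action in the two surviving cases. For Lie Heintze type, the example following Definition \ref{dco} writes $G=(K\times\R)\ltimes N$ with $K$ compact and positive $t\in\R$ acting on $\mathfrak{n}$ with all eigenvalues of modulus $<1$. Heintze's theorem \cite{H} furnishes a left-invariant negatively curved Riemannian metric on $R:=\R\ltimes N$; since $K$ commutes in $G$ with the contracting derivation $D$, averaging the defining inner product on $\mathfrak{n}$ over $K$ preserves the positive definiteness of the quadratic form $v\mapsto\langle Dv,v\rangle$, producing a $K$-invariant left-invariant metric on $R$ that still satisfies Heintze's criterion; then $G=K\ltimes R$ acts on $R$ properly transitively by isometries. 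For Lie rank-one type, letting $X$ denote the symmetric space of the ambient real rank-one simple Lie group $H$, one constructs a continuous homomorphism $G\to\textnormal{Isom}(X)$ with compact kernel whose image contains (a conjugate of) $H/Z(H)$; since the latter already acts transitively on $X$, so does the image, giving the required action. The main obstacle is this last construction: promoting the abstract commensurability $G/W\sim H/Z(H)$ to a genuine continuous homomorphism into $\textnormal{Isom}(X)$ requires invoking the classification of the automorphism group of rank-one simple Lie groups to ensure the relevant finite extensions of $H/Z(H)$ are realised geometrically.
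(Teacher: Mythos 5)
Your cycle of implications and your treatment of the Heintze case essentially reproduce the paper's argument: the paper likewise starts from a $K$-invariant inner product on $\mathfrak{h}$ (justifying that $K$ centralizes the one-parameter group $A$ via Lemma \ref{allco} and a modular-function argument) and then invokes \cite[Theorem~2]{H}. One small caution there: positive definiteness of $v\mapsto\langle Dv,v\rangle$ is only the \emph{hypothesis} of Heintze's theorem; negative curvature is obtained after rescaling the $\mathfrak{a}$-direction by a large factor $\lambda$, so you must also observe that this dilation commutes with $\mathrm{Ad}(K)$ --- which it does, and this is exactly the map $u_\lambda$ in the paper's proof. Where you genuinely diverge is the rank-one case. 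You propose to manufacture a proper continuous homomorphism $G\to\textnormal{Isom}(X)$ with transitive image, and you correctly flag the delicate point (realizing the finite extension of $H/Z(H)$ geometrically) without carrying it out; as written this step is incomplete, though it can be completed by letting $G$ act by conjugation on its identity component modulo $W$ and using $\Aut(H/Z(H))=\textnormal{Isom}(X)$ for rank one. The paper bypasses all of this with a two-line argument worth adopting: take $K$ maximal compact in $G$; the isotropy representation of $K$ on the tangent space of $G/K$ is irreducible, so the $G$-invariant Riemannian metric on $G/K$ (which exists by averaging) is unique up to scalar and is therefore a multiple of the rank-one symmetric metric, hence negatively curved.

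A second, minor inaccuracy occurs in your exclusion of the non-Archimedean Heintze case: from $N$ being discrete and normal you cannot conclude $N\cap G_0=\{1\}$; a discrete normal subgroup of a Lie group meets the identity component in a central but possibly infinite subgroup (e.g.\ $\Z\subset\R$). The exclusion itself is correct and is better argued directly from the contraction: by Lemma \ref{ac} a discrete group with a contracting $\Z$-action has a finite vacuum subgroup $V$, so it is the increasing union of the finite subgroups $\alpha^{-k}(V)$ of bounded order and is therefore finite, contradicting non-compactness of $N$. Your rank-one exclusion is fine. The paper itself passes over this reduction in silence, asserting only that $G$ must be of \emph{Lie} Heintze or rank-one type, so making it explicit is a reasonable addition.
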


The construction of a left-invariant metric on a simply connected Heintze Lie group is one of the principal results in \cite{H}. 
We need slightly more to obtain the third statement of the corollary for an arbitrary Heintze Lie group, but it turns out that what we need follows from Heintze's construction, namely \cite[Theorem~2]{H}. This amounts to prove that if $G$ is a simply connected Heintze group and $K$ a compact group of automorphisms of $G$, then $G$ possesses a left-invariant Riemannian metric which is $K$-invariant.

\begin{CorIntro}\label{c5}
If $G$ is an algebraic group over a
non-Archimedean local field of characteristic 0, we have equivalences:
\begin{enumerate}
\renewcommand{\labelenumi}{(\ref{c5}.\arabic{enumi})}
\item\label{5a} $G$ is [non-elementary] Gromov-hyperbolic;
\item\label{5b} $G$ is quasi-isometric to a regular tree of finite degree [of degree at least three];
\item\label{5c} $G$ acts properly, cocompactly (i.e.~with finitely many orbits) by isometries on some regular tree of finite degree [of degree at least three].
\end{enumerate}
\end{CorIntro}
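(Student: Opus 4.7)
The natural strategy is to set up a cycle (3) $\Rightarrow$ (2) $\Rightarrow$ (1) $\Rightarrow$ (3). The first two implications are standard: a proper cocompact isometric action of $G$ on a locally finite tree $T$ gives a quasi-isometry $G \simeq T$ by the \v{S}varc--Milnor lemma for locally compact groups, and any two locally finite trees with all valences at least three are quasi-isometric to the $3$-regular tree; then quasi-isometry invariance of Gromov-hyperbolicity yields (1). The bracketed non-elementary versions track through correspondingly, since a tree of valence at most two is quasi-isometric to a point or to $\R$.

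For the main implication (1) $\Rightarrow$ (3), I would apply Corollary \ref{c3}: the hypothesis forces $G$ to be of non-Archimedean Heintze or rank-one type, since $G$ being a non-Archimedean algebraic group excludes the Lie options. In the rank-one case, by definition there is a compact normal subgroup $W \triangleleft G$ such that some finite-index subgroup of $G/W$ is isomorphic to $H/Z(H)$ for a simple rank-one algebraic group $H$ over a non-Archimedean local field. Such $H$ acts properly and cocompactly on its Bruhat--Tits tree, which is locally finite; pulling this back through $G \twoheadrightarrow G/W$ and inducing over the finite-index inclusion yields a proper cocompact isometric action of $G$ on a locally finite tree, after which a standard equivariant regularization produces a regular tree.

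The remaining and main obstacle is the Heintze case: $G = S \ltimes N$ with $N$ non-compact totally disconnected nilpotent and a cocompact cyclic subgroup $\langle t \rangle \subset S$ contracting $N$. My plan is to exhibit a compact open subgroup $K \subset N$ that is \emph{tidy} for $t$, meaning $tKt^{-1} \subsetneq K$ and $\bigcap_{n \geq 0} t^n K t^{-n} = \{1\}$; existence comes from the contraction assumption together with the standard base of compact open neighborhoods of $1$ in the totally disconnected group $N$. The inclusion $tKt^{-1} \subsetneq K$ then presents $N \rtimes \langle t \rangle$ as an ascending HNN extension (topological, in the profinite sense) whose Bass--Serre tree $T$ is regular of valence $[K : tKt^{-1}] + 1$, with compact open vertex stabilizers; the action extends to all of $G = S \ltimes N$ by cocompactness of $\langle t \rangle$ in $S$. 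Properness and cocompactness then follow formally, and the valence is at least three because $[K : tKt^{-1}] \geq 2$, which is forced by the exponential growth of $G$ (otherwise $G$ would be a line-type group). The technical heart of the argument is the construction of $K$ and the verification that the ensuing tree action remains proper and cocompact after extending from $N \rtimes \langle t \rangle$ to $G$.
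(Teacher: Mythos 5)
Your route is essentially the paper's: the implications (\ref{c5}.\ref{5c})$\Rightarrow$(\ref{c5}.\ref{5b})$\Rightarrow$(\ref{c5}.\ref{5a}) are dismissed as standard, (\ref{c5}.\ref{5a}) is reduced via Corollary~\ref{c3} to the non-Archimedean Heintze or rank-one alternative, the rank-one case is settled by the Bruhat--Tits tree, and in the Heintze case the tree you obtain as the Bass--Serre tree of an ascending HNN extension over a compact open subgroup $K$ with $tKt^{-1}\subsetneq K$ is exactly the coset graph $G/\Omega$ for the generating set $\Omega\{t,t^{-1}\}\Omega$ constructed in Proposition~\ref{nonArchimedeanHeintze} (the paper checks the tree property directly by a Busemann-function argument instead of invoking Bass--Serre theory, but this is the same object). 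Two caveats. First, your ``tidiness'' requirement $\bigcap_{n\ge 0}t^nKt^{-n}=\{1\}$ is not always achievable and should be dropped: the contraction of Definition~\ref{dco} is not assumed strict, and for $H=F\times\Q_\ell$ with $F$ finite nontrivial, $t$ acting trivially on $F$ and by multiplication by $\ell$ on $\Q_\ell$, any compact open $K$ with $\bigcup_n t^{-n}Kt^n=H$ must contain $F$, hence so does the intersection. Fortunately only $tKt^{-1}\subsetneq K$ and $\bigcup_n t^{-n}Kt^n=H$ are used (the strict inclusion, i.e.\ $[K:tKt^{-1}]\ge 2$, follows simply because equality would force $H=K$ to be compact --- no growth argument is needed). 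Second, the step you defer as ``the technical heart,'' namely extending the action from $N\rtimes\langle t\rangle$ to $G=S\ltimes N$, is precisely where the paper does its work: one must choose $K$ invariant under the compact kernel $W=\Ker(p)$ of the canonical surjection $p:S\to\Z$, which is the content of Lemma~\ref{algc}; without this invariance $W$ need not act on your tree at all, so cocompactness of $\langle t\rangle$ in $S$ alone does not suffice. With these two repairs your argument coincides with the paper's.
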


Note that all regular trees of degree $3\le d<\infty$ are quasi-isometric to each other. In order to get the third condition, we make use of the following general proposition.

\begin{PropIntro}\label{tree}
Consider a contracting action $\sigma$ of $\mathbf{Z}$ on some
non-compact, totally disconnected, locally compact group $H$.  There exists a proper length function $\ell$ on the semidirect product $G=\mathbf{Z}\ltimes H$ such that the pseudo-metric space $(G,d)$, where  $d(g,g')=\ell(g^{-1}g')$, is isometric to the vertex set of a $r$-regular tree for some $r\geq 3$.
 \end{PropIntro}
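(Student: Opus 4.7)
The plan is a Bass--Serre-style construction: I build a $(d+1)$-regular tree $T$ on which $G=\Z\ltimes H$ acts vertex-transitively by isometries, then pull the tree metric back to a length function on $G$. Let $\alpha$ denote the generator of the $\Z$-action in the contracting direction. The preliminary step is to produce a compact open subgroup $K\subset H$ with $\alpha(K)\subsetneq K$ and $\bigcup_{n\geq 0}\alpha^{-n}(K)=H$. This is standard in the theory of contracting automorphisms of totally disconnected locally compact groups: the vacuum subset $\Omega$ is automatically a compact neighbourhood of $1$ (for any compact neighbourhood $V$ of $1$, $\alpha^n(V)\subseteq\Omega$ for $n$ large, so $\Omega$ contains the open set $\alpha^n(V)$ around $1$); combining van Dantzig with a tidying argument then supplies such a $K$, and strictness of the inclusion is forced by the non-compactness of $H$. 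Set $d:=[K:\alpha(K)]\geq 2$ and $K_n:=\alpha^n(K)$, giving a strictly decreasing two-sided chain of compact open subgroups $\cdots\supsetneq K_{-1}\supsetneq K_0\supsetneq K_1\supsetneq\cdots$ with $\bigcup_{n}K_{-n}=H$.

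Next I define $T$ to have vertex set $V:=\bigsqcup_{n\in\Z}H/K_n$ and an edge between $hK_n$ and $h'K_{n+1}$ exactly when $h'K_{n+1}\subseteq hK_n$. Each vertex $hK_n$ has a unique neighbour at level $n-1$ (its \emph{parent} $hK_{n-1}$) and $d$ neighbours at level $n+1$ (the $d$ cosets of $K_{n+1}$ inside $hK_n$), so $T$ is $(d+1)$-regular with $d+1\geq 3$. Connectedness follows because any two vertices $h_1K_{n_1}$ and $h_2K_{n_2}$ admit a common ancestor at every sufficiently negative level (since $h_1^{-1}h_2\in K_m$ for $m\ll 0$, by $\bigcup_m K_{-m}=H$), and acyclicity follows from the unique-parent property (at the highest-level vertex of any putative cycle, both cycle-neighbours would have to be its unique parent). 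Viewing $K$ as $\{0\}\times K\subset G$, the bijection $(n,h)K\leftrightarrow hK_n$ identifies $G/K$ with $V$, and left multiplication by $G$ transports to an action on $V$ that preserves coset containment because $\alpha$ is an automorphism. Thus $G$ acts on $T$ by isometries, and the action is visibly vertex-transitive.

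Finally, let $v_0:=K\in V$ and $\ell(g):=d_T(v_0,g v_0)$. Symmetry and sub-additivity of $\ell$ follow from the isometric action; $\ell$ is locally constant on $K$-cosets (hence continuous) because $K$ fixes $v_0$; and $\ell$ is proper because each ball $\{\ell\leq R\}$ is the finite union of $K$-cosets projecting to the finite $R$-ball of $T$. The identity $d(g,g')=\ell(g^{-1}g')=d_T(gv_0,g'v_0)$ realises $g\mapsto gv_0$ as an isometric surjection of pseudo-metric spaces from $(G,d)$ onto the vertex set of the $(d+1)$-regular tree $T$, giving the conclusion with $r=d+1\geq 3$. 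The only substantive step is the preliminary production of $K$; once this is in hand, the remainder is a routine Bass--Serre-style verification.
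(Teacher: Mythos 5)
Your proof is correct and follows essentially the same route as the paper's (Lemmas \ref{ac} and \ref{algc} together with Proposition \ref{nonArchimedeanHeintze}): both arguments first produce, via van Dantzig and a finite-intersection tidying, a compact open vacuum subgroup $K$ with $\alpha(K)\subsetneq K$, and then realise $G/K$ as the vertex set of the resulting $([K:\alpha(K)]+1)$-regular coset tree with its vertex-transitive isometric $G$-action, the paper presenting the very same tree as the Cayley graph of $G$ with respect to $K\{t,t^{-1}\}K$ modulo the right $K$-action and proving acyclicity by the same unique-parent (Busemann) argument. The only point to watch is that your identification $(n,h)K\leftrightarrow hK_n$ requires the semidirect-product convention in which $(n,h)(0,k)=(n,h\,\alpha^{n}(k))$ (with the other convention $(n,h)K=\{n\}\times hK$ rather than $\{n\}\times hK_n$), but this is a harmless normalisation.
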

 
Turning back to Theorem \ref{main}(2), we have the following more general result.

\begin{ThmIntro}\label{mix}
Let $H$ be a locally compact group whose unit component $H_0$ is not compact, with an action of of a locally compact group $S$, which is contracting in restriction to some cocompact subgroup $\mathbf{Z}=\langle\xi\rangle$ of $S$, and set $G=\mathbf{Z}\ltimes H$. Let $\xi^{-1}$ multiply the Haar measure of $H$ by $\delta>1$, and let $\lambda>1$ be the smallest modulus of eigenvalues greater than one of $\xi^{-1}$ on $H_0$. Set $$p_0=p_0(G)=\log(\delta)/\log(\lambda)\ge 1.$$ Then for all $p\ge 1$,
$$\overline{H_p^1}(G)\neq 0 \textnormal{ if and only if }p>p_0.$$
\end{ThmIntro}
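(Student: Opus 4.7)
The statement is an equivalence and I treat each implication separately, using the identification $Z^1(G,\rho_G^p)\simeq D^p(G)/\R$ throughout.

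For the non-vanishing direction ($p>p_0$), I build an explicit class following Pansu's horocyclic recipe \cite{Pan06}. Let $V\subset\mathfrak{h}_0:=\textnormal{Lie}(H_0)$ be the generalized eigenspace of $\xi^{-1}$ corresponding to eigenvalues of modulus $\lambda$, and pick a linear form on $\mathfrak{h}_0$ whose restriction to $V$ is nontrivial. Extend and cut off to obtain a Lipschitz function $\phi\colon H\to\R$ supported near $1\in H$ with $\phi(1)=0$, and consider the candidate Dirichlet function
\[
f(n,h)\;:=\;\mathbf{1}_{n\ge 0}\sum_{k\ge 0}\phi\bigl(\xi^{n+k}(h)\bigr),
\]
pointwise convergent since $\xi^{n+k}(h)\to 1$ by contraction and $\phi(1)=0$. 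A direct computation gives $(f-\rho(\xi)f)(n,h)=\phi(\xi^n h)\mathbf{1}_{n\ge 0}$ whose $L^p$-norm is $\|\phi\|_p^p\sum_{n\ge 0}\delta^{-n}<\infty$; a parallel but more delicate estimate for $\|b(h_0)\|_p$ using the contraction rate $\lambda^{-1}$ shows $f\in D^p(G)$ for all $p\ge 1$. The threshold $p_0$ enters when checking non-triviality in $\overline{H^1_p}(G)$: one pairs $f$ against a continuous linear functional on $D^p(G)$ extracting the ``$\xi$-boundary trace'' of $f$, which is finite and nonzero precisely when the series $\sum_n(\delta\lambda^{-p})^n$ converges at the relevant end, i.e.\ when $p>p_0$.

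For the vanishing direction ($p\le p_0$), given $f\in D^p(G)$ with cocycle $b$, I would approximate $f$ by elements of $L^p+\R$ in the $D^p$-topology, i.e.\ construct $g_N\in L^p(G)$ so that $g_N-\rho(g)g_N\to b(g)$ in $L^p$ uniformly on compact subsets of $G$. The obvious candidate $g_N=\sum_{k=0}^{N-1}\rho(\xi^k)b(\xi)\in L^p(G)$ satisfies $g_N-\rho(\xi)g_N=b(\xi)-\rho(\xi^N)b(\xi)$, where the residual $\rho(\xi^N)b(\xi)$ does not tend to zero in norm ($\rho$ being isometric on $L^p$). The correct fix is to take a Ces\`aro average, or more generally a convex combination of the $g_N$ (Mazur's lemma turning weak into strong convergence), so that the residuals are flattened out. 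The quantitative estimate needed for the Ces\`aro average to converge in $L^p$ reduces, after the same change of variables as before, to summability of $(\delta\lambda^{-p})^n$ at the complementary end of the series---exactly secured by $p\le p_0$. The borderline case $p=p_0$ is handled via a logarithmic refinement.

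\textbf{Expected main obstacle.} The hardest step is the vanishing direction at the critical exponent $p=p_0$, where the geometric series is borderline divergent and the naive approximation just barely fails; one must quantify the weak-to-strong passage carefully. A secondary subtlety is that $\lambda$ is defined through the Lie-algebra action on $\mathfrak{h}_0$ only, whereas $\delta$ aggregates expansion contributions from all of $H$, including the totally-disconnected quotient $H/H_0$. I would therefore first dispatch the pure Heintze case $H=H_0$ simply connected nilpotent (essentially Pansu), and then bootstrap to general $H$ by exploiting that $H/H_0$ adds to $\delta$ but introduces no new slow eigendirection, so the threshold $p_0$ is altered only through its $\delta$-dependence and the argument runs in parallel.
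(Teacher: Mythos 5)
Your vanishing direction ($p\le p_0$) misses the actual content of the proof. The premise that $\rho(\xi^N)b(\xi)$ does not tend to zero because ``$\rho$ is isometric on $L^p$'' is false here: $G$ is non-unimodular (conjugation by $\xi$ scales Haar measure on $H$), so $\|\rho(\xi^N)f\|_p=\Delta(\xi)^{-N/p}\|f\|_p\to 0$ and the telescoping series converges outright, with no need for Ces\`aro averages or Mazur's lemma. But this convergence is exactly Lemma \ref{lem_xinv}, and it only shows that every class has a $\rho(\xi)$-invariant representative $u_\infty$ --- a statement valid for \emph{every} $p\ge 1$, hence carrying no information about the threshold. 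The real work is to show that for $p\le p_0$ this invariant representative is constant. The paper does this by restricting the Dirichlet condition to a one-parameter subgroup $\gamma$ of $N$ contracted at the slowest rate $\lambda$: $\xi$-invariance turns the condition into $\sum_n\delta^n\|b(\gamma(\lambda^{-n}))\|_p^p<\infty$, while the cocycle identity along $\gamma$ (where $\rho$ \emph{is} isometric) gives $\|b(\gamma(1))\|\le\lambda^n\|b(\gamma(\lambda^{-n}))\|+e_n$ with $e_n\to 0$; if $b(\gamma(1))\neq 0$ the sum dominates a multiple of $\sum_n(\delta/\lambda^p)^n$, which diverges for all $p\le p_0$ (there is no borderline difficulty at $p=p_0$, since the ratio is then exactly $1$, so no logarithmic refinement is needed), and if $b(\gamma(1))=0$ one concludes via Lemmas \ref{lem:centre} and \ref{lem:inv} that $u$ is constant. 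None of this mechanism appears in your sketch; you also never control $g_N-\rho(h)g_N$ for $h\in H$, which is where all the difficulty lies.

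The non-vanishing direction has the threshold entering in the wrong place, and a convergence error. With the stated normalization ($\xi$ contracts $H$, so $\xi^{-n}$ expands Haar measure by $\delta^n$), one gets $\int_H|\phi(\xi^{n}(h))|^p\,dh=\delta^{n}\|\phi\|_p^p$, so your series is $\|\phi\|_p^p\sum_{n\ge 0}\delta^{+n}=\infty$, not $\sum\delta^{-n}$. More structurally, the claim that $f\in D^p(G)$ for all $p\ge 1$ cannot be right: the condition $p>p_0$ is precisely what makes the Dirichlet sum $\sum_d\Delta(d)\|b(g\cdot d^{-1})\|_p^p\asymp\sum_d(\delta\lambda^{-p})^d$ converge, and that is where the threshold must appear. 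The paper instead uses the $\rho(\Z)$-invariant extension $u(d,n)=v(n\cdot d^{-1})$ of a fixed compactly supported Lipschitz $v$ on $H$; nontriviality is then free --- since $G$ is non-unimodular, $L^p(G)+\R$ is closed in $D^p(G)$, and $u\notin L^p(G)+\R$ because its level sets have infinite measure --- so no ``boundary trace'' functional is needed. Finally, the reduction from general $H$ to $N\times R$ with $N$ a connected Lie group and $R$ totally disconnected is not automatic: it rests on a structure theorem for groups with a contracting automorphism (the appendix), followed by a quasi-isometry reduction to $N$ simply connected nilpotent with positive real eigenvalues; your proposed ``bootstrap from the pure Heintze case'' would need these ingredients supplied.
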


Theorem \ref{mix} is proved by Pansu \cite{Pan90,Pan06} when $H$ is a simply connected solvable Lie group. Theorem \ref{mix} applies to ``mixed" groups such as $\mathbf{Z}\ltimes_{(t^{-1},\ell)}(\mathbf{R}\times\mathbf{Q}_\ell)$ ($t>1$, $\ell$ prime), for which $p_0=\log(t\ell)/\log(t)$. Unlike the case of Lie groups, this provides examples where $p_0$ is (any number) in $]1,2[$. Besides, the assumption in Theorem \ref{mix} of $H_0$ to be non-compact is no restriction, since otherwise when $H_0$ is compact (but not $H$), Proposition \ref{tree} applies and $\overline{H_p^1}(G)\neq 0$ for all $p\ge 1$ (see Proposition \ref{ct} for a direct proof).

\setcounter{tocdepth}{1}
\tableofcontents

In Section \ref{ModularFunctionSection}, we collect some results relating some properties of $L^p$-cocycles to the modular function of the group. These observations, crucial in the non-unimodular case, are largely adapted from \cite{Pan06}. At the additional cost of some structural results on Heintze Lie groups, we then obtain Theorem~\ref{main}(3).

In Section \ref{MainThmSection}, we prove Theorem \ref{mix} and in particular deduce (2) of Theorem \ref{main}.

In Section \ref{hyp}, we complete the proof of the corollaries of Theorem \ref{main}. In particular, we prove the existence of a $G$-invariant metric of negative curvature on $G/K$ for general Heintze Lie groups, and prove Proposition \ref{tree}.

Finally, in Appendix \ref{a1}, we give a general structure theorem for locally compact groups with a contraction, and in Appendix \ref{qii}, we prove the quasi-isometric invariance of $L^p$-cohomology in degree one, an particular case of an unpublished result of Pansu~\cite{Pan95}.

\section{Vanishing of the first $L^p$-cohomology and the modular function}\label{ModularFunctionSection}

In all this section, $p\in[1,\infty)$. The aim of this section is to prove Theorem~\ref{main}(3).

\subsection{Generalities}

Recall that $\rho$ denotes the {\it right} regular representation of $G$ on $L^p(G)$. Let $\Delta$ be the modular function on $G$. For every measurable
function $f$ on $G$ and $g\in G$, we have
$\|\rho(g)f\|=\Delta(g)^{-1/p}\|f\|$, where $\|\cdot\|$ is always
assumed to denote the $L^p$-norm. For every $\xi\in G$, define
$$W_\xi=\{h\in G|(\xi^{-n}h\xi^n)_{n\ge 0}\text{ is bounded}\}.$$
This is a subgroup of $G$.

\begin{lem}
Fix $\xi\in G$ such that $\Delta(\xi)>1$ (such $\xi$ exists if and
only
if $G$ is non-unimodular). Suppose that $u\in D^p(G)$. Then there exists
$u_\infty\in D^p(G)$ such that $u-u_\infty\in L^p(G)$ and
$\rho(\xi)u_\infty=u_\infty$.\label{lem_xinv}\end{lem}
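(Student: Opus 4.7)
The plan is to exploit that $\rho(\xi)$ is a strict contraction on $L^p(G)$: by hypothesis $\Delta(\xi)>1$, and the formula $\|\rho(g)f\|_p=\Delta(g)^{-1/p}\|f\|_p$ recalled at the beginning of the section gives operator norm $\Delta(\xi)^{-1/p}<1$ on $L^p(G)$. This should let me solve the fixed-point equation $\rho(\xi)u_\infty=u_\infty$ modulo $L^p$ by a telescoping/geometric-series trick, i.e.\ by integrating the cocycle along the $\xi$-orbit.

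Concretely, I would set $b:=u-\rho(\xi)u$, which lies in $L^p(G)$ by the definition of $D^p(G)$. Iterating the identity $u=\rho(\xi)u+b$ yields, for every $N\ge 1$,
$$u=\rho(\xi)^{N}u+\sum_{n=0}^{N-1}\rho(\xi)^{n}b.$$
Since $\|\rho(\xi)^{n}b\|_p=\Delta(\xi)^{-n/p}\|b\|_p$, the series $v:=\sum_{n\ge 0}\rho(\xi)^{n}b$ converges absolutely in $L^p(G)$. I then define $u_\infty:=u-v$. By construction $u-u_\infty=v\in L^p(G)$, and $u_\infty\in D^p(G)$ because $D^p(G)$ is a vector space and it contains $L^p(G)$.

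It remains to verify $\rho(\xi)u_\infty=u_\infty$. Applying the bounded operator $\rho(\xi)$ termwise to the $L^p$-convergent sum (legal because $\rho(\xi)$ is continuous on $L^p(G)$) yields $\rho(\xi)v=\sum_{n\ge 1}\rho(\xi)^{n}b=v-b$, whence
$$\rho(\xi)u_\infty=\rho(\xi)u-\rho(\xi)v=(u-b)-(v-b)=u-v=u_\infty,$$
as desired. The argument is essentially a telescoping sum, and I do not foresee any substantial obstacle; the only points requiring a moment of care are the absolute summability in $L^p(G)$ and the commutation of $\rho(\xi)$ with the infinite sum, both of which are routine once the contraction estimate $\Delta(\xi)^{-1/p}<1$ is in hand.
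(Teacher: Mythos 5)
Your proof is correct and is essentially the paper's argument: the authors also use the geometric decay $\|\rho(\xi^{n+1})u-\rho(\xi^n)u\|=\Delta(\xi)^{-n/p}\|\rho(\xi)u-u\|$ to sum the telescoping series in $L^p(G)$ and set $u_\infty=\lim\rho(\xi^n)u$, which is exactly your $u-v$. The only cosmetic difference is that you verify $\rho(\xi)u_\infty=u_\infty$ by applying $\rho(\xi)$ termwise to the convergent series, while the paper deduces it from almost-everywhere convergence of $\rho(\xi^n)u$; both are fine.
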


\begin{proof} We have
$$\|\rho(\xi^{n+1})
u-\rho(\xi^n) u\|=\Delta(\xi)^{-n/p}\|\rho(\xi) u-u\|.$$

Therefore the sequence $(\rho(\xi^n) u-u)$ converges in $L^p(G)$
to some function
$v$; we set $u_\infty=v+u$. In particular, the sequence
$(\rho(\xi^n) u)$ converges almost surely to $u_\infty$. In
particular, $\rho(\xi)u_\infty=u_\infty$ almost everywhere.
Moreover, $u-u_\infty\in L^p(G)$, so that $u_\infty\in D^p(G)$ and
defines the same class as $u$ in $H_p^1(G)$. \end{proof}

\begin{lem}
Let $G$ be any locally compact group, and suppose that $u\in
D^p(G)$ satisfies $\rho(H)u=u$ for some non-compact closed
subgroup $H$ of $G$. Let $V$ be the centralizer of $H$ in $G$.
Then $\rho(V)u=u$.\label{lem:centre}\end{lem}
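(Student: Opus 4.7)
The plan is to fix an arbitrary $v\in V$ and show that $f:=u-\rho(v)u$, which lies in $L^p(G)$ by the definition of $D^p(G)$, is the zero function. Since $v$ commutes with every $h\in H$ and $\rho(H)u=u$, one computes
$$\rho(h)f \;=\; \rho(h)u-\rho(hv)u \;=\; u-\rho(vh)u \;=\; u-\rho(v)\rho(h)u \;=\; f$$
for all $h\in H$. Thus $f$ is a $\rho(H)$-fixed vector in $L^p(G)$, and the lemma reduces to establishing that $L^p(G)^{\rho(H)}=\{0\}$ whenever $H$ is a closed non-compact subgroup of $G$.

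To prove this vanishing, I split according to the restriction of the modular function to $H$. If there exists $h_0\in H$ with $\Delta(h_0)\ne 1$, then the identity $\|\rho(h_0)f\|_p=\Delta(h_0)^{-1/p}\|f\|_p$ recalled at the start of the section, combined with $\rho(h_0)f=f$, immediately forces $\|f\|_p=0$. The substantial case is $\Delta|_H\equiv 1$, for which $\rho|_H$ acts isometrically on $L^p(G)$ and one has to exploit non-compactness of $H$ in a more geometric way.

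In that remaining case I would use a disjoint right-translates argument. Pick any compact $B\subset G$ with $\mu(B)>0$. Because $H$ is non-compact while $B^{-1}B$ is compact, one can greedily construct an infinite sequence $(h_i)_{i\ge 1}$ in $H$ with the sets $Bh_i$ pairwise disjoint: at each step the forbidden set $\bigcup_{j<i}h_jB^{-1}B$ is compact, so admits some $h_i\in H$ outside it. Since $\Delta(h_i)=1$, the change-of-variable identity $\mu(Ah)=\Delta(h)\mu(A)$ together with right $H$-invariance of $|f|^p$ gives $\int_{Bh_i}|f|^p\,dx=\int_B|f(yh_i)|^p\,dy=\int_B|f|^p\,dx$, so summing over $i$ using disjointness yields
$$\|f\|_p^p \;\ge\; \sum_i \int_{Bh_i}|f|^p\,dx \;=\; \sum_i \int_B|f|^p\,dx,$$
which forces $\int_B|f|^p\,dx=0$ for every compact $B$, hence $f=0$.

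The expected main obstacle is precisely the unimodular-on-$H$ case: with the modular function silent, one must invoke closedness and non-compactness of $H$ together with compactness of $B^{-1}B$ to extract infinitely many disjoint translates. The rest of the argument is formal, using only the Dirichlet membership $b(v)\in L^p(G)$, the commutation $vh=hv$ for $h\in H$, and the standard action of $\Delta$ on the $L^p$-norm.
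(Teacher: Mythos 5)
Your proof is correct, and the engine is the same one the paper uses --- pairwise disjoint right translates $Bh_i$, $h_i\in H$, together with the identity $\|\rho(h)f\|_p=\Delta(h)^{-1/p}\|f\|_p$ --- but you route it differently. You first check that $f=u-\rho(v)u$ is itself a $\rho(H)$-fixed vector of $L^p(G)$ and then prove that $L^p(G)$ has no nonzero vector fixed under right translation by a closed non-compact subgroup; that vanishing statement is precisely the first bullet of Lemma~\ref{lem:inv}, so your argument in effect deduces Lemma~\ref{lem:centre} from Lemma~\ref{lem:inv}. The paper's proof is instead self-contained and local: it computes $\|u-\rho(v)u\|_X=\Delta(h_i)^{-1/p}\,\|u-\rho(v)u\|_{Xh_i}$ for compact $X$ and lets $i\to\infty$ along a sequence in $H$ on which the factor $\Delta(h_i)^{-1/p}$ is controlled, the disjointness of the $Xh_i$ forcing $\|u-\rho(v)u\|_{Xh_i}\to0$. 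Your dichotomy on $\Delta|_H$ is a clean way of handling that factor: a single element of $H$ with $\Delta\neq 1$ kills $f$ outright, and in the unimodular case the factor is identically $1$, so nothing needs to be traded off against the decay of $\|f\|_{Bh_i}$. One cosmetic slip: since $Bh_i\cap Bh_j\neq\emptyset$ exactly when $h_i\in B^{-1}Bh_j$, the forbidden set in your greedy construction should be $\bigcup_{j<i}B^{-1}Bh_j$ rather than $\bigcup_{j<i}h_jB^{-1}B$; both are compact, so the argument is unaffected.
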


\begin{proof} For any measurable function $f$ on $G$ and any measurable
subset $X$ of $G$, denote by $\|f\|_X$ the $L^p$-norm of $f.1_X$.

Fix any compact subset $X$ of $G$. As $H$ is non-compact, its
subset $H_{+}=H\cap\{\Delta\le 1\}$ is non-compact. As the right
action of $G$ on itself is proper, there exists a sequence $(h_i)$
in $H_+$ such that the subsets $Bh_i$ are pairwise disjoint. Fix
$g\in V$. We have

$$\|u-\rho(g) u\|_X=\|\rho(h_i)u-\rho(g)\rho(h_i)
u\|_X\quad\text{(by
$H$-invariance of $u$)}$$
$$=\|\rho(h_i)u-\rho(h_i)\rho(g) u\|_X\quad\text{(as $u$ and $h_i$
commute)}$$
$$=\Delta(h_i)^{-1/p}\|u-\rho(g) u\|_{Xh_i},$$
which tends to 0 as $i\to\infty$ as $u-\rho(g) u$ is $L^p$.
Therefore, $\|u-\rho(g) u\|_X=0$ for every compact subset
$X\subset G$, so that $\|u-\rho(g) u\|=0$, i.e.~$u=\rho(g) u$
almost everywhere. \end{proof}

Fix $\xi\in G$ and $u\in D^p(G)$ satisfying
$\rho(\xi)u=u$. For $g\in G$ write $b(g)=u-\rho(g)u$.

\begin{lem}If $g\in G$ then $$\Delta(\xi)^{1/p}\|b(g)\|=\|b(\xi^{-1}g\xi)\|.$$\label{lem:conj}
\end{lem}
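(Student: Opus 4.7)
The plan is a direct computation using three ingredients: the fact that $\rho$ is a genuine representation of $G$ (so conjugation passes through), the hypothesis $\rho(\xi)u=u$, and the scaling formula $\|\rho(g)f\|=\Delta(g)^{-1/p}\|f\|$ recalled at the start of the section.

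First I would expand $b(\xi^{-1}g\xi) = u - \rho(\xi^{-1}g\xi)u$ using $\rho(\xi^{-1}g\xi)=\rho(\xi^{-1})\rho(g)\rho(\xi)$. Since $\rho(\xi)u=u$, this gives $\rho(\xi^{-1}g\xi)u = \rho(\xi^{-1})\rho(g)u$. Next, using $\rho(\xi)u=u$ once more yields $u=\rho(\xi^{-1})u$, so
\[
b(\xi^{-1}g\xi) \;=\; \rho(\xi^{-1})u - \rho(\xi^{-1})\rho(g)u \;=\; \rho(\xi^{-1})\bigl(u-\rho(g)u\bigr) \;=\; \rho(\xi^{-1})b(g).
\]
Finally I would take $L^p$-norms and invoke $\|\rho(\xi^{-1})f\|=\Delta(\xi^{-1})^{-1/p}\|f\|=\Delta(\xi)^{1/p}\|f\|$ to conclude $\|b(\xi^{-1}g\xi)\|=\Delta(\xi)^{1/p}\|b(g)\|$.

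There is no real obstacle here; the only thing to be careful about is the direction in which $\rho$ composes (the convention $(\rho(g)f)(h)=f(hg)$ makes $\rho$ a homomorphism, not an anti-homomorphism, so the conjugation identity used in the first step is correct as written) and the sign of the exponent in the modular-function scaling. Everything else is formal manipulation with $b$ and the invariance of $u$.
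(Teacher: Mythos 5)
Your proof is correct and is essentially the paper's proof: the identity $b(\xi^{-1}g\xi)=\rho(\xi^{-1})b(g)$ you derive is exactly the formula $u-\rho(g)u=\rho(\xi)\bigl(u-\rho(\xi^{-1}g\xi)u\bigr)$ that the paper cites, just written with $\rho(\xi^{-1})$ applied to the other side. The remarks about $\rho$ being a homomorphism (not an anti-homomorphism) under this convention and about the sign in the modular scaling are both accurate.
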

\begin{proof} This follows from the formula
$$u-\rho(g)u=\rho(\xi)(u-\rho(\xi^{-1}g\xi)u).\qedhere$$
\end{proof}

\begin{lem} Suppose that $\Delta(\xi)\ge 1$. If $u\in D^p(G)$ and $\rho(\xi)u=u$, then $u$ is invariant by $\rho(W_\xi)$.\label{lem:contract}
\end{lem}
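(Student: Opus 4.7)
The plan is to show that $b(h) := u - \rho(h) u = 0$ in $L^p(G)$ for every $h \in W_\xi$. Since $u \in D^p(G)$, the map $g \mapsto b(g)$ from $G$ to $L^p(G)$ is continuous. By definition of $W_\xi$, the conjugates $h_n := \xi^{-n} h \xi^n$ (for $n \geq 0$) form a bounded sequence in $G$, hence lie in some compact subset $K$. Continuity of $b$ then yields a uniform bound $\sup_{n \geq 0} \|b(h_n)\| \leq \sup_{g \in K} \|b(g)\| =: C < \infty$.

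Next, iterating Lemma \ref{lem:conj} applied successively to $g = h,\, \xi^{-1}h\xi,\, \xi^{-2}h\xi^2, \dots$ produces
\[
\|b(h_n)\| \;=\; \Delta(\xi)^{n/p}\,\|b(h)\| \qquad \text{for every } n \geq 0.
\]
Combining with the preceding bound gives $\Delta(\xi)^{n/p}\,\|b(h)\| \leq C$ for all $n$. When $\Delta(\xi) > 1$, the left-hand side grows unboundedly in $n$, which forces $\|b(h)\| = 0$, i.e.\ $\rho(h) u = u$ almost everywhere, as required.

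The main obstacle is the boundary case $\Delta(\xi) = 1$, in which the iteration merely recovers the trivial identity $\|b(h_n)\| = \|b(h)\|$ and yields no new information. To handle it, I would refine Lemma \ref{lem:conj} to the operator form $b(h_n) = \rho(\xi^{-n})\,b(h)$ (an easy consequence of $\rho(\xi) u = u$ and the cocycle identity $b(g_1 g_2) = b(g_1) + \rho(g_1) b(g_2)$), extract a subsequence $h_{n_k} \to h_\infty$ from the compactness of $\overline{\{h_n\}}$, and then use continuity of $b$ to conclude $\rho(\xi^{-n_k}) b(h) \to b(h_\infty)$ strongly in $L^p$. For $p > 1$, right translation of any fixed $L^p$-function by a sequence escaping every compact subset of $G$ tends weakly to zero, and coupling this with Lemma \ref{lem:centre} applied to the closed subgroup $\overline{\langle \xi \rangle}$ should then pin down $b(h) = 0$ even in this borderline regime. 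If instead $\overline{\langle \xi \rangle}$ is compact then $W_\xi$ collapses to a much smaller object and one can reduce directly to the invariance supplied by Lemma \ref{lem:centre}.
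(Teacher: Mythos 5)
Your main argument is exactly the paper's: iterate Lemma \ref{lem:conj} to get $\|b(\xi^{-n}h\xi^{n})\|=\Delta(\xi)^{n/p}\|b(h)\|$, observe that the left-hand side stays bounded because $h\in W_\xi$ makes $(\xi^{-n}h\xi^{n})_{n\ge 0}$ relatively compact and $b$ is continuous (hence bounded on compact sets), and conclude $b(h)=0$ whenever $\Delta(\xi)>1$. That part is correct and complete, and it is all the paper itself proves.

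Concerning the boundary case $\Delta(\xi)=1$ that you single out: you are right that the iteration then gives nothing, but the paper's own one-line proof has the same limitation, and the lemma is only ever invoked (in Proposition \ref{lem:nonHeintze}) for a $\xi$ with $\Delta(\xi)>1$; the ``$\ge$'' is best read as ``$>$''. In fact the statement is literally false if $\Delta(\xi)=1$ is allowed: for $\xi=1$ one has $W_\xi=G$ and $\rho(\xi)u=u$ automatically, yet a non-compact unimodular group such as $\R$ carries non-constant $p$-Dirichlet functions. Consequently your proposed fallback for compact $\overline{\langle\xi\rangle}$ --- ``reduce to the invariance supplied by Lemma \ref{lem:centre}'' --- cannot work: that lemma requires a non-compact subgroup, and there is nothing true to prove in that regime. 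Your weak-convergence argument for the remaining subcase ($\Delta(\xi)=1$, $\overline{\langle\xi\rangle}$ non-compact, $p>1$) is essentially sound: along a subsequence $\rho(\xi^{-n})b(h)=b(\xi^{-n}h\xi^{n})$ converges strongly to some $b(h_\infty)$ and weakly to $0$, and since each $\rho(\xi^{-n})$ is then an isometry this forces $\|b(h)\|=\|b(h_\infty)\|=0$; but it excludes $p=1$ (where translates do not go weakly to zero in $L^1$) and is not needed anywhere in the paper.
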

\begin{proof} Using that $b$ is bounded
on bounded subsets of $G$, this follows
from Lemma \ref{lem:conj}. \end{proof}

\begin{lem}
Let $G$ be a locally compact group.\begin{itemize}

\item If $f\in L^p(G)$ ($p<\infty$), then $f$ cannot be left
or right invariant under a non-compact closed subgroup $H$ unless $f=0$.
\item If $f\in D^p(G)$ is invariant under a non-compact closed normal subgroup $N$, then $f$ is constant.\end{itemize}\label{lem:inv}
\end{lem}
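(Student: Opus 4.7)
The plan is to establish the first bullet by a direct ``spreading'' argument on $L^p$, and then bootstrap to the second bullet using normality.

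For the first bullet, I treat the right-invariant case (the left case is analogous and in fact easier). Assume $f \in L^p(G)$ is nonzero with $\rho(h)f = f$ for all $h \in H$. Combined with the standard identity $\|\rho(h)f\|_p = \Delta(h)^{-1/p}\|f\|_p$, this forces $\Delta|_H \equiv 1$. Next, since the right-translation action of $G$ on itself is proper and $H$ is non-compact and closed, I extract inductively a sequence $(h_n) \subset H$ with the sets $Kh_n$ pairwise disjoint, for any fixed compact $K \subset G$: having chosen $h_1,\dots,h_{n-1}$, the set $\bigcup_{m<n} K^{-1}K h_m$ is compact and so cannot contain all of $H$, giving some admissible $h_n$. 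Using $\Delta|_H = 1$ and right-$H$-invariance of $f$, one computes
\[
\|f\|_p^p \;\geq\; \sum_n \int_{Kh_n}|f|^p\,d\mu \;=\; \sum_n \int_K |f|^p\,d\mu,
\]
which is infinite unless $\int_K|f|^p\,d\mu = 0$. Since $K$ is arbitrary, $f=0$. In the left-invariant case, left-Haar-invariance removes any modular bookkeeping and the same disjointification (with $h_n K$ in place of $K h_n$) finishes the argument.

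For the second bullet, suppose $f \in D^p(G)$ is right-invariant under a non-compact closed normal subgroup $N$. Fix $g \in G$ and set $b(g) = f - \rho(g)f$, which lies in $L^p(G)$ by definition of $D^p$. For each $n \in N$, normality gives $ng = gn'$ with $n' \in N$, so
\[
b(g)(xn) \;=\; f(xn) - f(xng) \;=\; f(x) - f(xgn') \;=\; f(x) - f(xg) \;=\; b(g)(x),
\]
using right-$N$-invariance of $f$ twice. Hence $b(g) \in L^p(G)$ is itself right-$N$-invariant, and the first bullet forces $b(g) = 0$. Since this holds for every $g \in G$, $f$ is $\rho(G)$-invariant, and thus almost everywhere equal to a constant.

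The only potentially delicate point is the construction of the disjoint sequence $(h_n)$, but this is a routine consequence of properness together with non-compactness of $H$, with no structural hypothesis on $G$ needed. The identification of $\Delta|_H \equiv 1$ is the substantive ingredient of the first bullet, and the normality computation is the only new input in the reduction from the second bullet to the first.
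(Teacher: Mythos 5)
Your proof is correct and follows essentially the same route as the paper: both bullets rest on producing disjoint translates of a set carrying positive $L^p$-mass (using properness of the translation action and non-compactness of $H$), and the second bullet is reduced to the first by observing, via normality, that $b(g)=f-\rho(g)f$ inherits the $N$-invariance. The only cosmetic difference is that the paper works with a superlevel set of finite measure and just two disjoint translates together with the one-sided normalization $\Delta(g)\ge 1$, whereas you first deduce $\Delta|_H\equiv 1$ and then sum over infinitely many disjoint translates; both variants are sound.
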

\begin{proof}~
\begin{itemize}\item Otherwise, there exists $\varepsilon>0$ such that $W=\{|f|>\varepsilon\}$ has nonzero finite measure $m$.
Take a compact subset $K$ such that the measure of $K\cap W$ has
measure $>m/2$. There exists $g$ such that $K$ and $gK$ (resp.
$Kg$)are disjoint; we can suppose that $\Delta(g)\ge 1$. Then as
$gK$ (resp. $Kg$) are contained in $W$, we get a contradiction.
\item Then $b(g)=f-\rho(g)f$ is (left-)invariant by $N$, so is
zero, i.e.~$f$ is constant.\qedhere
\end{itemize}
\end{proof}

\subsection{A criterion for vanishing of $L^p$-cohomology}

\begin{prop}
Suppose that $G$ contains
\begin{itemize}
\item An element $\xi$ satisfying $\Delta(\xi)>1$;
\item Two non-compact closed subgroups $Z$ and $Y$, with $Z$ normal in $G$
\end{itemize}
and assume that $Y\subset W_\xi$ and $Y$ centralizes $Z$. Then
$H_p^1(G)=0$ for all $p\ge 1$.\label{lem:nonHeintze}
\end{prop}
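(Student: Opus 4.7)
The plan is to show that an arbitrary Dirichlet function $u \in D^p(G)$ decomposes as a constant plus an $L^p$ function, so that its class in $H_p^1(G)$ is zero. The four previous lemmas were tailored exactly to execute this reduction, and the task is to chain them in the correct order.

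First, I would exploit non-unimodularity. Given $u \in D^p(G)$, since $\Delta(\xi) > 1$, Lemma \ref{lem_xinv} provides $u_\infty \in D^p(G)$ with $u - u_\infty \in L^p(G)$ and $\rho(\xi) u_\infty = u_\infty$. Since $u$ and $u_\infty$ represent the same class in $H_p^1(G)$, it suffices to prove $u_\infty$ is constant.

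Next I would spread the $\xi$-invariance to $Y$, then to $Z$. Because $\Delta(\xi) \ge 1$ and $\rho(\xi) u_\infty = u_\infty$, Lemma \ref{lem:contract} gives $\rho(W_\xi) u_\infty = u_\infty$; in particular $\rho(Y) u_\infty = u_\infty$ since $Y \subset W_\xi$. Now $Y$ is non-compact closed, so Lemma \ref{lem:centre} says $u_\infty$ is invariant under the centralizer $V = C_G(Y)$. The hypothesis that $Y$ centralizes $Z$ means precisely that $Z \subset V$, hence $\rho(Z) u_\infty = u_\infty$.

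Finally, $Z$ is non-compact closed and normal in $G$, so the second bullet of Lemma \ref{lem:inv} forces $u_\infty$ to be constant. Therefore $u \in \R + L^p(G)$, proving $H_p^1(G) = 0$ for every $p \ge 1$.

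I expect no serious obstacle: every hypothesis in the proposition matches one hypothesis of one of the preceding lemmas, and the proof is a bookkeeping exercise in applying them in the right order. The only minor point to watch is that Lemma \ref{lem:centre} requires $Y$ to be closed and non-compact (which is assumed) and that "$Y$ centralizes $Z$" is symmetric, so $Z \subset C_G(Y)$ as needed; everything else is automatic.
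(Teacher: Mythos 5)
Your proof is correct and follows exactly the same chain as the paper's own argument: reduce to a $\rho(\xi)$-invariant representative via Lemma \ref{lem_xinv}, propagate invariance to $Y\subset W_\xi$ by Lemma \ref{lem:contract}, then to $Z$ via the centralizer statement of Lemma \ref{lem:centre}, and conclude constancy from the normality of $Z$ by Lemma \ref{lem:inv}. Nothing to add.
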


\begin{proof} Take $u\in D^p(G)$ and let us show that it is in the cohomology class of
0. By Lemma \ref{lem_xinv} we can suppose that $\rho(\xi)u=u$. By
Lemma \ref{lem:contract}, $u$ is then invariant by $\rho(Y)$. By
Lemma \ref{lem:centre}, $u$ is invariant by $Z$, and therefore by
Lemma \ref{lem:inv} $u$ is constant.\end{proof}

\begin{rem}
In general, it is not true that if $H$ is a closed non-compact subgroup contained in $\textnormal{Ker}\Delta$ and if $u\in D^p(G)$ is $\rho(H)$-invariant, then $u$ is necessarily zero in $H_p^1(G)=0$. It is shown in \cite[Proposition 4.3]{CTV} that if $H$ is any infinite discrete group and $K$ any non-trivial discrete group, then the free product $G=H\ast K$ is a counterexample.
\end{rem}

\subsection{Application to non-unimodular amenable Lie or $p$-adic groups}

We say that a connected Lie group, resp.~connected algebraic group over a local field is triangulable if it embeds as a closed subgroup of upper triangular real matrices (resp.~ over the ground field).

\begin{prop}\label{MainLem}
Let $G$ be
\begin{itemize}
\item either a non-unimodular triangulable Lie group,
\item or a non-unimodular amenable connected linear algebraic group over a non-Archimedean local field of characteristic 0.
\end{itemize}
Then $G$ satisfies the hypotheses of Proposition \ref{lem:nonHeintze} if
(and only if) $G$ is not of Heintze type (Lie or non-Archimedean).
\end{prop}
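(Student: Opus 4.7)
The plan is to verify the hypotheses of Proposition~\ref{lem:nonHeintze} whenever $G$ is not of Heintze type; the converse is immediate, since Theorem~\ref{mix} gives $\overline{H^1_p}(G)\neq 0$ for $p$ large in the Heintze case, ruling out the conclusion of Proposition~\ref{lem:nonHeintze}. The core is a uniform weight-theoretic analysis: write $G=A\ltimes N$, with $N$ the nilradical (Lie case) or the unipotent radical (non-Archimedean case), and $A$ an abelian complement (a compact kernel is allowed in the non-Archimedean case; amenability is used here to force the reductive quotient to be a torus). Then $\mathfrak{n}$ decomposes into joint weight spaces $\mathfrak{n}_{\alpha_1},\dots,\mathfrak{n}_{\alpha_k}$, each $\alpha_i$ a real linear form on (the non-compact part of) $A$, obtained by applying $\log|\cdot|$ to the characters in the non-Archimedean case, with multiplicity $m_i=\dim\mathfrak{n}_{\alpha_i}$. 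The log of the modular function is $\delta=-\sum m_i\alpha_i$, which is nonzero by non-unimodularity, and $G$ is of Heintze type exactly when the non-compact part of $A$ has $\R$-rank one and all $\alpha_i$ are strictly negative on a suitable generator.

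I will set $Z=Z(N)$: it is characteristic in $N$, hence normal in $G$, and non-compact since $N$ is a nontrivial nilpotent group. Pick $\xi\in A$ with $\Delta(\xi)>1$. If some weight satisfies $\alpha_i(\xi)\ge 0$, take $Y=\exp\bigl(\bigoplus_{\alpha(\xi)\ge 0}\mathfrak{n}_\alpha\bigr)$, which is a nontrivial closed subgroup of $N$ (the direct sum is a Lie subalgebra, since the condition $\alpha(\xi)\ge 0$ is stable under addition of weights). Conjugation by $\xi^{-n}$ acts on $\mathfrak{n}_\alpha$ as multiplication by $e^{-n\alpha(\xi)}$, which stays bounded for $n\ge 0$, so $Y\subset W_\xi$; and $Y\subset N$ centralizes $Z\subset Z(N)$. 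This handles in particular the $\R$-rank one case: non-Heintze there means the weights have mixed signs or include a zero weight, so after replacing $\xi$ by $\xi^{-1}$ if necessary, some $\alpha_i(\xi)\ge 0$.

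The remaining case is that the $\R$-rank of $A$ is at least two and every $\xi$ with $\Delta(\xi)>1$ contracts $N$ strictly, i.e.\ all $\alpha_i<0$ on the open half-space $\{\delta>0\}$. A short continuity argument (each $\alpha_i\le 0$ on $\{\delta\ge 0\}$, then applying this to $\pm\xi$ with $\delta(\xi)=0$ forces $\alpha_i|_{\Ker\delta}=0$) shows each $\alpha_i=c_i\delta$ with $c_i<0$. Consequently $\Ker(\delta|_A)$ lies in the kernel of the $A$-action on $\mathfrak{n}$ and is central in $G$; since the $\R$-rank of $A$ is at least two, this kernel is non-compact. Taking $Y=Z=\Ker(\delta|_A)$ trivially verifies the hypotheses. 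The main obstacle I anticipate is justifying the uniform structural setup, particularly in the non-Archimedean case: one needs that amenability of a connected linear algebraic $\K$-group in characteristic zero yields an abelian $A$ (up to compact) whose characters, read through $\log|\cdot|$, give real weights $\alpha_i$ matching the modular function in the expected way. Once this is done, the weight-theoretic analysis transfers uniformly to both cases.
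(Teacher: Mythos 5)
Your overall strategy is the paper's: reduce to a (tri)angulable group, decompose $\mathfrak{n}$ into weight spaces for a Cartan part $A$, and produce $\xi$ with $\Delta(\xi)>1$ together with a non-compact $Y\subset W_\xi$ centralizing the non-compact normal $Z=Z(N)$. Your first case ($Y=\exp$ of the sum of weight spaces with $\alpha(\xi)\ge 0$) is a correct and in fact more explicit version of what the paper does. But your second case contains a genuine gap. From $\alpha_i=c_i\delta$ you conclude that $\Ker(\delta|_A)$ ``lies in the kernel of the $A$-action on $\mathfrak{n}$ and is central in $G$.'' The weights only record $\log$ of the \emph{moduli} of the eigenvalues, so vanishing of all weights on $\Ker(\delta|_A)$ does not force the action to be trivial: it can be unipotent (Lie case) or factor through a compact group of units (non-Archimedean case). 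Concretely, for $G=(\Q_p^*)^2\ltimes\Q_p^2$ with $(s,t)$ acting by $st\cdot\mathrm{id}$, all weights are proportional to $\delta$ and $\Ker(\delta|_A)=\{|st|=1\}$ is non-compact, yet it acts on $N$ through all of $\mathcal{O}^*$ and is neither central nor normal in $G$; your choice $Y=Z=\Ker(\delta|_A)$ then violates the normality hypothesis on $Z$ in Proposition \ref{lem:nonHeintze}. (The repair here is to use the genuine pointwise kernel $\bigcap_i\Ker(\chi_i)$ of the characters, which is a positive-dimensional subtorus when the $|\chi_i|$ are all proportional, and is central; but that is an extra argument you did not make.) In the Lie case the problem is of a different nature: a non-compact $\Ker(\delta|_A)$ with all weights zero would act unipotently on $\mathfrak{n}$, so $\Ker(\delta|_A)\ltimes N$ would be a larger connected nilpotent normal subgroup, contradicting that $N$ is the nilradical --- i.e.\ your Case 2 is actually vacuous there, which shows the case division by the rank of $A$ itself is not the right one.

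The paper sidesteps this by stratifying according to $k=\mathrm{rank}(A/B)$ with $B=\bigcap_i\Ker(\omega_i)$, so that ``all weights proportional to $\delta$'' lands in $k=1$, where $\mathfrak{n}$ has codimension one and non-Heintzeness directly supplies an eigenvalue of modulus $\le 1$ for $\xi$; for $k\ge 2$ it uses that the half-space $E_+$ cannot be contained in the salient cone $E_{\mathrm{dil}}$. Two further points to fix in your write-up: in the Lie case the nilradical need not admit an abelian (or any) complement, and the paper only uses a Cartan decomposition $G=AN$ with $A$ nilpotent, not a semidirect product; and on a zero-weight space with nontrivially unipotent action, $\xi^{-n}h\xi^{n}$ grows polynomially, so $\exp(\mathfrak{n}_\alpha)\subset W_\xi$ requires either $\alpha(\xi)>0$ or passing to the fixed subspace (which still yields a non-compact $Y$).
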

\begin{proof}
The ``only if" part is not the point of this section; it follows from Propositions \ref{lem:nonHeintze} and \ref{ct}. Let us focus on the ``if" part, assuming that $G$ is not of Heintze type.

If $G$ is non-Archimedean, removing a maximal anisotropic torus in a Levi decomposition, we see that it has a cocompact Zariski-closed normal subgroup which is triangulable (and necessarily non-Heintze as well), so we can also suppose in that case that $G$ is triangulable (the group $Z$ will be chosen characteristic so will remain normal in the whole group).

Let $N$ be the nilpotent radical and $Z$ its center; the assumptions imply that $Z$ is not compact. 
Write a Cartan decomposition $G=AN$; this means that $A$ is nilpotent and $G=AN$ (in the non-Archimedean case we can have moreover $G=A\ltimes N$ and $A$ abelian). 

Consider the adjoint action of $A$ on $\mathfrak{g}$. It defines a homomorphism $A\to\textnormal{GL}(\mathfrak{g})$. Its Zariski closure is connected nilpotent, so decomposes as a direct product $DU$ with $D$ diagonalizable (over $\mathbf{K}$) and $U$ unipotent. If $a\in A$, we can thus decompose the corresponding automorphism of $\mathfrak{g}$ as $du$ and write (after choice of a suitable basis) $d=\textnormal{diag}(\lambda_1(a),\dots,\lambda_m(a))$. 
Define the {\it weights} as the homomorphisms $\omega_i:A\to\mathbf{R}$ defined by $\omega_i(a)=\log(|\lambda_i(a)|)\in\mathbf{R}$. Set $B=\cap_i\textnormal{Ker}(\omega_i)$, so $A/B$ is isomorphic to $\mathbf{R}^k$ (Lie case) or $\mathbf{Z}^k$ (non-Archimedean case) for some $k$. For every $a\in A$, we have $\Delta(a)=\exp(\sum_i\omega_i(a))$. Set $$E_+=\{a\in A|\Delta(a)>1\}=\left\{a\in A|\sum_i\omega_i(a)\ge 0\right\},$$ which can be viewed as a ``half-space" in $A/B$. Set
$$E_{\text{dil}}=\{a\in A|\forall i,\omega_i(a)\ge 0\}.$$
Clearly, $E_{\text{dil}}\subset E_+$, but in $A/B$, $E_{\text{dil}}/B$ identifies with $\mathbf{R}_+^k$ or $\mathbf{N}^k$.

Therefore, if $k\ge 2$, then there exists an element $\xi$ in $E_+$ which is not in $E_{\text{dil}}$. So $\omega_i(\xi)<0$ for some $i$. This means that $W_\xi$ is not compact and the hypotheses are fulfilled.

Let us now suppose that $k=1$ ($k=0$ is ruled out as it would force $G$ to be unimodular (and even nilpotent)). Note that this forces $\mathfrak{n}$ to have codimension one. Pick $\xi$ with $\Delta(\xi)>1$. As the action of $\xi$ on $\mathfrak{n}$ has at least one eigenvalue of modulus $\le 1$ (because $G$ is not Heintze), we have $W_\xi$ non-compact.
\end{proof}

\begin{rem}
Let $\mathbf{R}^2$ act on $\mathbf{C}^2$ by $(r,\theta)\cdot(z_1,z_2)=r(e^{i\theta}z_1,e^{\pi i\theta}z_2)$. Then the semidirect product $\mathbf{R}^2\ltimes\mathbf{C}^2$ does not satisfy the hypotheses of Proposition \ref{lem:nonHeintze}, yet is non-Heintze. Note that this group is not triangulable.
\end{rem}

\begin{lem}
Let be a Lie group with $\pi_0(G)$ finite, and assume that $G$ is of Heintze type with $G=S\ltimes N$ as in Definition \ref{dco}. Set $K=\textnormal{Ker}(\Delta_G)\cap S$. Then $K$ is a compact normal subgroup of 
$S$ and has a direct factor in $S$, isomorphic to $\mathbf{R}$. Moreover, any element $g$ in $S-K$ generates a cocompact subgroup contracting $N$.\label{allco}
\end{lem}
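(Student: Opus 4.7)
My plan is to first pin down the structure of $S$ as $\R\times K$ with $K$ finite, using the modular function of $G$, and then to read off the cyclic-subgroup statement from this direct-product decomposition.

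Since $N$ is simply connected, $\pi_0(G)=\pi_0(S)$ is finite, and the cocompact subgroup $\Z=\langle\xi\rangle$ of $S$ meets the open subgroup $S_0$ in a finite-index subgroup, which is therefore cocompact in $S_0$. Hence $S_0$ is a non-compact connected Lie group of linear growth, which forces $\dim S_0=1$ and $S_0\cong\R$. Since $\pi_0(S)$ acts on $S_0\cong\R$ through $\Aut(\R)=\R^{\times}$ with image in the finite subgroup $\{\pm1\}$, every element of $S$ preserves Haar measure on $S_0$; combined with the unimodularity of $\R$, this gives $\Delta_S\equiv1$, so for $s\in S$ the semidirect-product formula reduces to $\Delta_G(s)=|\det(\textnormal{Ad}(s)|_{\mk{n}})|^{-1}$.

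The contracting hypothesis on $\xi$ forces every eigenvalue of $\textnormal{Ad}(\xi)$ on $\mk{n}$ to have modulus strictly less than one (a general fact about contracting $\Z$-actions on simply connected nilpotent groups), so in particular $\Delta_G(\xi)>1$. Writing $\textnormal{Ad}|_{S_0}(t)=e^{tD}$ and using that $\xi^n\in S_0$ for $n=|\pi_0(S)|$, the inequality $\Delta_G(\xi^n)>1$ forces $\mathrm{tr}(D)\neq 0$, so $\Delta_G|_{S_0}\colon\R\to\R_{>0}$ is a non-trivial homomorphism with trivial kernel and surjective image. Consequently $K\cap S_0=\{e\}$, so $K$ injects into $\pi_0(S)$ and is finite, hence compact. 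As the kernel of $\Delta_G|_S$, $K$ is normal in $S$; and since $\Delta_G|_{S_0}$ already surjects onto $\R_{>0}$ we have $S=S_0K$, which combined with $S_0\cap K=\{e\}$ and the normality of both factors gives $S\cong S_0\times K=\R\times K$.

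For the final statement, let $g=(t,k)\in\R\times K$ with $g\notin K$; then $t\neq 0$, so $g^{|K|}=(|K|t,e)\in S_0$ is non-trivial, making $\langle g^{|K|}\rangle$ cocompact in $S_0$ and hence $\langle g\rangle$ cocompact in $S$. Replacing $g$ by $g^{-1}$ if necessary we may assume $\Delta_G(g)>1$. The contraction of $\xi^n\in S_0$ forces the real parts of all eigenvalues of $D$ to share a common non-zero sign, and this sign is precisely the one for which the half-line $\{t\in S_0:\Delta_G(t)>1\}$ coincides with the half-line on which the $\textnormal{Ad}$-action on $\mk{n}$ is contracting. Therefore $g^{|K|}$ contracts $N$, and taking $|K|$-th roots of eigenvalues gives that $g$ itself contracts $N$. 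The main obstacle is this last sign-matching step, which relies on the observation that the existence of one contracting element in $S_0$ pins down the signs of the real parts of all eigenvalues of the infinitesimal generator $D$.
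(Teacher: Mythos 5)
The structural reduction at the start is incorrect, and this is a genuine gap rather than a presentational issue. A non-compact connected Lie group of linear growth need not be one-dimensional: $S_0=SO(3)\times\R$ (or $C\times\R$ for any connected compact group $C$) contains a cocompact copy of $\Z$ and has linear growth, and such groups genuinely occur here --- the paper's own Example allows $S=K\times\R$ with $K$ an arbitrary compact group, e.g.\ $G=(SO(3)\times\R)\ltimes N$. In that case $\Ker(\Delta_G)\cap S$ contains $SO(3)$, so your conclusions that $K\cap S_0=\{e\}$, that $K$ injects into $\pi_0(S)$ and is finite, and that $S\cong\R\times(\textnormal{finite group})$ all fail. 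Note that the lemma only asserts that $K$ is \emph{compact}, not finite; the finiteness you derive is a symptom of having discarded the possibly positive-dimensional compact part of $S$ at the step ``linear growth forces $\dim S_0=1$''.

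This omission hides the actual content of the lemma. Once $K$ may be positive-dimensional, producing a direct factor isomorphic to $\R$ requires finding a one-parameter subgroup on which $\Delta_G$ is non-trivial and which \emph{centralizes} $K$; this is where the real work lies. The paper's proof observes that $K=\Ker(\Delta_G)\cap S$ is a maximal compact subgroup of $S$ (so $K\cap S_0=K_0$, with $K\cap S_0$ compact because $S_0$ is quasi-isometric to $\R$ and $\Delta_G|_{S_0}$ is onto $\R_{>0}$), uses that the outer automorphism group of the connected compact group $K_0$ is discrete to find a cocompact open subgroup of $S$ acting on $K_0$ by inner automorphisms, and extracts from it a one-parameter subgroup $L$ centralizing $K$ with $\Delta_G|_L$ surjective, whence $S=K\times L$. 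Your final step (one contracting element pins down the signs of the real parts of the eigenvalues of the infinitesimal generator, so every $g\in S\setminus K$ with $\Delta_G(g)>1$ contracts $N$) is essentially the same argument as the paper's and would survive once the correct decomposition $S=K\times L$ with $K$ compact is in place; but as written your proof only treats the special case where $S$ is virtually $\R$ with finite ``compact part''.
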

\begin{proof}
As $S_0$ contains a cocompact cyclic subgroup, we have $K\cap S_0$ compact, hence $K$ is compact. Since both $S$ and $N$ are unimodular, $\Delta_G$ is non-trivial in restriction to $S$, hence to $S_0$. Note that $K$ is a maximal compact subgroup in $S$ and in particular $K\cap S_0=K_0$. As $K_0$ is a connected compact Lie group, its inner automorphism group is open. In particular, the action of $S_0$ by conjugation on $K_0$ is by inner automorphisms and therefore its kernel $S_1$ satisfies $S_0=K_0S_1$, in particular $S_1$ is cocompact in $S$. Let $L$ denote a one-parameter subgroup of $S_1$, not contained in $\textnormal{Ker}(\Delta_G)$. In particular, $L$ is closed and non-compact; by construction it centralizes $K$. As $\Delta_G$ is surjective in restriction to $L$, we have $S=KL$ and as $[K,L]=1$, $K$ is compact and $K\cap L=1$, this is a (topological) direct product.

Let $\xi$ be an element of $S$ contracting $N$. For some suitable norm, the adjoint action of $K$ on $\mathfrak{n}$ is isometric, and the action of the element $\xi$ is strictly contracting. For some $\gamma\in L$, we have $\gamma k=\xi$ for some $k\in K$. Hence $\gamma$ contracts $\mathfrak{n}$. So it has all its eigenvalues on $\mathfrak{n}$ of modulus $<1$. Therefore all $(\gamma^t)_{t>0}$ is contracting, and therefore any element of the form $\gamma^tk$ with $t>0$ and $k\in K$ is contracting; each cyclic subgroup of $S$ not contained in $K$ contains such an element.
\end{proof}

\begin{lem}\label{coch}
Let $G_1$ be a Lie group of Heintze type and $G_2$ a connected, cocompact normal subgroup. Then $G_2$ is of Heintze type.
\end{lem}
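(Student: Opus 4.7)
The plan is to reduce to the case $G_1$ connected and then show that the contraction dynamics force $G_2$ to contain $N$, after which the result is essentially structural.

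Since $G_2$ is connected, $G_2\subset (G_1)_0$, and cocompactness of $G_2$ in $G_1$ yields cocompactness of $G_2$ in $(G_1)_0$. By Lemma \ref{allco}, $G_1=(K\times T)\ltimes N$ with $K$ compact and $T\cong\R$, so $(G_1)_0=(K_0\times T)\ltimes N$, with any $\xi\in T$ generating a cocompact cyclic subgroup of $K_0\times T$ that contracts $N$. Hence $(G_1)_0$ is again of Heintze type, so I may assume from the start that $G_1$ itself is connected, with $S=K\times T$, $K$ compact connected, $T\cong\R$.

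The key step is to show $\pi(G_2)\subset G_2$, where $\pi:G_1\to S$ is the projection. Fix $(s,n)\in G_2$, and conjugate by $\xi^k$. Because $\xi\in T$ commutes with all of $S=K\times T$, one obtains
$$\xi^k(s,n)\xi^{-k}=(s,\sigma_\xi^k(n))\in G_2.$$
Since $\sigma_\xi$ acts on the simply connected nilpotent $N$ with all eigenvalues of modulus $<1$ on $\mathfrak{n}$, $\sigma_\xi^k(n)\to 1$; closedness of $G_2$ then gives $(s,1)\in G_2$. Consequently, setting $N_2=G_2\cap N$, a short verification (extracting $(1,n)=(s^{-1},1)(s,n)$) yields $G_2=\pi(G_2)\ltimes N_2$ as a semidirect product of Lie groups.

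Next I establish $N_2=N$. Since $G_2\cong\pi(G_2)\times N_2$ as a topological space and $G_2$ is connected, both $\pi(G_2)$ and $N_2$ are connected. Moreover $N_2=G_2\cap N$ is the intersection of two normal subgroups of $G_1$, hence normal in $G_1$ and in particular in $N$; so $N/N_2$ is a connected simply connected nilpotent Lie group (simple connectivity from the homotopy long exact sequence applied to $N_2\to N\to N/N_2$). The orbit of $N$ on the compact space $G_1/G_2$ is $N/N_2$, hence $N/N_2$ is compact. A connected simply connected compact nilpotent Lie group is trivial, so $N_2=N$ and $G_2=\pi(G_2)\ltimes N$.

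Finally I verify the Heintze axioms for $G_2$. The subgroup $\pi(G_2)\subset S=K\times T$ is closed, connected, and cocompact, hence projects onto $T\cong\R$; pick $(k_0,t_0)\in\pi(G_2)$ with $t_0\neq 0$. By Lemma \ref{allco}, $(k_0,t_0)$ generates a cyclic subgroup cocompact in $S$ which contracts $N$; a fortiori it is cocompact in $\pi(G_2)$. This exhibits $G_2$ as a group of Heintze type. The one place calling for care is the connectedness of $N_2$: without it, one could have $N_2\subsetneq N$ with $N/N_2$ a nontrivial torus (as in the Heisenberg example with lattice-like intersections), but the semidirect product factorization $G_2=\pi(G_2)\ltimes N_2$ together with the connectedness of $G_2$ rules this out.
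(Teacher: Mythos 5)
Your proof is correct, but it runs at the group level where the paper's runs at the Lie algebra level. The paper observes that $\mathfrak{s}$ and $\mathfrak{n}$ are sums of characteristic subspaces of $\textnormal{Ad}(\xi)$ with disjoint sets of eigenvalue moduli (modulus $1$ on $\mathfrak{s}$, modulus $<1$ on $\mathfrak{n}$), so the ideal $\mathfrak{g}_2$ splits as $\mathfrak{s}_2\oplus\mathfrak{n}_2$, whence $G_2=S_2\ltimes N_2$ by connectedness; cocompactness then forces $N_2=N$, and Lemma \ref{allco} finishes exactly as in your last paragraph. You obtain the same splitting by a global dynamical argument: conjugation by $\xi^k$ drives the $N$-component to the identity, and closedness of $G_2$ yields $\pi(G_2)\subset G_2$, hence $G_2=\pi(G_2)\ltimes N_2$. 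This is a legitimate substitute that avoids passing from Lie algebra ideals to subgroups (it only uses that a cocompact subgroup is closed), at the cost of a slightly longer verification that $N_2=N$; your route through ``$N/N_2$ is compact, connected, simply connected nilpotent, hence trivial'' is the same fact the paper compresses into ``necessarily $N_2$ is cocompact and connected, hence $N_2=N$''. The one point you leave implicit is why the $N$-orbit in $G_1/G_2$ is compact: this needs $NG_2$ to be closed, which does hold here since $NG_2=\pi(G_2)\ltimes N$ with $\pi(G_2)=G_2\cap S$ closed — a gloss at the same level of detail as the paper's own. Everything else (the reduction to $G_1$ connected via Lemma \ref{allco}, the surjection of $\pi(G_2)$ onto $T$, and the appeal to Lemma \ref{allco} for the contracting cocompact cyclic subgroup) checks out.
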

\begin{proof}
Write $G_1=S\ltimes N$ as in the definition of Heintze type and let $\xi$ be a contracting element. Then both $\mathfrak{s}$ and $\mathfrak{n}$ are sums of characteristic subspaces for the adjoint action of $\xi$. Therefore, $\mathfrak{g}_2=\mathfrak{s}_2\ltimes\mathfrak{n}_2$ for some subspaces (necessarily ideals) $\mathfrak{s}_2$ and $\mathfrak{n}_2$ of $\mathfrak{s}$ and $\mathfrak{n}$, hence $G_2=S_2\ltimes N_2$ with $S_2$ and $N_2$ normal subgroups of $S$ and $N$ respectively. As $G_2$ is cocompact and connected, necessarily $N_2$ is cocompact and connected, hence $N_2=N$. In view of Lemma \ref{allco}, $S_2$ contains a cocompact cyclic subgroup contracting $N$, so $G_2$ is of Heintze type.
\end{proof}

\begin{lem}Let $G$ be a Lie group with $\pi_0(G)$ finite. Let $G_1$ be a cocompact, normal, contractible subgroup of $G$. Then $G=K\ltimes G_1$ for some compact subgroup (actually, any maximal compact subgroup).\label{homo}
\end{lem}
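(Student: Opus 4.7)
The plan is to take $K$ to be any maximal compact subgroup of $G$ (such subgroups exist and are all conjugate, by the Iwasawa--Malcev theorem, since $\pi_0(G)$ is finite) and verify the two conditions $K\cap G_1=\{1\}$ and $KG_1=G$; combined with the normality of $G_1$, these give the semidirect product decomposition $G=K\ltimes G_1$. The first identity is essentially immediate: any connected Lie group is homeomorphic to the product of a maximal compact subgroup with a Euclidean space, so if the group is contractible then its maximal compact subgroup is itself contractible and compact, hence trivial. Applied to $G_1$, this shows that $G_1$ has no non-trivial compact subgroup, so $K\cap G_1=\{1\}$.

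For the identity $KG_1=G$, I consider the projection $\pi\colon G\to G/G_1$. Since $G_1$ is closed and cocompact, $G/G_1$ is a compact Lie group; it inherits from $G$ the finiteness of $\pi_0$. To conclude that the compact subgroup $\pi(K)$ equals $G/G_1$, I would invoke the standard structural fact that for a closed \emph{connected} normal subgroup $N$ of a Lie group with finitely many components, the image $KN/N$ of any maximal compact subgroup is again a maximal compact subgroup of the quotient. Applied here with $N=G_1$ (connected, since contractible), this gives $\pi(K)$ maximal compact inside the already compact group $G/G_1$, whence $\pi(K)=G/G_1$, i.e.\ $KG_1=G$.

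Combining the two identities yields the decomposition $G=K\ltimes G_1$, and the parenthetical strengthening is immediate since the argument used only the maximal compactness of $K$. The main obstacle is justifying the structural fact used in the second paragraph, namely that the image of a maximal compact subgroup in a quotient by a closed connected normal subgroup remains maximal compact. Should one wish to avoid appealing to this fact as a black box, an alternative is to first apply a classical splitting theorem (e.g.\ of Mostow) for extensions of compact Lie groups by simply connected Lie groups to produce some single compact complement $K_0$, and then transport the decomposition to an arbitrary maximal compact subgroup by using the conjugacy of maximal compact subgroups together with the normality of $G_1$ (which makes both conditions $K\cap G_1=\{1\}$ and $KG_1=G$ invariant under conjugation by any element of $G$).
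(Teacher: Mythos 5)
Your proof is correct, but it follows a different route from the paper's. The paper argues homotopy-theoretically: since $G_1$ is contractible, the fibration $G\to G/G_1$ is a homotopy equivalence, as is the inclusion $K\subset G$, so the homomorphism $K\to G/G_1$ of compact Lie groups is a homotopy equivalence; its kernel and the quotient of $G/G_1$ by its image are then contractible compact manifolds, hence points, so $K\to G/G_1$ is an isomorphism and both required identities drop out simultaneously. You instead split the two identities: injectivity via the Iwasawa--Malcev decomposition of the contractible group $G_1$ (no non-trivial compact subgroups, since a contractible compact Lie group is trivial), and surjectivity via the structure theorem that $KG_1/G_1$ is maximal compact in $G/G_1$ when $G_1$ is a closed \emph{connected} normal subgroup. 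That theorem is a genuine (if classical) black box, and you are right to flag it as the one non-elementary input; note that connectedness of $G_1$ --- which here comes for free from contractibility --- is essential, as the example $G=\R$, $N=\Z$, $K=\{1\}$ shows. Your fallback via a splitting theorem plus conjugacy of maximal compact subgroups (both identities being visibly conjugation-invariant thanks to normality of $G_1$) is also sound. The paper's argument has the advantage of being self-contained modulo basic fibration theory and of treating kernel and image in one stroke; yours trades that for standard structure theory and makes the role of contractibility of $G_1$ (killing compact subgroups) more transparent.
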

\begin{proof}Let $K$ be a maximal compact subgroup of $G$. From the exact sequence associated to a fibration, we see that the natural map $G\to G/G_1$ is a homotopy equivalence. Also, the inclusion $K\subset G$ is a homotopy equivalence. So $K\to G/G_1$ is a homomorphism between compact Lie groups which is a homotopy equivalence. Again using the long exact sequence associated to a fibration, we obtain that both the kernel and the cokernel of $K\to G/G_1$ are contractible. As these are compact manifolds, they are necessarily points, that is, the map $K\to G/G_1$ is an isomorphism. So $G=K\ltimes G_1$.
\end{proof}

\begin{lem}\label{ncar}
Let $G$ be a Lie group with $\pi_0(G)$ finite. Assume that $G$ is of Heintze type with $G=S\ltimes N$ as in Definition \ref{dco}. Then $G/N$ is the largest quotient of $G$ with polynomial growth (also known as ``exponential radical of $G$"). In particular, $N$ is a characteristic subgroup of $G$.
\end{lem}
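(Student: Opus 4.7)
The strategy is to identify $N$ with the exponential radical of $G$, i.e.\ the smallest closed normal subgroup of $G$ whose quotient has polynomial growth. Once this is done, the characteristic property of $N$ is automatic: for any $\phi\in\Aut(G)$ the subgroup $\phi(N)$ has the same defining property, hence coincides with $N$. One inclusion is easy: by Lemma~\ref{allco} we have $G/N\simeq S\simeq K\times T$ with $K$ compact and $T\simeq\R$, which has linear growth.

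For the converse, let $M\triangleleft G$ be any closed normal subgroup with $G/M$ of polynomial growth; I aim to show $N\subseteq M$. Set $M'=M\cap N$, a closed $\xi$-stable normal subgroup of the simply connected nilpotent Lie group $N$. The pivotal structural step is to prove $M'$ is \emph{connected}. Its identity component $M'_0$ is again $\xi$-invariant and normal in $G$, and the quotient $N/M'_0$ is simply connected nilpotent with an induced contracting $\xi$-action. If the discrete subgroup $M'/M'_0\subset N/M'_0$ contained an element $g\neq 1$, then $\xi^n g\xi^{-n}\to 1$ by the contraction, contradicting the discreteness of $M'/M'_0$. Hence $M'=M'_0$ is connected and $N/M'$ is simply connected nilpotent.

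Suppose for contradiction that $M'\subsetneq N$. Then $N/M'$ is non-trivial, and since $N\cap M=M'$ it embeds as a closed connected normal subgroup of $G/M$. The image $\bar\xi$ of $\xi$ acts on $N/M'$ by conjugation as the induced contracting automorphism; accordingly, $\textnormal{Ad}(\bar\xi)$ preserves $\textnormal{Lie}(N/M')\subset\textnormal{Lie}(G/M)$ and all its eigenvalues on that subspace have modulus $<1$. Since $\pi_0(G/M)$ is finite (bounded by $\pi_0(G)$), some power $\bar\xi^k$ with $k\ge 1$ lies in $(G/M)_0$. The identity component $(G/M)_0$ is an open subgroup of $G/M$, hence also has polynomial growth, so by Guivarc'h's theorem it is of type~$R$: every $\textnormal{Ad}(g)$ with $g\in(G/M)_0$ has all eigenvalues of modulus~$1$. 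But the eigenvalues of $\textnormal{Ad}(\bar\xi^k)|_{\textnormal{Lie}(N/M')}$ are the $k$-th powers of those of $\textnormal{Ad}(\bar\xi)|_{\textnormal{Lie}(N/M')}$ and are thus of modulus $<1$, a contradiction. Hence $M'=N$, i.e.\ $N\subseteq M$.

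The main technical point I anticipate is the connectedness of $M'$; the remainder of the argument is a straightforward combination of the contracting hypothesis with Guivarc'h's classical characterization of polynomial-growth Lie groups.
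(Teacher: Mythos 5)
Your proof is correct and follows essentially the same route as the paper: the decisive step in both is Guivarc'h's theorem that a polynomial-growth quotient forces all eigenvalues of the adjoint representation on $\mathfrak{g}/\mathfrak{m}$ to have modulus one, which is incompatible with the contracting element's eigenvalues of modulus $<1$ on $\mathfrak{n}$ unless $\mathfrak{n}\subset\mathfrak{m}$. The paper carries this out in three lines entirely at the Lie algebra level; your additional group-level bookkeeping (connectedness of $M\cap N$, passing to $(G/M)_0$) is sound but not needed beyond what the Lie algebra argument already gives.
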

\begin{proof}
Obviously $G/N$ has polynomial growth. Conversely, let $M$ be a normal, closed subgroup of $G$ such that $G/M$ has polynomial growth and let us show that $N\subset M$. By Guivarc'h \cite{Gui}, this means that in the adjoint representation of $G$ on $\mathfrak{g}/\mathfrak{m}$, all eigenvalues have modulus one. This forces $\mathfrak{n}\subset\mathfrak{m}$, so $N\subset M$.
\end{proof}

\begin{lem}\label{hcoc}
Let $G$ be a Lie group with $\pi_0(G)$ finite. Assume that $G$ has a normal, cocompact subgroup $G_1$ which is of Heintze type. Then $G$ is of Heintze type.
\end{lem}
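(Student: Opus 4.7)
The plan is to produce an explicit Heintze decomposition $G = S \ltimes N$ with $N := N_1$, by transferring the Heintze structure of $G_1$ upward through a careful analysis of the quotient $Q := G/N_1$ and a lifting argument.

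First, I invoke Lemma \ref{ncar} to recognize $N_1$ as the exponential radical of $G_1$; as such it is characteristic in $G_1$ and therefore normal in $G$. The quotient $Q = G/N_1$ contains $G_1/N_1 \cong S_1 = K_1 \times T_1$ (from Lemma \ref{allco}) as a cocompact normal subgroup, hence has polynomial growth. Since $N_1$ is nilpotent, $\mathrm{Ad}|_{N_1}$ is unipotent and the modular function $\Delta_G$ descends to a homomorphism $\bar\Delta : Q \to \R_+^*$. The kernel $M$ of $\bar\Delta$ meets $S_1$ in $K_1$, and the cocompactness of $S_1$ in $Q$ forces $M$ itself to be compact. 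Moreover $T_1$ maps isomorphically onto $Q/M \cong \R$, yielding $Q = M \rtimes T_1$.

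Next, I lift this back to $G$. Let $\pi : G \to Q$ be the quotient and $\tilde M := \pi^{-1}(M)$, a closed normal subgroup of $G$ with finite $\pi_0$ containing $N_1$ as a contractible, normal, cocompact subgroup. Lemma \ref{homo} applied to $\tilde M$ produces a maximal compact subgroup $K$ of $\tilde M$ with $\tilde M = K \ltimes N_1$. The inclusion $T_1 \hookrightarrow G$ satisfies $T_1 \cap \tilde M = \{e\}$ (since $T_1 \cap M = \{e\}$ in $Q$ and $T_1 \cap N_1 = \{e\}$ inside $G_1$) and $T_1 \cdot \tilde M = G$ (as both sides surject onto $Q$ and contain the kernel $N_1$), so $G = T_1 \ltimes \tilde M = T_1 \ltimes (K \ltimes N_1)$.

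The hard part is the last step: to conclude $G = (T_1 K) \ltimes N_1$, one needs $T_1$ to normalize $K$, whereas a priori conjugation by $T_1$ only permutes the maximal compact subgroups of $\tilde M$ within a single $\tilde M$-conjugacy class. I would arrange this by choosing $K$ to be a maximal compact subgroup of $G$ itself (unique up to conjugacy by Cartan--Iwasawa--Malcev): since $G/\tilde M \cong \R$ has no nontrivial compact subgroup, any such $K$ lies in $\tilde M$, and one then exploits a fixed-point argument on the contractible space of maximal compact subgroups of $\tilde M$, using the contracting action of $T_1$ on $N_1$, to ensure $T_1$ normalizes $K$. Once this is achieved, $S := T_1 K$ is a closed subgroup of $G$, $G = S \ltimes N_1$, and the cocompact subgroup $\Z = \langle \xi \rangle \subset S_1 \subset S$ still contracts $N_1$, exhibiting $G$ as of Heintze type.
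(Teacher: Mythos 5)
Your reduction follows a genuinely different route from the paper's. The paper works with the radical $R$ of $G$: it proves that $R/N_1$ is central in $G/N_1$ (via the structure of $M/N_1$ as a central torus with a one\--dimensional complement and the triviality of the action of the compact group $G/R$), and deduces that the contractible cocompact subgroup $T_1\ltimes N_1$ is \emph{normal} in $G$; the remaining adjustment of the compact factor then only requires handling a \emph{compact} group. You instead isolate the ``complementary'' subgroup $\tilde M=\pi^{-1}(\Ker\bar\Delta)$, which is normal for free, obtain $G=\tilde M\rtimes T_1$ and $\tilde M=K\ltimes N_1$, and push all of the difficulty into making a maximal compact subgroup of $\tilde M$ invariant under the \emph{non-compact} group $T_1$. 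Everything up to that point is correct (the identity $\Delta_G|_{G_1}=\Delta_{G_1}$, the compactness of $M$, the splitting $Q=M\rtimes T_1$, and the applicability of Lemma \ref{homo} to $\tilde M$), and is arguably cleaner than the paper's normality argument.

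The gap is in the last step, which you yourself flag as the hard part. A ``fixed-point argument on the contractible space of maximal compact subgroups of $\tilde M$'' is not available for $T_1\cong\R$: contractibility (or non-positive curvature) of that space produces fixed points only for compact groups, and $\R$ acts on contractible spaces without fixed points in general (translations on $\R$). What actually makes your step work is the contraction, and it must be used explicitly. Since every maximal compact subgroup of $\tilde M=K_0\ltimes N_1$ is of the form $wK_0w^{-1}$ with $w\in N_1$, and $\xi K_0\xi^{-1}=uK_0u^{-1}$ for some $u\in N_1$, the existence of a $\xi$-invariant maximal compact $K_*$ amounts to solving a twisted equation $w^{-1}\alpha(w)\equiv u^{-1}$ modulo $C_{N_1}(K_0)$, where $\alpha$ denotes conjugation by $\xi$; this is solvable (with an essentially unique solution) because $\textnormal{Ad}(\xi)-1$ is invertible on $\mathfrak{n}_1$, all eigenvalues of $\textnormal{Ad}(\xi)$ having modulus $<1$ -- an induction along the lower central series of $N_1$. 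To pass from $\xi$-invariance to $T_1$-invariance you then need the \emph{uniqueness} of the $\xi$-invariant maximal compact subgroup (again from the invertibility of $\textnormal{Ad}(\xi)-1$) together with the commutativity of $T_1$, which makes $tK_*t^{-1}$ again $\xi$-invariant. None of this is supplied in your sketch, and it is the crux of the lemma; once it is, $S=T_1K_*$ does yield the Heintze decomposition as you claim.
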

\begin{proof}
Write $G_1=S\ltimes N$ as in Definition \ref{dco}. By Lemma \ref{ncar}, $N$ is characteristic in $G_1$ and therefore is normal in $G$.

By Lemma \ref{allco}, $S=K_1\times T_1$ with $T\simeq\mathbf{R}$. Let us prove that $T\ltimes N$ is normal in $G$. The subgroup $T\ltimes N$ is contained in the radical $R_1$ of $G_1$. Now $R_1$ is characteristic in $G_1$, so is normal in $G$, so is contained in the radical $R$ of $G$. Hence $T\ltimes N\subset R$. Therefore the image of $T$ in $G/N$, which we still denote by $T$, is contained in the radical $R/N$ of $G/N$. Since $G_1$ is normal and cocompact, the restriction of $\Delta_G$ to $G_1$ is $\Delta_{G_1}$. So the map $\Delta_G$ can be viewed as a homomorphism on $G/N$, which is non-trivial on $T$, hence on $R$. Set $M=R\cap\textnormal{Ker}(\Delta)$, so $R/M\simeq\mathbf{R}$ and $M/N$ is compact. Since $R/N$ is connected and $M/N$ is its maximal compact subgroup, $M/N$ is connected. As a solvable, connected compact normal subgroup of $R/N$, $M/N$ is a central torus. Since it has codimension one in $R/N$, there is one-dimensional factor, hence a direct factor of $M/N$ in $R/N$. So $R/N$ is abelian. In particular the action of $G$ on $R/N$ by conjugation factors through the compact group $G/R$. This group preserves a direct product decomposition $R/N=M/N\oplus V$; since $G/R$ is compact, it action on the compact torus $M/N$ is trivial. Moreover, its action preserves leaves invariant the function $\Delta_G$ and therefore acts trivially on $V$. So $R/N$ is central in $G/N$,
hence $T$ is normal in $G/N$. So $T\ltimes N$ is normal in $G$.
\end{proof}

\begin{lem}\label{qtc}
Let $G$ be a connected amenable Lie group and $W$ a compact normal subgroup. If $G/W$ is of Heintze type, then so is $G$.
\end{lem}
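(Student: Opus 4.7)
The plan is to construct the Heintze decomposition of $G$ directly from a spectral decomposition of $\mathrm{Ad}(\xi)$ on $\mathfrak g$, where $\xi \in G$ is any lift of a contracting element $\bar\xi$ in the $\bar S$-factor of $G/W = \bar S \ltimes \bar N$, chosen (by Lemma~\ref{allco}) so that $\langle\bar\xi\rangle$ is cocompact in $\bar S$.

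The key input is that $G$, being connected, acts on the compact normal subgroup $W$ by conjugation with image in $\mathrm{Aut}(W)_0$; this identity component is compact for any compact Lie $W$ (it equals $\mathrm{Inn}(W_0) = W_0/Z(W_0)$). Hence $\mathrm{Ad}(\xi)|_{\mathfrak w}$ lies in a compact subgroup of $GL(\mathfrak w)$ and has all eigenvalues of modulus~$1$. On $\bar{\mathfrak g} = \mathfrak g/\mathfrak w$ the operator $\mathrm{Ad}(\bar\xi)$ has modulus-$1$ eigenvalues on $\bar{\mathfrak s}$ (since $\bar S$ is compact-by-$\R$ by Lemma~\ref{allco}) and modulus-$<1$ eigenvalues on $\bar{\mathfrak n}$. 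Every eigenvalue of $\mathrm{Ad}(\xi)$ on $\mathfrak g$ therefore satisfies $|\lambda|\le 1$, and I set
\[
\mathfrak n := \bigoplus_{|\lambda|<1}\mathfrak g_\lambda, \qquad \mathfrak s := \bigoplus_{|\lambda|=1}\mathfrak g_\lambda.
\]
The multiplicativity $[\mathfrak g_\lambda,\mathfrak g_\mu] \subset \mathfrak g_{\lambda\mu}$ makes $\mathfrak n$ an ideal of $\mathfrak g$ and $\mathfrak s$ a subalgebra with $\mathfrak w \subset \mathfrak s$, so that $\mathfrak g = \mathfrak s \ltimes \mathfrak n$.

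Passing to groups, let $N$ and $S$ be the connected Lie subgroups with Lie algebras $\mathfrak n$ and $\mathfrak s$. The homomorphism $N \to \bar N$ has discrete (hence finite) kernel $N \cap W$ since $\mathfrak n \cap \mathfrak w = 0$, and is surjective because its image has full Lie algebra $\bar{\mathfrak n}$. Simple connectedness of $\bar N$ then upgrades the cover $N \to \bar N$ to an isomorphism, so $N \cap W = 1$ and $N$ is closed, simply connected, nilpotent, and non-compact. Similarly $S = \pi^{-1}(\bar S)$ is closed with $W \subset S$ and $S \cap N \subset W \cap N = 1$. The product map $S \times N \to G$ is then a local diffeomorphism (by $\mathfrak g = \mathfrak s \oplus \mathfrak n$), injective by $S \cap N = 1$, with image closed under multiplication (as $N$ is normal in $G$); it is thus an open subgroup of the connected group $G$, hence equal to $G$, giving $G = S \ltimes N$. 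Finally, writing $\xi = sn$ with $s\in S$, $n\in N$, the relation $\bar\xi = \bar s \bar n$ in $\bar S \ltimes \bar N$ forces $\bar n = 1$, so $n \in N \cap W = 1$ and $\xi \in S$; then $\langle\xi\rangle$ is cocompact in $S$ since $W$ is compact and $\langle\bar\xi\rangle$ is cocompact in $S/W$, and $\mathrm{Ad}(\xi)$ strictly contracts $\mathfrak n$ by construction. This is precisely the Heintze structure on $G$.

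The main technical obstacle is the passage from the $\xi$-dependent Lie-algebra decomposition to an honest group-level semidirect product: one needs both that $N$ is closed rather than only immersed, and that $S \times N \to G$ is globally bijective. Both are forced by the simple connectedness of $\bar N$, which improves the a priori only finite kernel $N \cap W$ to the trivial subgroup.
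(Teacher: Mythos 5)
Your argument is correct, but it follows a genuinely different and more uniform route than the paper's. The paper argues by reducing, along a subnormal series of $W$, to the three cases where $W$ is semisimple, finite, or a circle, and treats each separately (the circle case being handled by a spectral splitting inside $\mathfrak{n}$ alone, after first invoking Lemmas \ref{coch} and \ref{hcoc} to arrange $S/W\simeq\R$); the strategy in each case is to make $W$ locally a direct factor. You instead perform the characteristic-subspace decomposition of $\textnormal{Ad}(\xi)$ on all of $\mathfrak{g}$ at once, using that the eigenvalues on $\mathfrak{w}$ have modulus one because the connected group $G$ acts on the compact normal subgroup $W$ through $\textnormal{Inn}(W_0)$, and you split off $N$ directly as the modulus-$<1$ part. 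This avoids the case analysis and the reduction lemmas entirely and produces the Heintze decomposition of $G$ explicitly; the price is a little more care at the group level. Two points deserve a word more than you give them: the closedness of $N$ is not a formal consequence of $N\to\bar N$ being an abstract isomorphism --- one should note that $NW=\pi^{-1}(\bar N)$ is closed and that the section $\bar N\to G$ inverting $\pi|_N$ is proper because $\pi$ is (as $W$ is compact), hence has closed image; and the identification $S=\pi^{-1}(\bar S)$ is cleaner taken as the \emph{definition} of $S$ (it is closed, has Lie algebra $\mathfrak{s}$, contains $W$, and meets $N$ trivially since $\bar S\cap\bar N=1$), which spares you from checking that the connected subgroup with Lie algebra $\mathfrak{s}$ contains all of the possibly disconnected $W$ --- as written, your sentence asserting $S=\pi^{-1}(\bar S)$ "similarly" is really only available after $G=SN$ has been established. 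With these adjustments the proof is complete and arguably shorter than the original.
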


\begin{proof}
First observe that the result is clear when $W$ is locally a direct factor. In general, we can suppose that $W$ is either semisimple, finite, or a circle. When $W$ is semisimple, as its outer automorphism group is discrete and $W$ has finite center, its centralizer is locally a direct factor. 

Suppose that $W$ is finite. In particular, $W$ centralizes $G_0$. Using Lemma \ref{allco}, write $G/W=(T/W\times K/W)\ltimes N/W$. As $T/W$ and $N/W$ are both simply connected, we have $T=T_0\times W$ and $N=N_0\times W$. So $G=TK\ltimes N$ is of Heintze type.

Finally, suppose that $W$ is a circle and that $G/W$ is of Heintze type. By Lemmas \ref{coch} and \ref{hcoc}, we can suppose that $G/W=S/W\ltimes N/W$ with $S/W\simeq\mathbf{R}$. As $S/W$ is one-dimensional, we can lift it to a one-parameter subgroup $T$ of $G$, so $G=T\ltimes N$ and $N\supset W$. 

Let $\xi\in T$ be an element contracting $\mathfrak{n}$. Consider the adjoint action of $\xi$ on $\mathfrak{n}$, and denote by $\mathfrak{c}$ be the sum of characteristic subspaces for eigenvalues of modulus $<1$. 
Since $\mathfrak{c}=\{x\in\mathfrak{n}|\lim_{n\to +\infty}\text{Ad}(\xi^n)x\to 0\}$, we see that $\mathfrak{c}$ is a Lie subalgebra.
Note that since $G/W$ is Heintze, $\mathfrak{c}$ projects onto $\mathfrak{n}/\mathfrak{w}$, that is, $\mathfrak{c}+\mathfrak{w}=\mathfrak{n}$. Clearly the intersection is trivial (since $\mathfrak{w}$ is central), so $\mathfrak{c}\oplus\mathfrak{w}=\mathfrak{n}$. Again using that $\mathfrak{w}$ is central, we obtain that this is a decomposition as a direct product of Lie subalgebras. Let $C$ be the Lie subgroup corresponding to $\mathfrak{c}$. The projection $C\to N/W$ inducing the isomorphism $\mathfrak{c}\to\mathfrak{n}/\mathfrak{w}$, it is a covering; as $G/W$ is simply connected, this is an isomorphism. In particular, $C$ is closed and nilpotent, simply connected. This implies in particular $C\cap W=1$. So $N$ is the topological direct product of $C$ and $W$. As $T$ normalizes both $C$ and $W$, we obtain that $W$ is a direct factor in $G$, hence clearly $G$ is of Heintze type.
\end{proof}

\begin{thm}\label{annu}
Let $G$ be a non-unimodular amenable Lie group with finitely many components, or a non-unimodular amenable linear algebraic group over a local field of characteristic zero. Assume that $G$ is not of Heintze type (Lie or non-Archimedean). Then $H^1_p(G)=0$ for all $p\ge 1$.
\end{thm}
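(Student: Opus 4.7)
The strategy is to exhibit in $G$ a triple $(\xi,Y,Z)$ satisfying the hypotheses of Proposition~\ref{lem:nonHeintze}, thereby concluding $H^1_p(G)=0$ for all $p\ge 1$. I begin by passing to the component of the identity: the unit component $G_0$ in the Lie case, the Zariski-connected component in the non-Archimedean case. This subgroup is open, normal, of finite index in $G$, and hence cocompact. Non-unimodularity descends because a continuous homomorphism $G\to\R_{>0}$ trivial on $G_0$ would factor through the finite quotient $G/G_0$ and therefore be trivial; moreover $\Delta_G|_{G_0}=\Delta_{G_0}$. By the contrapositive of Lemma~\ref{hcoc} (and its analogue over a non-Archimedean local field), $G_0$ inherits the non-Heintze property. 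A triple $(\xi,Y,Z)$ for $G_0$ with $Z$ characteristic in $G_0$ works also for $G$, since characteristic subgroups of $G_0$ are automatically normal in $G$.

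Next, I apply Proposition~\ref{MainLem}. In the non-Archimedean case, $G_0$ is already a connected linear algebraic group to which that proposition applies, and I choose $Z$ to be the center of the nilpotent radical in order to secure characteristicity. In the Lie case, $G_0$ need not be triangulable, so I first reduce further: since $G_0$ is amenable, the Zariski closure of $\textnormal{Ad}(G_0)$ in $\GL(\mathfrak g)$ is compact-by-triangulable, and I take $W\subset G_0$ to be the (compact, normal) preimage of the compact factor, so that $G_0/W$ is triangulable. By the contrapositive of Lemma~\ref{qtc}, $G_0/W$ is non-Heintze; then Proposition~\ref{MainLem} produces a triple $(\xi,Y,Z)$ in $G_0/W$ where $Z$ is the center of the nilpotent radical.

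Finally, I lift back to $G_0$. For any preimages $\tilde\xi,\tilde Y,\tilde Z$, one has $\Delta_G(\tilde\xi)>1$ since $W\subset\Ker\Delta_G$, and $\tilde Y\subset W_{\tilde\xi}$ since bounded orbits modulo a compact subgroup are bounded; moreover $\tilde Z$ is characteristic in $G_0$ and hence normal in $G$. The main obstacle is the centralization: the condition ``$Y$ centralizes $Z$ in $G_0/W$'' only yields $[\tilde Y,\tilde Z]\subset W$, not outright commutation. To upgrade this, I place $\tilde Y$ and $\tilde Z$ inside the preimage $\tilde N$ of the nilpotent radical of $G_0/W$ and invoke the structural analysis from the proof of Lemma~\ref{qtc}: breaking $W$ into semisimple, finite, and circle pieces, in each case the outer automorphism group is essentially discrete whereas $\tilde N_0$ is connected, so $W$ ends up central in $\tilde N_0$; the resulting central extension can then be arranged — after possibly shrinking $\tilde Y$ or $\tilde Z$ to finite-codimension subgroups centralizing each other — to give genuine commutators in $G_0$, not only modulo $W$. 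Proposition~\ref{lem:nonHeintze} then applies to $G$ and delivers $H^1_p(G)=0$.
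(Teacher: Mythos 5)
There is a genuine gap, and it is located exactly where the paper itself plants a warning sign. Your reduction to the triangulable case rests on the claim that, $G_0$ being amenable, one can find a \emph{compact} normal $W\subset G_0$ with $G_0/W$ triangulable, by taking $W$ to be the preimage of the compact factor of the Zariski closure of $\textnormal{Ad}(G_0)$. This preimage is almost never compact: it contains the kernel of $\textnormal{Ad}$ (the center) and every one-parameter subgroup acting by rotations. The Remark following Proposition~\ref{MainLem} gives the decisive counterexample: $G=\R^2\ltimes\C^2$ with $(r,\theta)\cdot(z_1,z_2)=r(e^{i\theta}z_1,e^{\pi i\theta}z_2)$ is connected, solvable, non-unimodular, non-Heintze, simply connected (hence has \emph{no} nontrivial compact subgroups at all, so the only choice is $W=1$), is not triangulable, and --- as the paper states explicitly --- does \emph{not} satisfy the hypotheses of Proposition~\ref{lem:nonHeintze}. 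So no strategy that produces a triple $(\xi,Y,Z)$ inside $G$ itself, or inside a quotient of $G$ by a compact normal subgroup, can succeed in general. The missing idea is the quasi-isometry invariance of $H^1_p$ (Appendix~\ref{qii}): the paper passes to the cocompact radical $G_1$, quotients by its maximal compact normal subgroup, and then invokes the \emph{trigshadow} construction of \cite[Lemma~2.4]{Cor}, which supplies cocompact normal inclusions $G_2\subset G_3\supset G_4$ with $G_4$ triangulable --- crucially, $G_4$ is neither a subgroup nor a quotient of $G$, only a group quasi-isometric to it. Lemmas~\ref{coch}, \ref{hcoc}, \ref{qtc} then track the non-Heintze property through these moves, and Proposition~\ref{MainLem} is applied to $G_4$, not to $G$.

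A secondary weakness: even granting a compact $W$ with $G_0/W$ triangulable, your final lifting step is not a proof. You correctly identify that a triple in $G_0/W$ only gives $[\tilde Y,\tilde Z]\subset W$ upstairs, but the proposed repair (``shrinking $\tilde Y$ or $\tilde Z$ to finite-codimension subgroups centralizing each other'') is unsubstantiated --- a central extension by a circle can fail to admit such commuting lifts (Heisenberg-type obstructions). This entire difficulty evaporates once one uses quasi-isometry invariance, since $G_0\to G_0/W$ is a quasi-isometry and one may simply work downstairs. In short: the architecture of your argument (verify Proposition~\ref{lem:nonHeintze} in $G$ itself) cannot be made to work for all the groups covered by the theorem; the quasi-isometric replacement of $G$ is not an optional convenience but the essential mechanism.
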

\begin{proof}
If $G$ is triangulable or non-Archimedean, this is Proposition \ref{MainLem}. The remaining of the proof concerns connected Lie groups, and consists in reducing to the triangulable case, using the fact that the $L^p$-cohomology is a quasi-isometry invariant (see Appendix \ref{qii}).

Assume that $G$ is an amenable Lie group with $\pi_0(G)$ finite and $G$ not of Heintze type. Then the radical $G_1$ of $G$ is cocompact and by Lemma \ref{hcoc} is not of Heintze type. Let $W$ be the maximal normal compact subgroup in the connected solvable Lie group $G_1$. By Lemma \ref{qtc}, $G_2=G_1/W$ is not of Heintze type. Note that $G_2'$ is simply connected. By the {\it trigshadow} construction \cite[Lemma~2.4]{Cor}, there exist connected Lie groups with normal cocompact inclusions $G_2\subset G_3\supset G_4$ with $G_4$ triangulable. By Lemmas \ref{coch} and \ref{hcoc}, $G_4$ is not of Heintze type and we can conclude by Proposition \ref{MainLem}, using that cocompact inclusions are quasi-isometries.
\end{proof}

\subsection{The case of non-amenable Lie or $p$-adic groups}

\begin{lem}\label{nonaLIE}
Let $G$ be a connected non-amenable Lie group which is not of rank-one type. Let $W$ be the maximal compact normal subgroup in $G$. Then $G$ has a cocompact subgroup $L$, containing $W$, such that $L/W$ is a simply connected solvable Lie group which is not of Heintze type.  
\end{lem}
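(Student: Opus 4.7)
The plan is to reduce first to $W = 1$: since the conditions on $L/W$ correspond to conditions on the quotient $\bar L := L/W$ in $\bar G := G/W$, and preimages under $\pi : G \to \bar G$ of cocompact subgroups of $\bar G$ are cocompact in $G$ and automatically contain $W$, it suffices to produce a cocompact, simply connected, solvable, non-Heintze subgroup $\bar L$ of $\bar G$. After this reduction $\bar G$ is a connected non-amenable Lie group with no non-trivial compact normal subgroup.

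I would construct $\bar L$ via Iwasawa. Take a Levi decomposition $\bar G = S \ltimes \bar R$ with $S$ a non-compact semisimple Levi, and an Iwasawa decomposition $S = K_S A N_S$, so $A$ is a maximal $\R$-split torus of rank $r = \mathrm{rk}_\R(S) \ge 1$ and $AN_S$ is simply connected. Inside the connected solvable radical $\bar R$, choose an $AN_S$-stable, closed, simply connected, cocompact subgroup $\bar R_1$ (via an equivariant Mostow splitting of $\bar R$; triviality of $W$ ensures no obstruction to trivialising the compact part). Set $\bar L = AN_S \cdot \bar R_1$: a closed solvable subgroup, cocompact because $\bar G / \bar L$ is covered by $K_S \times (\bar R / \bar R_1)$, and simply connected as a product of simply connected pieces.

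The crux is showing $\bar L$ is not of Heintze type. Suppose for contradiction $\bar L = \Sigma \ltimes N$ is in Heintze form. By Lemma \ref{ncar}, $N$ is the exponential radical of $\bar L$, equal to the sum of the non-zero $\mathrm{Ad}(A)$-weight subspaces of $\bar{\mk{l}}$; by Lemma \ref{allco}, $\bar L / N$ is one-dimensional modulo a compact subgroup. Since $\mathrm{ad}(\mk{a})$ vanishes on $\mk{a}$, we have $\mk{a} \cap \mk{n} = 0$, so $A$ injects into $\bar L / N$, forcing $\dim(\bar L/N) \ge r$. Thus if $r \ge 2$ — i.e.\ $S$ has real rank at least two, or more than one non-compact simple factor — we have a contradiction immediately.

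The remaining case is $r = 1$, in which the assumption that $\bar G$ is not of rank-one type forces $\bar R \ne 1$. The adjoint action of $A$ on $\bar{\mk{r}}_1$ is a finite-dimensional $\mk{s}$-representation, whose $\mk{a}$-weights are symmetric under the rank-one Weyl group $\Z/2$. If zero occurs as a weight, then $\bar{\mk{r}}_1^A \ne 0$ contributes alongside $\mk{a}$ to $\bar L / N$, yielding $\dim(\bar L/N) \ge 2$, a contradiction. Otherwise all weights are non-zero but come in $\pm$-pairs, so any element of $\Sigma \cap A$ acts with both contracting and expanding eigenvalues on $\bar{\mk{r}}_1 \subset N$, contradicting that $\Sigma$ contracts $N$. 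The main obstacles are constructing the simply connected $\bar L$ — which requires an $AN_S$-equivariant Mostow splitting of $\bar R$ — and correctly identifying the exponential radical in the rank-one mixed case in order to apply the Weyl-group symmetry.
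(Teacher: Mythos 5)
Your finite-center case is essentially the paper's argument in different clothing. The paper also reduces to $W=1$, extracts a $G$-normal, simply connected, cocompact ``vector part'' $V$ of the radical (the preimage in $R$ of the vector summand of $R/R'$ --- this is the concrete form of your equivariant Mostow splitting, and it must be $G$-equivariant, not merely $AN_S$-equivariant, for any symmetry of weights to be available), and rules out the Heintze structure by noting that a contracting element would scale the Haar measure of $V$ non-trivially, while the scaling character factors through $G/V$, which has compact abelianization. Your Weyl-symmetry of the $\mk{a}$-weights is the same fact in disguise (the weights sum to zero, so $\det\mathrm{Ad}|_{V}$ is trivial on $A$); note, however, that the contracting element $\xi$ of a putative Heintze structure need not lie in $A$, and $\Sigma\cap A$ may well be trivial, so the contradiction should be phrased via the modular character of the action on $V$ (a homomorphism, hence blind to the $N$-component of $\xi$) rather than via eigenvalues of an element of $\Sigma\cap A$.

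The genuine gap is the case where the Levi factor has infinite center, and passing to $G/W$ does not remove it. Take $G=\widetilde{\SL_2(\R)}$: it is connected, non-amenable, not of rank-one type in the sense of Definition \ref{dco} (being connected, its only finite-index subgroup is itself, and it is not a quotient of a rank-one simple group by its center), and it has $W=1$. Yet its Iwasawa $K$-part is a line, so $G/AN_S\cong\R$ is non-compact and your $\bar L$ is not cocompact; in fact every connected solvable subgroup of $\widetilde{\SL_2(\R)}$ has dimension at most two and none is cocompact, so no construction internal to $G$ can produce the desired $L$ (and in mixed examples the Levi factor need not even be closed, so the semidirect product $S\ltimes\bar R$ you start from is not available). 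The paper isolates exactly this situation ($M$ compact or $G/M$ with infinite center) and handles it by a different mechanism: Lemmas 2.4 and 6.7 of \cite{Cor} produce cocompact inclusions $G/W\supset G_1\subset G_2$ with $G_2$ simply connected solvable, and $G_2$ is not of Heintze type because it contains a quasi-isometrically embedded copy of $\Z^2$ coming from the infinite center or from a rank-two flat. Your proposal has no counterpart to this step, and without it the lemma is not proved.
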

\begin{proof}Let $M$ be the connected amenable radical of $G$.

Assume that $M$ is not compact and $G/M$ has finite center.
Let $R$ be the radical of $G$ ($R$ is cocompact in $M$). Modding out if necessary, we can suppose that $W=1$. So the derived subgroup $R'$ of $R$ is simply connected. Consider the action by conjugation of $G/R$ on $R/R'$. As $G/R$ is semisimple, using the action on the universal covering, we obtain that $R/R'$ decomposes, under the action of $G$, as the direct sum of its maximal compact subgroup, and some vector space $V_0$. Let $V$ be the inverse image of $V_0$ in $R$. So $V$ is normal in $G$, simply connected, and cocompact in $M$. The group $G/V$ is the direct product, up to some normal finite subgroup, of a compact group and a semisimple group with finite center. Therefore it has a simply connected solvable cocompact subgroup; let $P$ be its inverse image in $G$, which is a simply connected solvable cocompact subgroup of $G$. Assume by contradiction that $P$ is Heintze. We know that the derived subgroup of $P/V$ has codimension at least one in $P/V$. As the Heintze assumption implies that $P'$ has codimension one, this forces $P'$ to contain $V$. As $P$ is Heintze, it contains an element $\xi$ contracting $P'$. In particular, $\xi$ contracts $V$. If $g\in G$, let the action of $g$ by conjugation on $V$ multiply the Haar measure by $q(g)$. So, using that $V$ is non-compact, $q(\xi)\neq 1$. Because of the existence of this contraction, $V\subset G'$ and in particular $V\subset\textnormal{Ker}(q)$, i.e.~$q$ factors through $G/V$. Now $G/V$ has compact abelianization because it is compact-by-semisimple. So $q$ is trivial, a contradiction.

Assume that either $M$ is compact or $G/M$ has infinite center. If $M$ is compact, then by assumption $G/M$ has rank at least two, so $G/M$ either has infinite center or has rank at least two. Then by Lemmas 2.4 and 6.7 in \cite{Cor}, there exist cocompact inclusions $$G/W\supset G_1\subset G_2$$ with $G_2$ a solvable and simply connected Lie group. The assumptions imply that $G/M$ contains $\mathbf{Z}^2$ as quasi-isometrically embedded subgroup this copy can be lifted to a Levi factor of $M$ in $G/W$, so $G/W$ also contains a quasi-isometrically embedded subgroup isomorphic to $\mathbf{Z}^2$. 
Therefore $G_2$ contains a quasi-isometrically embedded copy of $\mathbf{Z}^2$ as well (actually, it follows from the construction in \cite{Cor} that it can be realized as a subgroup, even if we can bypass it), so is not of Heintze type.
\end{proof}

If $G$ is a linear algebraic group over a local field $\mathbf{K}$, the rank (or $\mathbf{K}$-rank) of $G$ is the least $k$ such that $G$ contains a $\mathbf{K}$-split torus of rank $k$.

\begin{lem}\label{nau}
Let $G$ be a connected linear algebraic group over a local field $\mathbf{K}$ of characteristic zero. Suppose that $G$ is not amenable, and is not reductive of rank one.
Then $G$ contains a cocompact subgroup of the form $DU$ ($D$ split torus, $U$ unipotent) which is not of non-Archimedean Heintze type.
\end{lem}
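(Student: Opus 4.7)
The plan is to construct the cocompact subgroup $DU$ from the minimal parabolic of the reductive Levi of $G$, and then rule out non-Archimedean Heintze type by a case analysis on the $\K$-rank of $G$ and on the $A$-centralizer inside the unipotent radical. First I would write the Levi decomposition $G=L\ltimes R_u(G)$ with $L$ reductive and $R_u(G)$ the unipotent radical, pick a maximal $\K$-split torus $A\subset L$, and consider a minimal parabolic $P_L$ of $L$ containing $A$. Writing $Z_L(A)=M\cdot A$ with $M$ anisotropic (hence compact, as $\K$ is non-Archimedean of characteristic $0$), $P_L=M\cdot A\cdot N$ where $N$ is the unipotent radical of $P_L$. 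Then $P=P_L\ltimes R_u(G)$ is a minimal parabolic of $G$, cocompact in $G$ because $G/P$ is compact by standard parabolic theory, and since $M$ is compact, the semidirect product $DU:=A\ltimes(N\cdot R_u(G))$---with $D=A$ and $U=N\cdot R_u(G)$ (unipotent since $N$ and $R_u(G)$ are unipotent, $N$ normalises $R_u(G)$, and $L\cap R_u(G)=1$)---is cocompact in $G$.

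To see that $DU$ is not of non-Archimedean Heintze type I would split into three cases. \emph{Case (a):} if $\dim A\ge 2$, the continuous surjection $DU\to D\cong(\K^*)^{\dim A}$ composed with the valuation yields a continuous surjection onto $\Z^{\dim A}$, so $\Hom^{\mathrm{cont}}(DU,\Z)$ has rank $\ge 2$. But for a Heintze group $S\ltimes N'$ with $\Z\subset S$ cocompact and contracting $N'$, any continuous $\varphi:N'\to\Z$ is $\Z$-invariant, so $\varphi(n)=\varphi(\xi^k n\xi^{-k})$ lies in the finite set $\varphi($vacuum set$)$ for every $n$, forcing $\varphi=0$; hence $\Hom^{\mathrm{cont}}(S\ltimes N',\Z)\cong\Hom^{\mathrm{cont}}(S,\Z)$ has rank $\le 1$, a contradiction. \emph{Case (b):} if $\dim A=1$ and $R_u(G)^A\ne 1$, let $U_+$ be the closed $A$-stable normal subgroup of $U$ generated by the non-zero $A$-weight subspaces; since $N$ has only positive $A$-weights, $U^A=R_u(G)^A$ and $DU/U_+\cong D\times R_u(G)^A$. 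Any non-trivial $\K$-unipotent group has underlying topology $\K^m$-additive, which over non-Archimedean $\K$ is not compactly generated (every compactly generated subgroup of $\K$ is compact); so $DU/U_+$ is not compactly generated. But Heintze groups and their Hausdorff quotients are compactly generated, contradiction. \emph{Case (c):} if $\dim A=1$ and $R_u(G)^A=1$, then $G$ non-reductive (forced by the hypothesis, since $\mathrm{rank}_\K G=1$) gives $R_u(G)\ne 1$; the non-trivial element of the relative Weyl group $\Z/2$ of $A$, coming from the non-compact rank-one simple factor of $L^{\mathrm{ss}}$ (present by non-amenability) and normalising $R_u(G)$, acts as $-1$ on $X^*(A)$, so the $A$-weights on $R_u(G)$ occur in pairs $\pm\lambda$, all non-zero by assumption. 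Hence $A$-weights on $U$ include positive ones (from $N$) and negative ones (from $R_u(G)$). In this subcase $DU$ is compactly generated, and its exponential radical---the smallest closed normal subgroup with polynomial-growth quotient---equals $U$ (since $U^A=1$). In any Heintze decomposition $DU\cong S\ltimes N'$ the exponential radical must equal $N'$, so $N'=U$ and $S\cong DU/U\cong A$. But then the cocompact $\Z\subset A$ should contract $U$, which is impossible since $A$-weights on $U$ are of both signs.

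The main obstacle is Case (c): one has to identify the exponential radical of $DU$ with $U$ so as to pin down the hypothetical Heintze decomposition as $A\ltimes U$, and then exploit the Weyl-group symmetry of $A$-weights on the $L^{\mathrm{ss}}$-module $R_u(G)$---a consequence of the non-compact rank-one factor of $L^{\mathrm{ss}}$ having Weyl group $\Z/2$---to produce weights of both signs obstructing the contraction. Case (b) is easier and relies on the non-Archimedean fact that additive groups of type $\K^m$ are not compactly generated; Case (a) is essentially bookkeeping about $\Hom^{\mathrm{cont}}(\cdot,\Z)$.
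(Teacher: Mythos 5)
Your construction of the cocompact subgroup $DU$ (strip the anisotropic part $M$ from a minimal parabolic and keep $A\ltimes(N\cdot R_u(G))$) is exactly the paper's, and your Case (a) correctly supplies a detail the paper only asserts. The problem is Case (b). You claim $DU/U_+\cong D\times R_u(G)^A$, where $U_+$ is the closed normal subgroup of $U$ generated by the nonzero $A$-weight subgroups. But $U_+$ can swallow the zero-weight part: $[\mk{u}_\lambda,\mk{u}_{-\lambda}]\subset\mk{u}_0$, so the ideal generated by the nonzero weight spaces may be all of $\mk{u}$ even though $\mk{u}_0\ne 0$. Concretely, take $G=\SL_2\ltimes\mk{sl}_2$ with the adjoint action: here $R_u(G)^A$ is the Cartan line, so you are in Case (b), yet $[e,F]=H$ forces $U_+=U$, so $DU/U_+\cong D$ is compactly generated and your argument proves nothing for this $G$. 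The underlying issue is that your dichotomy is the wrong one: the natural split in rank one is whether $A$ acts trivially on $R_u(G)$ or not. If trivially, then $[\mk{n},\mk{r}_u]=0$, $U=N\times R_u(G)$, and $DU/N\cong D\times R_u(G)$ really is not compactly generated; if not, Weyl symmetry gives weights of both signs and your Case (c) mechanism applies. Even then, Case (c) rests on two asserted steps: that the exponential radical of $DU$ is $U$ (your justification ``since $U^A=1$'' is unavailable once zero weights occur), and that in any Heintze decomposition $S\ltimes N'$ the contracted group $N'$ must equal the exponential radical --- the inclusion $N'\subset$ (exponential radical) needs an argument (a noncompact contracted subgroup cannot survive into a polynomial-growth quotient).

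For comparison, the paper disposes of the whole rank-one, non-reductive case with a single modular-function argument. Set $N=R_u(G)$ and let $q(g)$ be the factor by which conjugation by $g$ multiplies the Haar measure of $N$. Since $N$ is unipotent and $G/N$ is non-abelian reductive of rank one, neither admits a nontrivial homomorphism to $\R$, so $q\equiv 1$: all of $G$ preserves the Haar measure of $N$. A Heintze structure on $DU$ would force the contracted subgroup to contain $N$ (essentially your exponential-radical step, which is needed in either approach) and hence produce an element scaling the Haar measure of the noncompact group $N$ nontrivially --- a contradiction that covers both of your cases (b) and (c) at once. You should either adopt that argument or reorganize your dichotomy as indicated above.
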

\begin{proof}This is a simplified analog of the proof of Lemma \ref{nonaLIE}, so we only sketch. The hypotheses mean that either $G$ has rank at least two, or is not reductive.

Let $N$ be the unipotent radical of $G$. The reductive group $G/N$ has a cocompact subgroup of the form $DU'$ with $D$ a split torus and $U'$ unipotent. Let $P=DU$ be the inverse image of this subgroup in $G$. If $G$ has rank at least two, so does $P$, so that $P$ is not of non-Archimedean Heintze type. Otherwise, since we assume that $G$ is not amenable, $G/N$ is non-abelian reductive of rank one. In particular, $G/N$ has no non-trivial homomorphism to $\mathbf{R}$, as well as $N$. Therefore the action of $G$ on $N$ by conjugation preserves the Haar measure. As in the Proof of Lemma \ref{nonaLIE}, and as $N$ is non-trivial, this prevents $P$ from being a non-Archimedean Heintze group.
\end{proof}

\begin{prop}
Let $G$ be either a connected Lie group, or a linear algebraic group over a local field of characteristic zero. Suppose that $G$ is not amenable, and not of rank-one type. Then $H^1_p(G)=0$ for all $p\ge 1$. 
\end{prop}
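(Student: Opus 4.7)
The plan is to reduce to the amenable solvable setting via a cocompact subgroup that is not of Heintze type, and then invoke the previously-established vanishing theorems. Since $G$ is non-amenable (hence non-compact), the dichotomy recalled in the introduction gives $H^1_p(G)=\overline{H^1_p}(G)$ for every $p\ge 1$; it therefore suffices to show $\overline{H^1_p}(G)=0$.

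Next, I would split into the Lie and non-Archimedean cases and apply the structural lemmas already in place. In the Lie setting, let $W$ denote the maximal normal compact subgroup of $G$; Lemma \ref{nonaLIE} furnishes a cocompact subgroup $L\supset W$ with $H:=L/W$ a simply connected solvable Lie group that is not of Heintze type. In the non-Archimedean setting, Lemma \ref{nau} furnishes a cocompact subgroup $H=DU$ of $G$ (with $D$ a split torus and $U$ unipotent) that is not of non-Archimedean Heintze type. In either case $H$ is amenable (solvable) and quasi-isometric to $G$: the cocompact inclusion $L\hookrightarrow G$ (resp.~$H\hookrightarrow G$) is a quasi-isometry, and in the Lie case the projection $L\twoheadrightarrow L/W$ by a compact normal subgroup is also a quasi-isometry. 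By the quasi-isometry invariance of the reduced first $L^p$-cohomology (Appendix \ref{qii}), $\overline{H^1_p}(G)=\overline{H^1_p}(H)$.

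Finally I would conclude by splitting on the modular function of $H$: if $H$ is non-unimodular, Theorem \ref{annu} applies (since $H$ is amenable and not of Heintze type) and yields $H^1_p(H)=0$, hence $\overline{H^1_p}(H)=0$, for every $p\ge 1$; if $H$ is unimodular, Theorem \ref{main}(1) applies and yields $\overline{H^1_p}(H)=0$ for every $p>1$. In all cases $\overline{H^1_p}(H)=0$, whence $H^1_p(G)=\overline{H^1_p}(G)=0$.

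The substantive work has really been done upstream: Lemmas \ref{nonaLIE} and \ref{nau} produce the crucial non-Heintze cocompact amenable subgroup, and Theorem \ref{annu} delivers the non-unimodular vanishing. The main obstacle at this stage is simply verifying that the quasi-isometry reductions and case split are legitimate, which is routine once those inputs are in place.
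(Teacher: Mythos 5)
Your overall route is exactly the paper's: apply Lemmas \ref{nonaLIE} and \ref{nau} to produce a cocompact (hence quasi-isometric) solvable non-Heintze group, invoke the quasi-isometry invariance of Appendix \ref{qii}, and conclude by Theorem \ref{annu}. The one place you diverge is the case split on the modular function of the reduced cocompact subgroup $H$, and that branch contains a genuine gap: in the unimodular case you appeal to Theorem \ref{main}(1), which only yields $\overline{H^1_p}(H)=0$ for $p>1$, so the claim at $p=1$ is left unproved there (and this is not a citation artifact: for instance $\overline{H^1_1}(\Z)\neq 0$, so no vanishing statement of that kind is available at $p=1$ for unimodular amenable groups).

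The correct repair is to show that the unimodular branch is vacuous, which is what the paper tacitly assumes when it applies Theorem \ref{annu} (whose hypotheses require non-unimodularity). Since $G$ is non-amenable, hence non-compact, $G$ satisfies the Sobolev inequality recalled in the introduction, equivalently $H^1_p(G)$ is Hausdorff (\cite[Proposition~11.9]{T3}). The quasi-isometry invariance theorem of Appendix \ref{qii} provides a \emph{topological} isomorphism $H^1_p(G)\simeq H^1_p(H)$, and Hausdorffness is preserved under such an isomorphism; but if $H$ were unimodular, amenable and non-compact, then $H^1_p(H)$ would be non-Hausdorff by the same dichotomy. Hence $H$ is necessarily non-unimodular (one can also verify this directly on the groups produced by Lemmas \ref{nonaLIE} and \ref{nau}), and Theorem \ref{annu} applies in all cases, giving $H^1_p(H)=0$ and therefore $H^1_p(G)=0$ for all $p\ge 1$. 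With this observation inserted, your argument is complete and coincides with the paper's.
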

\begin{proof}The hypotheses exactly mean that Lemmas \ref{nonaLIE} and \ref{nau} do apply. So $G$ is quasi-isometric to a non-Heintze simply connected solvable Lie group, or non-Heintze triangulable group over a local field of characteristic zero. Again using the quasi-isometry invariance of the $L^p$-cohomology (Appendix \ref{qii}), the result then follows from Theorem \ref{annu}.
\end{proof}

\section{The $L^p$-cohomology of Heintze groups}\label{MainThmSection}

In this section, we prove (2) and ($2'$) of Theorem \ref{main}.

We consider semidirect products $D\ltimes N$ with the following convention: $D$ acts on $N$ on the right, and the group law is
given by
$$(d_1,n_1)(d_2,n_2)=(d_1d_2,(n_1\cdot d_2)n_2).$$

Suppose that $D$ is discrete. Then it is checked at once that if
$\lambda_0$ is a left Haar measure on $N$, and $\delta$ is the
counting measure on $D$, then $\lambda=\delta\otimes\lambda_0$ is
a left Haar measure on $G=D\ltimes N$.

Suppose now that $u$ is a continuous function on $G$ that is invariant for
the right-regular action of $D$ (i.e.~$u(g\cdot d)=u(g)$ for all
$g\in G$, $d\in D$). Then, if we set, for $g\in N$, $v(g)=u(1,g)$,
we have $u(d,n)=v(n\cdot d^{-1})$ for all $(d,n)\in D\ltimes N$.

\begin{lem}If $D$ is infinite and $v$ is not constant, then $u$ is not in $\mathbf{R}+L^p(G)$.\end{lem}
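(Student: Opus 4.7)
The plan is to deduce the lemma as a direct corollary of Lemma \ref{lem:inv}(1). I would argue by contradiction: suppose $u = c + f$ with $c \in \R$ constant and $f \in L^p(G)$, and aim to show $v$ must be constant.

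The first step is to propagate the $D$-invariance from $u$ to $f$. By hypothesis $u$ is invariant under the right-regular action of $D$, i.e.\ $\rho(d)u = u$ for every $d \in D$, and the constant function $c$ is trivially $\rho(D)$-invariant. Hence $\rho(d)f = \rho(d)u - \rho(d)c = u - c = f$ for all $d \in D$, so $f$ is $\rho(D)$-invariant.

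Next I would identify $D$ as a non-compact closed subgroup of $G = D \ltimes N$: since $D$ is discrete, its natural copy in $G$ is closed, and since $D$ is infinite it is non-compact. Lemma \ref{lem:inv}(1), applied to the function $f \in L^p(G)$ with $H = D$, then forces $f = 0$ almost everywhere. Consequently $u$ agrees almost everywhere with the constant $c$; since $u$ is continuous, $u \equiv c$, and in particular $v(n) = u(1,n) = c$ for every $n \in N$, contradicting the assumption that $v$ is non-constant.

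There is no substantive obstacle here; the argument is essentially bookkeeping once Lemma \ref{lem:inv}(1) is in hand. The only point worth emphasizing is that one should resist the temptation to attack the problem computationally by expanding $\int_G |u - c|^p\, d\lambda$ as $\bigl(\sum_{d \in D} \mathrm{mod}(d)\bigr) \|v - c\|_p^p$, because without further hypotheses one has no control on the divergence of $\sum_{d \in D}\mathrm{mod}(d)$. The invariance argument sidesteps this issue entirely.
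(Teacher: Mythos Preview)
Your proof is correct and in fact cleaner than the paper's. The paper argues directly: since $v$ is continuous and non-constant, one finds disjoint sets $A_1,A_2\subset N$ of positive measure on which the values of $v$ are separated by some $\eps>0$; translating these by the infinitely many $d\in D$ for which the action on $N$ does not contract the Haar measure produces sets $B_1,B_2\subset G$ of infinite measure on which the values of $u$ remain $\eps$-separated, and this is incompatible with $u\in\R+L^p(G)$.

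Your route---subtracting the constant and applying Lemma~\ref{lem:inv}(1) to the $\rho(D)$-invariant function $f=u-c\in L^p(G)$---short-circuits all of this by recycling a result already in hand. What the paper's direct argument buys is self-containment at this point and an explicit display of the mechanism (the ``dilating'' half of $D$); what your argument buys is brevity and the observation that the semidirect-product structure plays no role, only the non-compactness of $D$ as a closed subgroup. Your closing remark about the computational pitfall is reasonable, though note that the paper's own argument sidesteps the same issue differently, by restricting to $D_+$ where each term of the relevant sum is bounded below.
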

\begin{proof}
For some $\varepsilon>0$, there exist two disjoint measurable subsets of positive measure $A_1,A_2$ of $N$ such that for every $(a_1,a_2)\in A_1\times A_2$, $v(a_1)\le v(a_2)-\varepsilon$. As $D$ is infinite, the subset $D_+$ of $D$ consisting of elements such that the automorphism $g\mapsto g\cdot d$ of $N$ dilates the measure, is infinite. If $d\in D_+$ and $a\in A_i$, then $u(d,a_i\cdot d)=v(a_i)$. Therefore for every $(b_1,b_2)$ with $$b_i\in B_i=\bigcup_{d\in D_+} \{d\}\times(A_i\cdot d) ,$$
we have $u(b_1)\le u(b_2)-\varepsilon$. As $B_1,B_2$ both have infinite measure, this implies that $u\notin \mathbf{R}+L^p(G)$.
\end{proof}

\begin{lem}
Set, for $g\in N$, $b(g)=v-\rho(g)v$. Then $u\in D^p(G)$ if and
only if $v\in D^p(N)$ and
\begin{equation}\sum_{d\in D}\Delta(d)\|b(g\cdot
d^{-1})\|^p_p\label{eq:b}\end{equation}
is finite for all $g\in N$ and tends to 0 when $g\to 1$.
\end{lem}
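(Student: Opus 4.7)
The plan is to compute $\|u-\rho(d,n)u\|_p$ explicitly on $G=D\ltimes N$ and to reduce the whole question to a statement in the $N$-direction only. The key observation is that right-$D$-invariance of $u$ forces $\rho(d,1)u=u$ for every $d\in D$; combining this with the cocycle identity gives $u-\rho(d,n)u=\rho(d,1)(u-\rho(1,n)u)$, so
$$\|u-\rho(d,n)u\|_p=\Delta(d)^{-1/p}\,\|u-\rho(1,n)u\|_p.$$
Since $D$ is discrete, $u\in D^p(G)$ is thus equivalent to $\|u-\rho(1,n)u\|_p<\infty$ for every $n\in N$ together with continuity of $n\mapsto u-\rho(1,n)u$ at $n=1$, continuity elsewhere in $N$ following from the cocycle identity on $N$.

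Next I would establish a pointwise formula. Using $u(d_0,n_0)=v(n_0\cdot d_0^{-1})$, the semidirect group law $(d_0,n_0)(1,n)=(d_0,n_0n)$, and the fact that $\cdot d_0^{-1}$ is an automorphism of $N$, a direct expansion gives
$$(u-\rho(1,n)u)(d_0,n_0)=b(n\cdot d_0^{-1})(n_0\cdot d_0^{-1}).$$
I would then take the $p$-th power, integrate against $\delta\otimes\lambda_0$, and change variables $m=n_0\cdot d_0^{-1}$ inside the $N$-integral. The Jacobian is the dilation factor of $\lambda_0$ under the automorphism $n\mapsto n\cdot d$ of $N$, and a short Fubini computation on $G$ identifies this factor with $\Delta(d)$ (one tests the defining identity $\int f(xd)\,d\lambda(x)=\Delta(d)^{-1}\int f\,d\lambda$ on functions of the form $f(d_0,n_0)=\varphi(d_0)\psi(n_0)$). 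This gives
$$\|u-\rho(1,n)u\|_p^p=\sum_{d\in D}\Delta(d)\,\|b(n\cdot d^{-1})\|_p^p,$$
which is precisely \eqref{eq:b}.

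Combining the two steps, $u\in D^p(G)$ is equivalent to the sum above being finite for every $n\in N$ and tending to $0$ as $n\to 1$. The $d=1$ term of this sum is $\|b(n)\|_p^p$, so the stated condition $v\in D^p(N)$ is automatically contained in \eqref{eq:b} and is listed in the statement only for emphasis. The only mildly delicate point of the argument is the identification of the change-of-variable Jacobian with the restriction of $\Delta$ to $D$; the rest is straightforward manipulation with the cocycle identity and the semidirect group law.
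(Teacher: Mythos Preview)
Your proof is correct and follows essentially the same route as the paper: both reduce the question to computing $\|u-\rho(1,n)u\|_p^p$ as a sum over $D$ via the pointwise identity $(u-\rho(1,n)u)(d_0,n_0)=b(n\cdot d_0^{-1})(n_0\cdot d_0^{-1})$, and both identify the change-of-variable Jacobian with $\Delta_G(d)$. The only cosmetic difference is organization: the paper restricts $u$ to the slices $\{d\}\times N$, writing $v_d(n)=u(d,n)$ and summing $\|v_d-\rho(g)v_d\|_p^p$, whereas you first invoke the $D$-invariance $\rho(d,1)u=u$ and the cocycle identity to reduce to $g\in N$; the resulting computation is identical.
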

\begin{proof}
For $g\in G$, set $B(g)=u-\rho(g)u$. To check $B(g)\in L^p(G)$ for all $g\in G$, it is enough to check it for $g$ ranging over a generating subset of $G$. As $B(g)=0$ for $g\in D$, it is enough to check it for $g\in N$.

For fixed $d\in D$, we have
$v_d(n):=u(d,n)=v(n\cdot d^{-1})$, and the condition is that
$$\sum_{d\in D}\|v_d-\rho(g)v_d\|_p^p<\infty$$
for all $g\in N$.

Now $(v_d-\rho(g)v_d)(n)=b(gd^{-1})(nd^{-1})$, so
that $$\left(\textnormal{reminder: }\int
f(gh)dg=\Delta(h)^{-1}f(g)dg\right)$$
$$\|v_d-\rho(g)v_d\|^p=\Delta(d)\|b(g\cdot
d^{-1})\|^p_p.$$
Thus, for $g\in N$ we have $$\|B(g)\|_p^p=\sum_{d\in D}\Delta(d)\|b(g\cdot
d^{-1})\|^p_p\qedhere$$
\end{proof}

Let us now specify to the case when $D$ is cyclic and generated by
an element $\xi$ satisfying $\delta=\Delta(\xi)>1$.

Let $W$ be the largest compact normal subgroup in the unit component $H_0$ of $H$, look at the eigenvalues of $\xi$ on the Lie algebra on the non-trivial simply connected Lie group $N_0/W$, and define $\lambda$ as the minimal modulus of its (complex) eigenvalues.

\begin{thm}\label{mixx}
Consider a locally compact group $G=S\ltimes H$. Assume that some cyclic cocompact subgroup of $S$ contracts $H$, and that $H_0$ is non-compact. Define $\delta,\lambda$ as above, and $G=\mathbf{Z}\ltimes H$. Then $H^1_p(G)=0$ if and only if
$$p\le p_0(G):=\log(\delta)/\log(\lambda).$$
\end{thm}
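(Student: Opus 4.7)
The strategy is to translate the computation of $H^1_p(G)$ into a property of functions on $H$, using the three preparatory lemmas of Section~\ref{MainThmSection}. First I reduce to $S = \Z = \langle\xi\rangle$: the inclusion $\Z\ltimes H \hookrightarrow S\ltimes H$ is cocompact, hence a quasi-isometry, and $L^p$-cohomology is quasi-isometry invariant (Appendix~\ref{qii}). Since $\delta = \Delta(\xi) > 1$, Lemma~\ref{lem_xinv} shows every class in $H^1_p(G)$ is represented by some $\rho(\xi)$-invariant $u \in D^p(G)$, which is entirely determined by $v := u(1,\cdot)$ via $u(\xi^k,n) = v(n\cdot\xi^{-k})$. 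The two lemmas preceding the statement translate the problem to $H$: the class of $u$ is zero in $H^1_p(G)$ iff $v$ is a.e.\ constant, and $u \in D^p(G)$ iff $v \in D^p(H)$ together with
\[
S_p(g) \;:=\; \sum_{k\in\Z} \delta^k\,\|v - \rho(g\cdot\xi^{-k})v\|_p^p \;<\; \infty
\]
for every $g \in H$, with $S_p(g) \to 0$ as $g \to 1$. The theorem thus becomes: a non-constant $v$ satisfying this summability exists iff $p > p_0$.

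For $p > p_0$ (non-vanishing), I construct $v$ along a slowest-contracting direction. In the non-trivial simply connected nilpotent Lie group $H_0/W$, pick a real coordinate $\varphi$ adapted to an eigenvalue of $\xi$ of extremal modulus $\lambda^{-1}$, so that $\varphi(h\cdot\xi^{-k}) = \lambda^{-k}\varphi(h)$ up to a Jordan-block polynomial correction, and set $v := \chi\circ\varphi$ for a fixed bounded Lipschitz $\chi\colon\R\to\R$ whose derivative lies in $L^p$, lifted to $H$ through $H \to H_0/W$. A direct estimate gives $\|v - \rho(g\xi^{-k})v\|_p^p \lesssim \lambda^{-kp}$ on the contracting tail $k \to +\infty$, while on the tail $k \to -\infty$ the dual bound $\lesssim \lambda^{kp}$ is automatically dominated by $\delta^k$ since $\delta\lambda^p > 1$. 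Hence $S_p(g)$ is essentially a geometric series of ratio $\delta\lambda^{-p}$, finite precisely when $p > p_0$; since $v$ is non-constant, this produces a non-zero class in $H^1_p(G)$.

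For $p \le p_0$ (vanishing), I argue by contradiction. Given a non-constant $v \in D^p(H)$ with $S_p(g) < \infty$ for all $g$, the key is a matching lower bound $\|v - \rho(g_0\cdot\xi^{-k})v\|_p^p \gtrsim \lambda^{-kp}$ as $k \to +\infty$, for a test element $g_0$ chosen along the extremal weight space. Combined with $\delta^k$ this yields $S_p(g_0) \gtrsim \sum_{k\ge 0}(\delta\lambda^{-p})^k = +\infty$ whenever $p \le p_0$ (including the boundary, where the series is $\sum 1$), contradicting the summability hypothesis. The main obstacle is exactly this sharp lower bound: one must show that the non-constancy of any $v \in D^p(H)$ is detected by a test translation along the slowest eigendirection of $\xi$. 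This calls for a careful decomposition of $H$ along the $\xi$-weight filtration, with the totally disconnected part $H/H_0$ and the compact kernel $W$ treated separately (using that the contribution of faster weights to $b$ decays too quickly to carry the non-constancy alone, so it must show up in the extremal slab). The Lie and non-Archimedean cases are then handled uniformly in the discrete framework of Section~\ref{MainThmSection}, extending Pansu's solvable-Lie argument of \cite{Pan90,Pan06}.
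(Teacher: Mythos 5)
There is a genuine gap, and it sits exactly where you flag your ``main obstacle.'' For the vanishing direction ($p\le p_0$) you need the lower bound $\|b(g_0\cdot\xi^{-k})\|\gtrsim\lambda^{-k}$, and you propose to get it from the principle that ``the non-constancy of $v$ must show up in the extremal slab,'' to be proved by a weight-filtration decomposition. As stated this is not justified and is not obviously true: a priori a non-constant $v$ could have $b$ vanishing identically on the slowest eigendirection. The paper's mechanism is different and much more elementary. After reducing (via two cocompact embeddings, hence quasi-isometries) to the case where $N$ is simply connected nilpotent and $\xi$ acts with real positive eigenvalues, one picks a genuine one-parameter subgroup $\gamma(t)$ on which $\xi$ acts by the scalar $\lambda$, and uses only the cocycle subadditivity $\|b(\gamma(1))\|\le \lfloor\lambda^n\rfloor\,\|b(\gamma(\lambda^{-n}))\|+e_n$ with $e_n=\sup_{t\in[0,\lambda^{-n}]}\|b(\gamma(t))\|\to 0$ by continuity of $b$. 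This gives $\|b(\gamma(\lambda^{-n}))\|\ge\lambda^{-n}(\|b(\gamma(1))\|-e_n)$ and hence divergence of (\ref{eq:b}) for $p\le p_0$, \emph{provided} $b(\gamma(1))\ne 0$. The degenerate case $b(\gamma(1))=0$ — precisely the case your heuristic cannot exclude — is handled separately: $\rho(\gamma(1))u=u$ implies, via Lemma \ref{lem:centre}, that $u$ is invariant under the non-compact center of $N_0$, which is normal, so $u$ is constant by Lemma \ref{lem:inv}. Without this dichotomy your argument does not close. You also omit the structural reductions that make $\gamma$ available: the direct-product splitting $H\approx N\times R$ (Corollary \ref{confi}, a nontrivial appendix result), the replacement of $N$ by its strictly contracted cocompact subgroup, and the polar decomposition $\sigma(1)=g_+k$ with a cocompact embedding into $(\Z\times K)\ltimes(N\times R)$ to make the eigenvalues real positive.

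Your non-vanishing construction also fails as written. If $v=\chi\circ\varphi$ with $\varphi$ a real coordinate and $\chi$ bounded Lipschitz, then $v-\rho(g)v$ factors through $\varphi$, hence is constant on the fibers of $\varphi$; these fibers have infinite Haar measure as soon as $\dim N>1$ or $H/H_0$ is non-compact, so $v-\rho(g)v\notin L^p(H)$ unless it vanishes, and $v\notin D^p(H)$. (Also, ``lifting through $H\to H_0/W$'' is not available: $H_0/W$ is a quotient of the subgroup $H_0$, not of $H$.) The paper's Proposition \ref{ct} instead takes $v(n,r)=w(n)\beta(r)$ with $w$ \emph{compactly supported} Lipschitz on the Lie factor and $\beta$ the indicator of a compact open subgroup of the totally disconnected factor, so that every $b(g)$ has support of uniformly bounded measure; the summability for $p>p_0$ then needs only the crude bound $|h^{-1}\cdot\xi^{-d}|\lesssim\lambda'^{-d}$ for any $\lambda'<\lambda$, with no extremal coordinate at all. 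Your overall architecture (reduction to $\Z\ltimes H$, $\rho(\xi)$-invariant representatives via Lemma \ref{lem_xinv}, translation to the series (\ref{eq:b})) matches the paper, but both halves of the dichotomy need to be repaired along these lines.
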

\begin{proof}
Using the quasi-isometry invariance of $L^p$-cohomology (Appendix \ref{qii}), we can suppose that $S=\mathbf{Z}$ contracts $H$.

By Corollary \ref{confi}, $H/W(H_0)$ has a characteristic open subgroup $L$ decomposing canonically as a direct product of a connected Lie group and a totally disconnected characteristic subgroup. Therefore the theorem is a combination of the two following propositions.
\end{proof}

\begin{prop}\label{ct}
Consider contracting actions $\sigma$, $\sigma'$ of $\mathbf{Z}$ on a connected Lie group $N$ and on a totally disconnected locally compact group $R$, and $H=N\times R$ with the diagonal (contracting) action of $\mathbf{Z}$. Define $G=\mathbf{Z}\ltimes H$. Pick $p\ge 1$. If $N$ is non-compact, assume in addition that $p>p_0(G)$. Then $H^1_p(G)\neq 0$.
\end{prop}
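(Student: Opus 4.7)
The plan is to use the right-$\Z$-invariant cocycle framework developed in the preceding two lemmas. A right-$\Z$-invariant $u \in D^p(G)$ corresponds to a function $v$ on $H$ via $u(k, h) = v(h \cdot \xi^{-k})$: the first lemma guarantees $u \notin \R + L^p(G)$ as soon as $v$ is non-constant, and the second reduces membership $u \in D^p(G)$ to checking $v \in D^p(H)$ together with the convergence of
$$\sum_{k \in \Z} \delta^k \,\|v - \rho(g \cdot \xi^{-k}) v\|_p^p$$
for every $g \in H$, this sum tending to $0$ as $g \to 1$; here $\delta = \Delta_G(\xi) > 1$. So the problem reduces to producing such a non-constant $v$.

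My choice would be $v(n, r) = \phi(n)\, \mathbf{1}_V(r)$, where $V \subset R$ is any compact open subgroup (which exists by van Dantzig), and $\phi \colon N \to \R$ is a non-constant, Lipschitz, compactly supported function with respect to a fixed left-invariant Riemannian distance on $N$. When $N$ is compact I would instead take $\phi \equiv 1$, so $v = \mathbf{1}_V$; this is still non-constant on $H$ because $R$ is non-compact. In both cases $v \in L^p(H) \subset D^p(H)$ is immediate, so the real issue is the cocycle summability.

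The heart of the argument is the $k \to +\infty$ tail, where $\delta^k$ grows and demands exponential decay of the norm. Diagonal contraction of the $\Z$-action on $N \times R$ implies that the $R$-component $r_k$ of $g \cdot \xi^{-k}$ lies in $V$ for all $k$ beyond some threshold $k_0(g)$ (uniform in $g$ over compacta); since $V$ is a subgroup, $\mathbf{1}_V$ then contributes nothing to $b(g \cdot \xi^{-k}) := v - \rho(g \cdot \xi^{-k})v$, and a direct computation yields
$$\|b(g \cdot \xi^{-k})\|_p^p \;=\; |V| \cdot \|\phi - \rho(n_k)\phi\|_p^p,$$
where $n_k$ is the $N$-component of $g \cdot \xi^{-k}$. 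By the definition of $\lambda$ and the Jordan decomposition of $\xi^{-1}$ acting on $\mk{n}$, together with the exponential map and Baker--Campbell--Hausdorff near $1$, the length of $n_k$ is bounded by $C \lambda^{-k}$ up to a harmless polynomial factor in $k$; combined with Lipschitz compact support of $\phi$, this yields $\|\phi - \rho(n_k)\phi\|_p \leq C' \lambda^{-k}$, so the tail becomes the geometric series $\sum_k (\delta/\lambda^p)^k$, convergent precisely when $p > p_0 = \log \delta / \log \lambda$. The opposite tail $k \to -\infty$ is harmless since $\delta^k \to 0$ while $\|b\|_p \leq 2\|v\|_p$. In the compact $N$ case with $\phi \equiv 1$ the $\phi$-factor is identically zero, so $b(g \cdot \xi^{-k}) = 0$ for all $k \geq k_0$ and summability holds for every $p \geq 1$ unconditionally.

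The hard part will be arranging these bounds uniformly in $g$ near $1$, so that dominated convergence (starting from the trivial observation $b(1) = 0$) yields the required continuity that the sum tends to $0$ as $g \to 1$. A secondary subtlety is converting the infinitesimal rate $\lambda$ for $\xi^{-1}$ on $\mk{n}$ into a genuine length bound on $N$, for which one invokes the exponential map and BCH near the identity; the polynomial factors from non-diagonalisable Jordan blocks do not affect convergence of the geometric series once $p > p_0$.
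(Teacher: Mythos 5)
Your proposal is correct and follows essentially the same route as the paper: the same test cocycle $v(n,r)=\phi(n)\mathbf 1_V(r)$ with $\phi$ Lipschitz compactly supported and $V$ a compact open (or merely clopen) subset of $R$, the same splitting of the weighted sum into the trivial $k\to-\infty$ tail and the $k\to+\infty$ tail controlled by the contraction rate $\lambda$ (the paper absorbs your polynomial Jordan-block factor by working with an auxiliary $\lambda'<\lambda$ satisfying $\delta<\lambda'^p$), and the same uniform-over-compacta argument for continuity at $1$. No gaps to report.
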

\begin{proof}
On $H=N\times R$, we define $v(n,r)=w(n)\beta(r)$, where $w$ is non-zero Lipschitz and compactly supported on $N$ (Lipschitz referring to the intrinsic Riemannian distance on $N$), and $\beta$ being the indicator function of some clopen neighborhood of $1$.

As $b$ is bounded, for $d\to -\infty$, the sum (\ref{eq:b}) converges in $L^p$-norm, uniformly on $g$.

Then for fixed $g=(h,s)\in N=N\times R$,
$$b(g\cdot \xi^{-d})(n,r))=v(n,r)-v((n,r)(g^{-1}\cdot \xi^{-d}))$$ 
$$=w(n)\beta(r)-w((n(h^{-1}\cdot \xi^{-d}))\beta(r(s^{-1}\cdot \xi^{-d})))$$
When $d\to\infty$, $s^{-1}\cdot \xi^{-d}\to 1$, so eventually, say for $d\ge d_0=d_0(g)$, we have $\beta(r\cdot(g^{-1}\cdot \xi^{-d}))=\beta(r)$. So for $d\ge d_0$
$$b(g\cdot \xi^{-d})(n,r))=\beta(r)[w(n)-w(n(h^{-1}\cdot \xi^{-d}))]$$
Pick $\lambda'<\lambda$ such that $\delta<\lambda'^p$. Then as $\lambda'<\lambda$, the Riemannian length $h^{-1}\cdot \xi^{-d}$ is bounded above by $\lambda'^{-d}$ for $d\gg 0$. As $w$ is Lipschitz, this implies that for some constant $C$,
$$b(g\cdot \xi^{-d})(n,r))\le C\lambda'^{-d}.$$
As $b(g')$ has support of bounded measure (independently on $g'$), this implies $\|b(g\cdot \xi^{-d})\|\le C'\lambda'^{-d}$ for $d\ge 0$, for some suitable constant $C'$. So 
$$\Delta(\xi)^d\|b(g\cdot \xi^{-d})\|^p\le C'(\delta/\lambda'^{-p})^d.$$
and (\ref{eq:b}) holds.

We still need the continuity at $1$. First note that $d_0=d_0(g)$, as defined above, can be chosen bounded when $g$ is bounded. We pick $d_0$ working for some neighborhood $V$ of 1. From the Lipschitz condition, we actually have

$$|b(g\cdot \xi^{-d})(n,r))|\le C|h^{-1}\cdot \xi^{-d}|,$$
where $|\cdot|$ denotes the Riemannian length in $N_0$, and
$|h^{-1}\cdot \xi^{-d}|\le C'|h|\lambda'^{-d}$ for all $d\ge d_1$, $d_1$ being independent on $h\in V$, which lies in the given neighborhood of $1$. This implies that the sum
$$\sum_{d\ge d_1}\Delta(\xi)^d\|b(g\cdot \xi^{-d})\|^p$$
is continuous at 1, and therefore so is the entire sum, indexed by $\mathbf{Z}$.
\end{proof}

\begin{prop}
Consider contracting actions $\sigma$, $\sigma'$ of $\mathbf{Z}$ on a non-compact connected Lie group $N$ and on a totally disconnected group $R$, and $H=N\times R$ with the diagonal (contracting) action of $\mathbf{Z}$. Define $G=\mathbf{Z}\ltimes H$ and $p_0(G)$ as above.
Then $H^1_p(G)=0$ for all $p\le p_0(G)$.
\end{prop}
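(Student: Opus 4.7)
My plan is to prove the statement by contradiction: assume some class in $H^1_p(G)$ is non-trivial and deduce $p > p_0(G)$. By Lemma \ref{lem_xinv}, since $\Delta(\xi) = \delta > 1$, any such class has a representative $u \in D^p(G)$ with $\rho(\xi) u = u$; setting $v(h) = u(1,h)$ gives $u(d,h) = v(h\cdot \xi^{-d})$. By the lemma at the beginning of this section, $u\in \R + L^p(G)$ if and only if $v$ is essentially constant. Hence it suffices to show that if $v$ is non-constant then the cocycle criterion
$$\Sigma(g)\,:=\,\sum_{d\in\Z}\delta^{d}\,\|b_v(g\cdot \xi^{-d})\|_p^{p}\,<\,\infty\quad(\forall g\in H)$$
must fail for $p\le p_0$.

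The strategy is to establish a Lipschitz-from-below estimate: when $v$ is non-constant, there exists $g\in H$ with $\|b_v(g\cdot\xi^{-d})\|_p\ge c_g\,\lambda^{-d}$ for all sufficiently large $d$. Granting this, the $d$-th summand satisfies
$$\delta^{d}\|b_v(g\cdot\xi^{-d})\|_p^{p}\,\ge\,c_g^{p}\,(\delta/\lambda^{p})^{d},$$
and $p\le p_0=\log\delta/\log\lambda$ forces $\delta/\lambda^{p}\ge 1$, so the summands do not tend to $0$ as $d\to+\infty$, contradicting summability of $\Sigma(g)$.

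To obtain the lower bound I would proceed as follows. Use Corollary \ref{confi} to put $H$ in the form $N\times R$ with $N$ a non-compact connected Lie group and $R$ totally disconnected, both stable under $\xi$. Let $X\in\mathfrak{n}$ be an eigenvector of the differential $d\tau(\xi)$ at $1$ for the slowest-contracting eigenvalue (of modulus $\lambda^{-1}$), and take $g=\exp(X)$. Then $g\cdot\xi^{-d}\approx\exp(\lambda^{-d}X)$, so the target estimate becomes $\|b_v(\exp(sX))\|_p\gtrsim s$ for small $s>0$. If this failed, i.e.\ $\|b_v(\exp(sX))\|_p=o(s)$, then the distributional derivative $\partial_X v$ would vanish in $L^p$; iterating this over eigenvectors produced by all powers of $d\tau(\xi)$, the full Lie-algebra action of $\mathfrak{n}$ would annihilate $v$, forcing $v$ to be constant along $N$. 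On the totally disconnected factor $R$ one would replace the differential argument by its discrete analogue: any open subgroup eventually absorbs $\xi^{-d}$-orbits, so an $o(\cdot)$ decay of $b_v$ in the $R$-direction together with the sum condition forces $v$ to be locally constant on $R$. Combining these two, $v$ is globally constant, contradicting our standing assumption.

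The main obstacle is the boundary case $p = p_0$, where the geometric ratio $(\delta/\lambda^{p})^{d}$ is identically $1$: here one cannot afford even a subexponential improvement in the lower bound, so the Lipschitz-from-below constant $c_g$ must be strictly positive. This is exactly what the distributional-derivative argument provides, but translating between the $L^p$-cocycle decay along the discrete $\xi$-orbit and honest Lie-algebra derivatives requires covering enough directions through the eigenvectors of iterates of $d\tau(\xi)$, and then controlling the interaction with the totally disconnected factor $R$ via the open-subgroup structure. This coordination between the Lie and non-Archimedean parts, together with the sharpness of the critical exponent, is the heart of the argument.
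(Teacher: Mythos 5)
Your overall scheme (argue by contradiction, pass to a $\rho(\xi)$-invariant representative via Lemma~\ref{lem_xinv}, and analyze the summability of $\sum_d\delta^d\|b(g\cdot\xi^{-d})\|_p^p$ along the slowest-contracting direction) is the paper's scheme, but the step you flag as ``the heart of the argument'' is precisely where the proposal breaks down. What convergence of the sum at $p\le p_0$ actually yields is only $\|b(\gamma(\lambda^{-d}))\|=o(\delta^{-d/p})=o(\lambda^{-d})$ \emph{along the sequence} $s=\lambda^{-d}$, and only for a one-parameter subgroup $\gamma$ on which $\xi$ acts by exact scalar multiplication by the slowest eigenvalue. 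To promote this to $b(\gamma(1))=0$ the paper uses the cocycle subadditivity $\|b(\gamma(1))\|\le\lambda^{d}\|b(\gamma(\lambda^{-d}))\|+e_d$ with $e_d\to0$ (valid because $\rho|_N$ is isometric), after first reducing, up to quasi-isometry, to $N$ simply connected nilpotent with $\xi$ acting with positive real eigenvalues; your ``$g\cdot\xi^{-d}\approx\exp(\lambda^{-d}X)$'' hides a rotation (complex eigenvalue or elliptic part) that destroys the estimate exactly at the critical exponent, and this reduction is missing from your plan. More seriously, your route from ``no lower bound'' to ``$v$ constant'' by iterating the distributional-derivative argument over eigenvectors fails: for an eigendirection of modulus $\mu^{-1}$ with $\mu>\lambda$, summability only gives decay $o(\delta^{-d/p})=o(\lambda^{-dp_0/p})$, which at $p=p_0$ is $o(\lambda^{-d})$ and hence much weaker than the $o(\mu^{-d})$ needed to kill the derivative in that direction. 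So you can conclude nothing about the faster directions, and ``non-constant $v$ implies a lower bound in the slowest direction'' is exactly the unproved implication.

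The paper closes this gap without ever touching the other directions or the factor $R$: once $b(g_0)=0$ for the single element $g_0=\gamma(1)$, the function $u$ is invariant under the non-compact closed subgroup generated by $g_0$, hence by Lemma~\ref{lem:centre} under its centralizer, which contains the center $Z$ of $N$; since $Z$ is non-compact and normal in all of $G$, Lemma~\ref{lem:inv} forces $u$ to be constant on $G$. This normality argument absorbs the totally disconnected factor for free, whereas your separate treatment of $R$ (``$v$ locally constant on $R$'') is both unjustified and insufficient, since locally constant does not yield constant. In short, the two missing ideas are the subadditivity trick along one exactly-scaled one-parameter subgroup (which handles the borderline $p=p_0$), and the centralizer-plus-normal-subgroup argument replacing the all-directions derivative computation.
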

\begin{proof}
The first step is to modify the action of $\mathbf{Z}$ on $N$ so as to have a triangulable action with positive real eigenvalues.

Consider the tangent action of $\sigma(\mathbf{Z})$ on $\mathfrak{n}$. Then we have $\mathfrak{n}=\mathfrak{k}\oplus\mathfrak{m}$, where $\mathfrak{m}$, resp.~$\mathfrak{k}$ is the sum of characteristic subspaces for eigenvalues of $\sigma(\mathbf{Z})$ of modulus less than 1 (resp.~equal to 1). Moreover these are Lie subalgebras, and $[\mathfrak{k},\mathfrak{m}]\subset\mathfrak{m}$, so $\mathfrak{m}$ is an ideal. Let $M$ and $K$ be the corresponding Lie subgroups of $G$. Then $M$ is strictly contracted by $\sigma(\mathbf{Z})$, so is nilpotent and simply connected. Since $M$ is contained in the nilpotent radical of $N$, which is simply connected, $M$ is closed. Moreover $\sigma$ contracts the quotient $N/M$, so that the tangent action has all its eigenvalues of modulus one. This implies (for instance) that $\sigma(\mathbf{Z})$ preserves the measure on $N/M$, and therefore $N/M$ is compact. So $\mathbf{Z}\ltimes M\times R$ is cocompact in $G$, hence quasi-isometric to it. In view of the quasi-isometry invariance of the $L^p$-cohomology (Appendix \ref{qii}), we can replace $N$ by $M$ if necessary, so we can suppose that $N$ is nilpotent and simply connected.

Then the group $\textnormal{Aut}(N)=\textnormal{Aut}(\mathfrak{n})$ is a linear algebraic group; so we can write $\sigma(1)=g_+k=kg_+$ with $k$ elliptic and $g_+$ having all eigenvalues real positive. If we define a new action $\sigma_1$ of $\mathbf{Z}$ on $N$ by replacing $\sigma(n)$ by $g_+^n$, define $G_1=\mathbf{Z}\ltimes_{\sigma_1,\sigma'}(N\times R)$. Let $K$ denote the closure the subgroup $\langle k\rangle$ of $\textnormal{Aut}(N)$. We can make $\mathbf{Z}\times K$ act on $N\times R$, the action on $N$ being the original action, the action of $\mathbf{Z}$ on $R$ being the original action, and the action of $K$ on $R$ being trivial. Then both $G$ and $G_1$ embed into $(\mathbf{Z}\times K)\ltimes (N\times R)$ as cocompact subgroups. Therefore $G_1$ is quasi-isometric to $G$. Again using the quasi-isometry invariance of $L^p$-cohomology, we can henceforth assume that $N$ is simply connected nilpotent and $\mathbf{Z}$ acts on it with all eigenvalues real positive.

Assume that $H^1_p(G)\neq 0$. By Lemma \ref{lem_xinv}, there exist $u\in D^p(G)$ which is $\rho(\xi)$-invariant, and therefore $u$ can be written as above. As the eigenvalues of $\xi$ on $N$ are real, there exists a one-parameter subgroup $\gamma(t)_{t\in\mathbf{R}}$ of $N$ on which $\xi$ acts by multiplication by $\lambda$.
Then (\ref{eq:b}) reads as
\begin{equation}\label{ee}\sum_{n\in\mathbf{Z}}\Delta(\xi)^n\|b(\gamma(\lambda^{-n}))\|^p_p<\infty.\end{equation}

Set $g_0=\gamma(1)$. For $n\ge 0$, write $\lambda^n=m_n+\varepsilon_n$ where
$m_n=\lfloor\lambda^n\rfloor$.
Then$$\|b(g_0)\|=\|b(\gamma(1))\|=\|b(\gamma((m_n+\varepsilon_n)\lambda^{-n}))\|$$
$$\le
m_n\|b(u(\lambda^{-n}))\|+\|b(\varepsilon_n\lambda^{-n})\|.$$
In the last inequality, we use that $\rho|_N$ is an isometric action.  Set
$$e_n=\sup\{\|b(\gamma(t))\|:t\in[0,\lambda^{-n}]\}.$$ Then 
$$\|b(g_0)\|\le
\lambda^n\|b(\gamma(\lambda^{-n}))\|+e_n.$$
We have $e_n\to 0$ for $n\to+\infty$ since $\lambda>1$ and the cocycle $b$ is continuous.
Assume that $\|b(g_0)\|\neq 0$. Then for $n\gg 0$ we have
$$0\le\|b(g_0)\|-e_n\le\lambda^n\|b(\gamma(\lambda^{-n}))\|,$$
so (\ref{ee}) implies 
$$\sum_{n\in\mathbf{N}}\left(\frac{\Delta(\xi)}{\lambda^p}\right)^n(\|b(g_0)\|-e_n)^p<\infty.$$
As $\|b(g_0)\|-e_n\to\|b(g_0)\|>0$, this implies $|\Delta(\xi)/\lambda^p|<1$, that is,
$$p>\log(\Delta(\xi))/\log\lambda.$$
Now assume by contradiction that $b(g_0)=0$, that is, $\rho(g_0)u=u$. Let $Z$ be the center of $N_0$, which is non-compact. By  
Lemma \ref{lem:centre}, $\rho(Z)u=u$. By Lemma \ref{lem:inv}, $u$ is constant on $G$, a contradiction.
\end{proof}

\section{On Gromov-hyperbolic groups}\label{hyp}

In the section, we prove the corollaries of Theorem \ref{main}. First, Corollary \ref{c2} is immediate from the theorem.

The following proposition is essentially proved, in the even more general context of metric groups by Gromov \cite[8.2.D]{G}.

\begin{prop}
Let $G$ be a non-elementary Gromov-hyperbolic locally compact group. Then $G$ contains a quasi-isometrically embedded free subsemigroup on two generators.\label{qf}
\end{prop}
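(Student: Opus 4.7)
The plan is to produce two hyperbolic elements $a,b\in G$ whose four boundary fixed points in $\partial G$ are distinct, and then run a ping-pong argument on the boundary with high powers of $a$ and $b$.

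First I would set up the basic dynamical picture. Endow $G$ with a proper left-invariant word metric $d$ coming from a compact generating set, so that $G$ acts on itself (a proper Gromov-hyperbolic space) by isometries, properly and cocompactly. The standard trichotomy for isometries of a Gromov-hyperbolic space gives a notion of hyperbolic element (positive translation length, two distinct boundary fixed points $g^+$, $g^-$ with North--South dynamics on $\partial G$). Since $G$ is non-elementary, $\partial G$ is infinite (in fact a perfect set), and the subset of $\partial^2 G=\partial G\times\partial G\setminus\text{diag}$ consisting of fixed-point pairs of hyperbolic elements is dense; this lets me pick hyperbolic $a,b\in G$ such that $a^+,a^-,b^+,b^-$ are four pairwise distinct points of $\partial G$.

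Next I would perform the ping-pong. By the North--South dynamics, for any prescribed small closed neighborhoods $U_a^+,U_a^-,U_b^+,U_b^-$ of $a^\pm,b^\pm$ that are pairwise disjoint, there is $N$ so large that $\alpha:=a^N$ sends $\partial G\setminus U_a^-$ into the interior of $U_a^+$, and $\beta:=b^N$ sends $\partial G\setminus U_b^-$ into the interior of $U_b^+$. Fix a basepoint $x_0\in\partial G$ lying outside the four neighborhoods. Any non-empty positive word $w=s_{i_1}\cdots s_{i_k}$ with $s_{i_j}\in\{\alpha,\beta\}$ then iteratively pushes $x_0$ through the attracting neighborhoods and in particular sends it off $x_0$, so $w\neq 1$. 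This proves that the semigroup $\langle\alpha,\beta\rangle^+$ is free.

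For the quasi-isometric embedding, I would show that the orbit $o_k=s_{i_1}\cdots s_{i_k}\cdot 1_G$ traces a quasi-geodesic in $G$. The upper bound $d(1_G,w)\leq k\max(|\alpha|,|b|)$ is trivial. For the lower bound I would use the standard fact that in a Gromov hyperbolic space, a piecewise-geodesic path whose consecutive segments satisfy a uniform "angle" condition (i.e., the Gromov product at the junction is bounded) is a global quasi-geodesic. Here the geodesic segments $[o_{j-1},o_j]$ $\eps$-track portions of the axes of conjugates of $\alpha$ or $\beta$, and the disjointness of $U_a^\pm,U_b^\pm$ forces the Gromov product $(o_{j-1}\mid o_{j+1})_{o_j}$ to be uniformly bounded (after possibly enlarging $N$). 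This local-to-global criterion yields constants $c>0$, $C\geq 0$ with $d(1_G,o_k)\geq ck-C$, and hence the orbit map from the free semigroup (with its word metric) to $G$ is a quasi-isometric embedding.

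The main obstacle is the boundary-dynamics input in the locally compact (non-discrete) setting: concretely, producing the hyperbolic elements with four distinct fixed points and guaranteeing uniform North--South dynamics on $\partial G$. Once this is available, the ping-pong argument and the Gromov-product/local-to-global quasi-geodesic step are formal and independent of whether $G$ is discrete. As indicated in the paper, this is essentially Gromov's observation in \cite[8.2.D]{G}, so I expect to invoke that reference for the dynamical facts and only spell out the semigroup (rather than group) ping-pong combined with the quasi-geodesic estimate.
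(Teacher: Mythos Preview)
Your approach is essentially the same as the paper's: find two hyperbolic elements with suitably separated boundary fixed points via Gromov \cite[8.2.D]{G}, then run a ping-pong to obtain a quasi-isometrically embedded free subsemigroup. The only differences are cosmetic. First, the paper invokes the ready-made quasi-isometric ping-pong lemma \cite[Lemma~2.1]{CT} as a black box, whereas you unfold that argument (boundary ping-pong for freeness, then the local-to-global quasi-geodesic criterion for the QI embedding). Second, the paper records the slightly weaker configuration that actually suffices for the \emph{semigroup} ping-pong: with repelling points $o_i$ and attracting points $t_i$, one only needs $\#\{o_i,t_1,t_2\}=3$ for $i=1,2$ (the two repelling points may coincide), while you ask for all four points to be pairwise distinct. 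Your stronger hypothesis is of course still available from the density of hyperbolic fixed-point pairs, so this is not a gap.
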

\begin{proof}
Gromov provides two hyperbolic elements $\gamma_1,\gamma_2$ with origin $o_1,o_2$, target $t_1,t_2$ and $\#\{o_i,t_1,t_2\}=3$ for $i=1,2$. It then follows from the quasi-isometric ping-pong Lemma \cite[Lemma~2.1]{CT} that suitable powers of $\gamma_1$ and $\gamma_2$ generate a free subsemigroup.
\end{proof}

Such a quasi-isometri\-cally embedded subsemigroup provides a quasi-isometri\-cally embedded tree in $G$, and by \cite[Theorem~6]{T1}, this implies that the $L^p$-cohomology is non-zero for large $p$:

\begin{cor}
If $G$ is a non-elementary Gromov-hyperbolic locally compact group, then $H^1_p(G)\neq 0$ for $p$ large enough.
\end{cor}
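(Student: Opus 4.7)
The plan is to combine Proposition~\ref{qf} with \cite[Theorem~6]{T1}, along the lines already hinted at in the paragraph preceding the statement.

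First, I will invoke Proposition~\ref{qf} to extract from the non-elementary Gromov-hyperbolic group $G$ a quasi-isometrically embedded free subsemigroup $\Sigma=\langle a,b\rangle^+$ on two generators. Endowed with the word metric coming from $\{a,b\}$, this semigroup is canonically isometric to the vertex set of a rooted infinite binary tree: every element of $\Sigma$ has a unique expression as a positive word in $a,b$, and right multiplication by $a$ or $b$ produces its two children. Up to bi-Lipschitz equivalence, $G$ therefore contains a quasi-isometrically embedded $3$-regular tree.

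Next, I will apply \cite[Theorem~6]{T1}, which asserts that whenever a compactly generated locally compact group admits a quasi-isometrically embedded tree of bounded valence and exponential growth, its first reduced $L^p$-cohomology is nonzero for all $p$ sufficiently large. Since Gromov-hyperbolic groups are compactly generated by definition, this applies to $G$ and yields $\overline{H^1_p}(G)\neq 0$, hence a fortiori $H^1_p(G)\neq 0$, for all sufficiently large $p$.

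No substantial obstacle is expected, since both ingredients have already been put in place. The only mild point to verify is that the rooted, one-sided tree coming from a free subsemigroup is still covered by the hypotheses of \cite[Theorem~6]{T1}; but those hypotheses only require bounded valence and exponential growth of the quasi-isometrically embedded subset, both of which are evidently satisfied by a rooted binary tree. Consequently, the corollary can essentially be regarded as a one-line consequence of Proposition~\ref{qf} and Tessera's theorem.
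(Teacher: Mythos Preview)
Your proposal is correct and follows essentially the same route as the paper: the paper simply remarks that the quasi-isometrically embedded free subsemigroup from Proposition~\ref{qf} yields a quasi-isometrically embedded tree, and then invokes \cite[Theorem~6]{T1}. Your write-up is in fact slightly more explicit than the paper's one-line justification, spelling out why the free subsemigroup carries a tree metric and why the rooted variant still satisfies the hypotheses of Tessera's theorem.
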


Note that for the groups considered in the paper, the results of \cite{CT} are enough to provide a quasi-isometrically embedded subsemigroup without using Proposition \ref{qf}, but we found it natural and useful to mention it. Anyway, (\ref{c3}.\ref{3a})$\Rightarrow$(\ref{c2}.\ref{2a}) and thus (\ref{c3}.\ref{3a})$\Rightarrow$(\ref{c3}.\ref{3b}) is proved. The converse (\ref{c3}.\ref{3b})$\Rightarrow$(\ref{c3}.\ref{3a}) is a particular case of Corollaries \ref{c4} and \ref{c5} (although the reader can prove it directly in a more straightforward way).

In Corollary \ref{c4}, the implications 
(\ref{c4}.\ref{4c})$\Rightarrow$(\ref{c4}.\ref{4b})$\Rightarrow$(\ref{c4}.\ref{4a}) are straightforward. If (\ref{c4}.\ref{4a}) holds, by the already proved implication of Corollary \ref{c3}, the unit component $G_0$, and therefore $G$, is a Lie group of Heintze type or rank-one type.
So we have to prove the following proposition.

\begin{prop}
Let $G$ be a Lie group with finitely many components, of Heintze type or rank-one type, and $K$ a maximal compact subgroup. Then the connected manifold $G/K$ has a $G$-invariant Riemannian metric of negative curvature. 
\end{prop}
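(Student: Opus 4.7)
The plan is to treat the rank-one and Heintze cases separately, reducing each to a classical construction.

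For the rank-one case, by definition $G$ admits a compact normal subgroup $W$, necessarily contained in any maximal compact subgroup $K$, such that a finite-index subgroup of $G/W$ is the quotient of a rank-one real simple Lie group $L$ by its center. Since $K/W$ is maximal compact in $G/W$ and meets every connected component, $G/K$ identifies with the rank-one Riemannian symmetric space $L/K_L$, namely one of $\mathbf{H}^n_{\R}$, $\mathbf{H}^n_{\C}$, $\mathbf{H}^n_{\mathbf{H}}$, or $\mathbf{H}^2_{\mathbf{O}}$. The canonical metric of strictly negative sectional curvature on this space is invariant under its full isometry group, hence under the $G$-action.

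For the Heintze case, I would first reduce to the simply connected solvable setting. By Lemma \ref{allco}, $S = K \times T$ with $T \cong \R$ (identifying the compact factor with our maximal compact of $G$, which is justified since $T$ and $N$ contain no nontrivial compact subgroup); by Lemma \ref{homo}, $G = K \ltimes G_1$ where $G_1 := T \ltimes N$ is a simply connected solvable Heintze Lie group in the classical sense of \cite{H}. Under the identification $G/K \cong G_1$, the $G$-action becomes left translation by $G_1$ composed with conjugation by $K$, so a $G$-invariant Riemannian metric on $G/K$ corresponds exactly to an $\textnormal{Ad}(K)$-invariant inner product on $\mk{g}_1 = \R T_0 \oplus \mk{n}$, where $T_0 \in T$ is a contracting generator.

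I would then invoke Heintze's construction \cite[Theorem~2]{H}: the inner products on $\mk{n}$ for which the symmetric part of $\textnormal{ad}(T_0)$ is positive definite form a non-empty convex set, and extending any such inner product via $\R T_0 \perp \mk{n}$ yields a left-invariant Riemannian metric of strictly negative sectional curvature on $G_1$. Since $K$ commutes with $T_0$ (by Lemma \ref{allco}) and acts on $\mk{n}$ by automorphisms commuting with $\textnormal{ad}(T_0)$, this convex set is $K$-stable; averaging over $K$ then produces a $K$-invariant admissible inner product, and hence the desired $G$-invariant metric of negative curvature on $G/K$. The step I expect to be the main obstacle is exactly this $K$-equivariant version of Heintze's construction, which requires the admissibility condition on inner products of $\mk{n}$ to define a convex set stable under the compact automorphism group $K$. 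Both properties are built into Heintze's Theorem~2, as foreshadowed in the remarks following Corollary \ref{c4} in the introduction.
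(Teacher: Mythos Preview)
Your argument is correct and follows essentially the same architecture as the paper's proof: treat rank-one via the symmetric space, and in the Heintze case write $G=K\ltimes G_1$ with $G_1$ simply connected solvable (the paper's $H$), then invoke Heintze's Theorem~2 in a $K$-equivariant way. The one technical difference worth recording is how $K$-invariance is obtained. The paper starts from an arbitrary $K$-invariant inner product $b$ on $\mk{g}_1$ (with $\mk{a}$ chosen as a $K$-invariant complement to $\mk{g}_1'$, and then observing that $K$ must centralize $\mk{a}$ via the modular function), and applies the scaling $u_\lambda$ of \cite[Theorem~2]{H}; since $u_\lambda$ commutes with $K$, the scaled metric remains $K$-invariant and becomes negatively curved for large $\lambda$. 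You instead use that the admissibility condition $\langle \textnormal{ad}(T_0)x,x\rangle>0$ is linear in the inner product, hence defines a convex $K$-stable set, and average over $K$. Both arguments are valid; the paper's version stays closer to the literal statement of \cite[Theorem~2]{H} (which is phrased as a scaling result), while yours isolates the convexity that makes the averaging work.
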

\begin{proof}
If $G$ is of rank-one type, $K$ acts irreducibly on the tangent space of the base-point of $G/K$, so the left-invariant Riemannian metric is unique up to scalar multiplication, so we get one of the simply connected irreducible symmetric spaces of rank one, which are negatively curved.

So assume that $G$ is of Heintze type.
Let $H$ be a cocompact normal subgroup which is simply connected.
Let $K$ be a maximal compact subgroup of $G$. The adjoint action of $K$ on $\mathfrak{h}$ preserves the hyperplane $\mathfrak{h}'$, so preserves a complement line $\mathfrak{a}$, corresponding to some one-parameter $A$ normalized by $K$. As the modular function $\Delta$ is non-trivial on $A$, an element of $A$ cannot be conjugate to its inverse, and therefore $K$ centralizes $A$.

Let $b$ be a $K$-invariant scalar product on $\mathfrak{h}$. Consider, for $\lambda>0$, the linear automorphism of $\mathfrak{h}$ mapping $a+v$ ($a\in\mathfrak{a}$, $v\in\mathfrak{h}'$) to $\lambda^{-1}a+v$. Note that it commutes with the action of $K$, so $(u_\lambda)_*b$ is also $K$-invariant.
Then \cite[Theorem 2]{H} states that if $\lambda$ is large enough, then the left-invariant metric on $H$ obtained by translating $(u_\lambda)_*b$ from the identity, is negatively curved. Moreover, this metric is $K$-invariant, and the group of isometries generated by left translations of $H$ and conjugation by elements of $K$ is naturally identified with $G=K\ltimes H$.
\end{proof}

Finally similarly, in Corollary \ref{c5}, the implications 
(\ref{c5}.\ref{5c})$\Rightarrow$(\ref{c5}.\ref{5b})$\Rightarrow$(\ref{c5}.\ref{5a}) are straightforward. If (\ref{c5}.\ref{5a}) holds, by the already proved implication of Corollary \ref{c3}, $G$ is of non-Archimedean Heintze or rank-one type. In the latter case, $G$ acts on the corresponding Bruhat-Tits tree, giving (\ref{c5}.\ref{5c}). Precisely, the action by conjugation of $G$ on $G_0$ provides a proper map $G\to\textnormal{Aut}(G_0)$, and $\textnormal{Aut}(G_0)$ has a natural action on the Bruhat-Tits tree. For groups of non-Archimedean type, we have Proposition \ref{nonArchimedeanHeintze} below. We first need the following lemma.

%\alph{1}

%(\ref{c3}.\ref{3a})

\begin{lem}\label{ac}
Let $H$ be a non-compact locally compact group with $H_0$ compact, endowed with a contracting action of $\mathbf{Z}$. Then there exists a vacuum subset which is a compact open subgroup.
\end{lem}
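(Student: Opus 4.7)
The plan is to reduce to $H$ totally disconnected, normalize $U_0$ to be forward $\alpha$-stable, and then exhibit a compact open subgroup of $H$ containing the $\alpha$-attractor $A=\bigcap_{n\ge 0}\alpha^n(U_0)$ and stable under $\alpha$; such a subgroup is automatically vacuum.

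For the reduction, $H_0$ is a compact characteristic subgroup preserved by $\alpha$, and the quotient $H/H_0$ is totally disconnected locally compact with an inherited contracting $\Z$-action. The projection $\pi\colon H\to H/H_0$ is open with compact fibers, so the preimage of a vacuum compact open subgroup of $H/H_0$ is a vacuum compact open subgroup of $H$ (compact as an extension of compact by compact; the vacuum property transfers since $\pi(\alpha^n(M))=\alpha^n(\pi(M))$). We may therefore assume $H$ is totally disconnected, so by van Dantzig's theorem $H$ has a neighborhood basis of $1$ consisting of compact open subgroups. Fix a compact vacuum subset $U_0$; the vacuum property applied to $M=U_0$ gives $\alpha^n(U_0)\subseteq U_0$ for all $n\ge N$ for some $N$, and replacing $U_0$ by $\{1\}\cup\bigcup_{k=0}^{N-1}\alpha^k(U_0)$ we arrange $1\in U_0$ and $\alpha(U_0)\subseteq U_0$.

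The main step, which is also the principal obstacle, is to find a compact open subgroup $V$ of $H$ with $\alpha(V)\subseteq V$ that contains the $\alpha$-invariant compact set $A=\bigcap_{n\ge 0}\alpha^n(U_0)$. Granting such a $V$: the sequence $\alpha^n(U_0)$ is a decreasing chain of compact sets with intersection $A\subseteq V$ and $V$ is open, so a compactness argument forces $\alpha^n(U_0)\subseteq V$ for all $n$ large; combined with the vacuum property of $U_0$ and the $\alpha$-stability of $V$, it then follows that $V$ is itself vacuum. To construct $V$ I would start from a compact open subgroup $V_1\subseteq U_0$ (obtained by applying $\alpha^m$ for $m$ large to any initial compact open subgroup of $H$) together with a covering $U_0\subseteq FV_1$ by finitely many right cosets with $F\subseteq U_0$ finite, and then refine $V_1$ to a smaller compact open subgroup $V_1'$ that is stable under $\alpha$ and normalized by $F$ (obtained as an intersection of finitely many $\alpha$-translates and $F$-conjugates of $V_1$, using that all forward $\alpha$-orbits in $H$ are bounded so the action of $\alpha$ on the finite coset set $U_0/V_1'$ is eventually captured). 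The finite union $V:=FV_1'$ is then a compact open subgroup containing $U_0\supseteq A$ with $\alpha(V)\subseteq V$. This refinement is the crux of the argument, in the spirit of the tidy-subgroup construction of Willis for automorphisms of totally disconnected locally compact groups.
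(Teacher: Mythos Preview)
Your overall strategy---pass to the totally disconnected quotient, normalize the vacuum set so that $\alpha(U_0)\subseteq U_0$, form the attractor $A=\bigcap_{n\ge 0}\alpha^n(U_0)$, and enclose it in a compact open subgroup $V$---is exactly the paper's argument. The paper simply observes that $L=\bigcap_{k\ge 0}\alpha^k(U)$ is a compact subgroup (this is the set of elements whose backward $\alpha$-orbit stays in $U$, hence is bounded; that description makes the subgroup property clear), and that since $H_0$ is compact, $L$ sits inside some compact open subgroup $V$, which is then automatically vacuum.

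Where you go astray is in insisting that $V$ satisfy $\alpha(V)\subseteq V$. This is \emph{not} needed, and your own compactness step already suffices: with your normalization the chain $\alpha^n(U_0)$ is decreasing, so $\alpha^{n_0}(U_0)\subseteq V$ for some $n_0$; then for any compact $M$ one has $\alpha^k(M)\subseteq U_0$ for $k\ge k_0$, whence $\alpha^{k+n_0}(M)=\alpha^{n_0}(\alpha^k(M))\subseteq\alpha^{n_0}(U_0)\subseteq V$. No stability of $V$ under $\alpha$ enters. Consequently your entire final paragraph is superfluous---and, as written, also problematic: a \emph{finite} intersection of $\alpha$-translates of $V_1$ will not be $\alpha$-stable in general; even if $V_1'$ is normalized by $F$, the set $FV_1'$ is a subgroup only when $F$ is a set of coset representatives of a subgroup of $\langle F\rangle V_1'/V_1'$; and since $V_1'\subseteq V_1$, the inclusion $U_0\subseteq FV_1$ does not give $U_0\subseteq FV_1'$. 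Drop the $\alpha$-stability requirement and the Willis-style detour, and your proof is complete and coincides with the paper's.
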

\begin{proof}
Let $U$ be a compact vacuum subset and set $L=\bigcap_{k\ge 0}k\cdot U$. Then it is easy to check that $L$ is a compact subgroup. So, as $H_0$ is compact, $L$ is contained in a compact open subgroup $V$ (by an easy argument; see if necessary \cite[Theorem~2]{Pey}), which is necessarily a vacuum subset.
\end{proof}

\begin{lem}\label{algc}
Let $H$ be a non-compact locally compact group with $H_0$ compact. Let $S$ be a locally compact group with an action $\pi$ on $H$ by group automorphisms, and suppose that $S$ possesses a cocompact copy of $\mathbf{Z}$, which contracts $H$. Then
\begin{itemize}
\item $S$ has a unique homomorphism $p$ onto $\mathbf{Z}$, which is positive on the given copy of $\mathbf{Z}$, and $W=\textnormal{Ker}(p)$ is compact.
\item there exists a compact open subgroup $\Omega$ of $H$ which is $\pi(W)$-invariant, and stable under $\pi(p^{-1}(\mathbf{N}))$, and for every compact subset $K$ of $H$ there exists $k$ such that $\pi(p^{-1}(\mathbf{N}_{\ge k}))K\subset \Omega$.
\end{itemize}
\end{lem}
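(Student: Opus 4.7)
The plan is to construct the homomorphism $p:S\to\Z$ first, and then produce $\Omega$ by two successive averagings on a $\xi$-vacuum compact open subgroup of $H$.

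\textbf{The homomorphism $p$.} Lemma \ref{ac} furnishes a compact open vacuum subgroup $V_0\subset H$. Choose $n_0$ with $\pi(\xi^n)V_0\subset V_0$ for $n\ge n_0$ and set
$V:=\bigcap_{i=0}^{n_0-1}\pi(\xi^i)V_0,$
a compact open subgroup satisfying $\pi(\xi)V\subset V$, with strict inclusion by the vacuum property together with non-compactness of $H$ (otherwise $V$ would be $\xi$-invariant, forcing $H=V$). Let $\delta=[V:\pi(\xi)V]\ge 2$. The modular character $\Delta_H:\Aut(H)\to\R_{>0}$ composed with $\pi$ is a continuous homomorphism $\Delta:S\to\R_{>0}$ with $\Delta(\xi)=\delta^{-1}$. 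Writing $S=\Z\cdot C$ for a compact $C$ (cocompactness of $\Z$ in $S$), $\Delta(C)$ is a compact subgroup of $\R_{>0}$ and hence trivial, so $\Delta(S)=\langle\delta^{-1}\rangle\simeq\Z$. Thus $p:=-\log_\delta\Delta$ is a continuous surjection $S\to\Z$ with $p(\xi)=1$. For uniqueness, any other such $p'$ must also vanish on $C$ (same compactness argument) and send $\xi$ to $1$ (positivity plus surjectivity), so $p'=p$ on $\Z\cdot C=S$. Compactness of $W:=\Ker p$ follows since $W$ injects into the compact space $S/\Z$ (as $W\cap\Z$ is trivial).

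\textbf{The subgroup $\Omega$.} First set $V':=\bigcap_{w\in W}\pi(w)V$. Because the stabilizer of a compact open subgroup under a continuous action by automorphisms is itself open, this stabilizer is a finite-index open subgroup of the compact $W$, so $V'$ is actually a finite intersection, hence a compact open subgroup, $\pi(W)$-invariant by construction. It stays a vacuum subset for $\xi$: given compact $K\subset H$, the set $\pi(W)K$ is compact, so $\pi(\xi^n)\pi(W)K\subset V$ for $n\gg 0$; using normality of $W$ in $S$, the identity $w^{-1}\xi^n=\xi^n(\xi^{-n}w^{-1}\xi^n)$ rewrites this as $\pi(\xi^n)K\subset\pi(w)V$ uniformly in $w\in W$. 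Next pick $n_1$ with $\pi(\xi^n)V'\subset V'$ for $n\ge n_1$ and set
$\Omega:=\bigcap_{i=0}^{n_1-1}\pi(\xi^i)V',$
a compact open subgroup with $\pi(\xi)\Omega\subset\Omega$; the same rewriting via normality of $W$ and $\pi(W)$-invariance of $V'$ preserves $\pi(W)$-invariance of $\Omega$. For any $s\in p^{-1}(\N)$, write $s=w\xi^{p(s)}$ with $w\in W$: then $\pi(s)\Omega=\pi(w)\pi(\xi^{p(s)})\Omega\subset\pi(w)\Omega=\Omega$, establishing $\pi(p^{-1}(\N))$-stability. The final ``swallowing'' clause of the statement is the vacuum property of $\Omega$ read through the same decomposition $s=w\xi^{p(s)}$.

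The main obstacle is isolating that $\Delta(S)=\langle\delta^{-1}\rangle$, which relies precisely on compactness of $C$ and the absence of nontrivial compact subgroups in $\R_{>0}$. After that, the remaining care is bookkeeping: threading normality of $W$ in $S$ through both averagings so that $\pi(W)$-invariance and $\pi(\xi)$-stability coexist in $\Omega$.
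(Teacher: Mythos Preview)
Your construction of $p$ has a real gap. When you write $S=\Z\cdot C$ ``for a compact $C$ (cocompactness of $\Z$ in $S$)'', cocompactness only furnishes a compact \emph{set} $C$, not a subgroup; so $\Delta(C)$ is merely a compact subset of $\R_{>0}$, and the inference ``$\Delta(C)$ is a compact subgroup, hence trivial'' fails. The conclusion $\Delta(S)=\langle\delta^{-1}\rangle$ can genuinely be false: take $S=\Z$ acting contractingly on $H$, but declare the ``given'' cocompact copy of $\Z$ to be $2\Z$, with generator $\xi$. Then $\Delta(S)$ is generated by $\delta^{-1/2}$, strictly larger than $\langle\delta^{-1}\rangle$, and your $p=-\log_\delta\Delta$ sends the generator of $S$ to $1/2\notin\Z$. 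The same defect undermines your uniqueness argument (positivity and surjectivity of $p'$ do not force $p'(\xi)=1$, only $p'(\xi)>0$) and propagates into the second part, where the decomposition $s=w\,\xi^{p(s)}$ tacitly assumes $p(\xi)=1$.

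The repair is exactly the fact you invoke in your second paragraph: the stabiliser in $S$ of a compact open subgroup of $H$ is open. Applied to all of $S$ (not just to $W$), this makes $\Ker\Delta$ open, so $S/\Ker\Delta$ is a discrete subgroup of $\R_{>0}$, hence infinite cyclic; this yields $p$, with $p(\xi)$ some positive integer, possibly larger than $1$. This is what the paper does, after which it fixes $t\in p^{-1}(1)$ and carries out the second part with $t$ in place of $\xi$. Your two-step averaging to build $\Omega$ (intersect over $W$, then over an initial segment of powers) is a clean variant of the paper's construction (which instead takes the closed subgroup \emph{generated} by $\pi(W)L$, using ellipticity of $H$); once rewritten in terms of $t$, it goes through.
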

\begin{proof}
Let us first check that $p$ is unique. Let $p$ be a surjective homomorphism $S\to\mathbf{Z}$. As $S$ contains a cocompact copy of $\mathbf{Z}$, $W=\textnormal{Ker}(p)$ has to be compact and is thus determined as the unique maximal normal compact subgroup of $S$. This gives only two possibilities for $p$, and uniqueness follows from the the positivity assumption.

If $g\in S$, let the automorphism $\pi(s)$ of $H$ multiply the Haar measure of $H$ by $q(s)\in\mathbf{R}_+^*$. As $H$ is non-compact, this is a non-trivial homomorphism $S\to\mathbf{R}^*_+$.
By Lemma \ref{ac}, we can choose a vacuum subset $L$ which is a compact open subgroup of $H$. Then $W_+=\{g\in S|\pi(g)L\subset L\}$ contains 1 in its interior. Indeed, otherwise there exists a net $g_i\to 1$ in $S$, $g_i\notin W_+$. So there exists $h_i\in L$ with $\pi(g_i)h_i\notin L$. As $L$ is compact, we can suppose that $(h_i)$ has a limit $h\in L$. As the action of $G$ on $H$ is continuous, $\pi(g_i)h_i\notin L$ converges to $h\in L$, a contradiction. Similarly $W_-=\{g\in S|\pi(g^{-1})L\subset L\}$ contains 1 in its interior, and therefore $W_0=W_+\cap W_-=\{g\in S|\pi(g)L=L\}$ is an open subgroup of $S$. Clearly, $W_0\subset W=\textnormal{Ker}(q)$. Therefore $W$ is open in $S$.
The group $S/W$ is discrete, is embedded into $\mathbf{R}$ (by $\log\circ q$), and contains a cocompact copy of $\mathbf{Z}$, so it is infinite cyclic as well. This yields the desired homomorphism $p$.

Let us now turn to the second assertion. It follows from the algebraic contractibility assumption that every compact subset of $H$ generates a relatively compact subgroup of $G$. Therefore the subgroup $L_1$ defined as the closed subgroup generated by the compact subset $\{\pi(g)h|g\in W,h\in L\}$ of $H$, is compact. Now $L_1$ is $\pi(W)$-invariant.

Fix $t\in p^{-1}(\{1\})$. Let $t'$ be the positive generator of the given copy of $\mathbf{Z}$. As $\pi(t)L_1$ is compact, there exists $k\ge 0$ such that $\pi(t'^k)\pi(t)L_1\subset L_1$. We can write $t'^kt=t^\ell w$ with $\ell\ge 1$ and $w\in W$. So $\pi(t^\ell)L_1\subset L_1$. Define $\Omega=\bigcap_{i=0}^{\ell-1}\pi(t^i)L_1$. Then $\Omega$ is compact, open, and $\pi(t)\Omega\subset\Omega$. Moreover, as $W$ is normalized by $S$, $\pi(t^i)L_1$ is $\pi(W)$-invariant for all $i$, so $\Omega$ is $\pi(W)$-invariant as well. Now $t$ and $W$ generate the semigroup $p^{-1}(\mathbf{N})$, so $\pi(p^{-1}(\mathbf{N}))\Omega\subset \Omega$. 

As $\bigcup_{i=0}^{\ell-1}\pi(t^i)L_1$ is a compact subset of $H$, there exists $\kappa$ such that
$$\pi(t^{\kappa})\left(\bigcup_{i=1}^{\ell}\pi(t^i)L_1\right)\subset L\subset L_1.$$ So $$\pi(t^{\kappa+i})L_1\subset L_1$$ for all $i=1\dots\ell$, that is,
$$\pi(t^{\kappa})L_1\subset\pi(t^{-i})L_1$$
for all $i=1\dots\ell$, that is
$$\pi(t^{\kappa+\ell})L_1\subset\bigcap_{j=0}^{\ell-1}\pi(t^j)L_1=\Omega.$$
Now if $K$ is a compact subset of $H$, there exists $k\ge 0$ such that $\pi(t^{k})K\subset L\subset L_1$. Therefore $\pi(t^{k+\kappa+\ell})K\subset \Omega$, so $\pi(g)K\subset \Omega$ whenever $p(g)\ge k+\kappa+\ell$.
\end{proof}

\begin{prop}\label{nonArchimedeanHeintze}
Keep the assumptions as in Lemma \ref{algc}. Then there exists a proper length function $\ell$ on the semidirect product $G=S\ltimes H$ such that the pseudo-metric space $(G,d)$, where $d(g,g')=\ell(g^{-1}g')$, is isometric to the vertex set of a $r$-regular tree for some $r\geq 3$.
\end{prop}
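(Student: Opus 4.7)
The plan is to realize $G$ as acting vertex-transitively and properly on a Bruhat--Tits-style regular tree $T$ with compact open vertex stabilizers, and then define $\ell$ as the pull-back of the tree metric along the orbit map $g \mapsto g v_0$.

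Using Lemma \ref{algc}, I would fix the surjection $p \colon S \to \Z$ with compact kernel $W$, an element $\tau \in S$ with $p(\tau) = 1$, and the compact open subgroup $\Omega \subset H$ that is $\pi(W)$-invariant and satisfies $\pi(\tau^n)\Omega \subset \Omega$ for all $n \ge 0$. Setting $\Omega_n := \pi(\tau^{-n})\Omega$ for $n \in \Z$ yields a nested bi-infinite chain $\cdots \subset \Omega_{-1} \subset \Omega_0 \subset \Omega_1 \subset \cdots$ of compact open subgroups of $H$, with $\bigcup_n \Omega_n = H$ by the second clause of Lemma \ref{algc} applied to singleton compact sets. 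A quick argument shows the inclusions are strict: if $\pi(\tau)\Omega = \Omega$, then $\Omega$ is $\pi(S)$-invariant and the contracting hypothesis forces $H = \Omega$ compact, which is excluded. Set $\delta := [\Omega : \pi(\tau)\Omega] \ge 2$.

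The tree $T$ is then defined to have vertex set $V = \bigsqcup_{n \in \Z} H/\Omega_n$ (left cosets), with an edge between $(n, h\Omega_n)$ and $(n+1, h\Omega_{n+1})$ (well defined since $\Omega_n \subset \Omega_{n+1}$). Each vertex has exactly one neighbor one level above and $[\Omega_{n+1} : \Omega_n] = \delta$ neighbors one level below, so $T$ is $(1+\delta)$-regular with $1+\delta \ge 3$; connectedness comes from $\bigcup_n \Omega_n = H$, and the unique-parent property rules out cycles. Using the normality of $W$ in $S$, one checks that $\pi(s)(\Omega_n) = \Omega_{n - p(s)}$ for every $s \in S$, so the formula $(s, h') \cdot (n, h\Omega_n) := (n - p(s),\, h'\, \pi(s)(h)\, \Omega_{n - p(s)})$ defines a $G$-action on $V$ preserving the edge relation. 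It is transitive ($H$ acts within each level, $\tau$ shifts levels by one, $W$ fixes the base vertex), and a direct computation identifies the stabilizer of $v_0 := (0, \Omega)$ as $V_0 = W \ltimes \Omega$, which is compact open in $G$.

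It then suffices to set $\ell(g) := d_T(g v_0, v_0)$: symmetry and subadditivity come from $G$-invariance of the tree metric, and $\ell$ is proper since balls in $T$ are finite and $V_0$ is compact. By transitivity the orbit map $g \mapsto g v_0$ is a surjection $G \to V$, and by construction $d(g, g') = \ell(g^{-1}g') = d_T(g' v_0, g v_0)$, so this surjection is the required isometry of pseudo-metric spaces. The main delicate point I expect is getting the convention right for how $s \in S$ acts on levels, i.e.~the identity $\pi(s)(\Omega_n) = \Omega_{n - p(s)}$ and the ensuing identification of $V_0$; a secondary nontrivial input is the strict inclusion $\pi(\tau)\Omega \subsetneq \Omega$, without which the tree would collapse.
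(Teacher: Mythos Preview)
Your proof is correct and produces the same tree as the paper, though the two presentations differ. The paper first absorbs $W$ into $H$ so as to reduce to $G=\Z\ltimes H$, and then realizes the tree as the quotient by the right $\Omega$-action of the Cayley graph of $G$ with respect to the compact generating set $M=\Omega\{t,t^{-1}\}\Omega$; the tree property is checked via a Busemann-type argument showing each vertex has a unique neighbour one level up. Your Bruhat--Tits-style construction with vertex set $\bigsqcup_{n}H/\Omega_n$ is an equivalent description of the same $G$-set $G/(W\ltimes\Omega)$, and the identity $\pi(s)\Omega_n=\Omega_{n-p(s)}$ you single out is precisely what underlies the paper's unique-parent computation. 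The paper's route is a bit more economical (the length function is automatically the word length with respect to $M$, and there is no action formula to verify), whereas yours makes the regularity and the valency $1+[\Omega:\pi(\tau)\Omega]$ visible immediately and avoids the preliminary reduction to $S=\Z$.
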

\begin{proof}
Let $W,p,H_0$ be as given by Lemma \ref{algc}, and fix $t\in p^{-1}(\{1\})$. Replacing if necessary $S$ by $\mathbf{Z}$ (but we still write it multiplicatively) and $H$ by $W\ltimes H$ (and $H_0$ by $W\ltimes H_0$), we can suppose that $G=\mathbf{Z}\ltimes H$, so now $p$ is just the identity.

Set $M=H_0\{t,t^{-1}\}H_0$. Consider the Cayley graph of $G$ with respect to $M$. As $M$ is invariant under conjugation by $H_0$, the right action of $H_0$ on $G$ preserves this Cayley graph structure. So we get a graph structure on the quotient $G/H_0$; moreover as the original right action of $H_0$ commutes with the left action of $G$, we get a left action of $G$ on the graph $G/H_0$. As $M$ generates $G$, this graph is connected. To check it is a tree, it is enough to check that $-p$ behaves like a Busemann function, i.e.~for any vertex $v$ with $p(v)=n$ there is only one vertex $v'$ adjacent to $v$ with $p(v')=n+1$. (Indeed, if we have an injective loop, it contains a vertex $v$ with $p(v)$ minimal, and the two adjacent vertices in the loop have $p(v')=p(v)+1$.) By homogeneousness, it is enough to check it when $v=(1,1)$ is the identity. Let $(t,u_1)$ and $(t,u_2)$ be two neighbours of $v$, viewed in $G$. This means that $(t,u_i)$ belong to $H_0tH_0$ for $i=1,2$. So there exist $v_1,v'_1\in H_0$ such that 
$$(t,u_1)=(1,v_1)(t,1)(1,v_2)=(1,v_1)(t,v_2)=(t,\pi(t)v_1.v_2)$$
As $\pi(t)v_1\in H_0$, we obtain that $u_1$ belongs to $H_0$. Similarly $u_2\in H_0$. Now
$$(t,u_1)^{-1}(t,u_2)=(1,u_1^{-1}u_2)\in H_0,$$
so $(t,u_1)$ and $(t,u_2)$ are identified in $G/H_0$.
\end{proof}

\appendix %%%%%%%%%%%%%%%%%%%%%%%%%%%%%%%%%%%%%%%

\section{Direct decomposition for a contraction}\label{a1}

This appendix is needed for the proof of Theorem \ref{mixx} (and thus Theorem \ref{mix}). It can also be of independent interest for the general study of contractions, as it generalizes \cite[Proposition 4.2]{Sie}, which applies to strict contractions.

We refer to Definition \ref{dco} for the definition of a contraction and a vacuum subset.
If a locally compact group $G$ has a maximal normal compact subgroup, such a subgroup is unique and denoted by $W(G)$. Notably, such a subgroup exists when $\pi_0(G)=G/G_0$ is compact and in particular when $G$ is connected. We say that a locally compact group $G$ is {\it elliptic} if every compact subset is contained in a compact subgroup.
We have the following easy lemma.

\begin{lem}\label{cell}
If a locally compact group $G$ has a contraction, then $G/G_0$ is elliptic.
\end{lem}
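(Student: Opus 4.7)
The plan is to push the contracting $\Z$-action on $G$ down to the totally disconnected quotient $G/G_0$ and then invoke Lemma \ref{ac} to produce a compact open vacuum subgroup, which will absorb every compact subset after iterating the contraction in reverse.

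First I would dispose of the trivial case where $G/G_0$ is compact, in which case $G/G_0$ itself is a compact subgroup containing everything and ellipticity is immediate. So assume $G/G_0$ is non-compact. Since $G_0$ is characteristic in $G$ (being preserved by every homeomorphic automorphism), the generator $\alpha$ of the contracting $\Z$-action descends to an automorphism $\bar\alpha$ of $G/G_0$. I would then check that if $U \subset G$ is a compact vacuum subset for $\alpha$, then its image $\bar U \subset G/G_0$ is a compact vacuum subset for $\bar\alpha$: any compact $\bar M \subset G/G_0$ can be lifted to a compact $M \subset G$ with $\pi(M) \supset \bar M$ (cover $\bar M$ by finitely many relatively compact open sets which lift through the open projection $\pi : G \to G/G_0$, and take the union of the closures of such lifts); then $\alpha^n(M) \subset U$ for $n$ large gives $\bar\alpha^n(\bar M) \subset \bar U$. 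So the induced $\Z$-action on $G/G_0$ is contracting.

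Now the unit component $(G/G_0)_0$ is trivial, hence compact, so the hypotheses of Lemma \ref{ac} are satisfied by $G/G_0$. That lemma then furnishes a vacuum subset $\Omega \subset G/G_0$ which is a compact open subgroup. For any compact $\bar K \subset G/G_0$, the vacuum property yields $\bar\alpha^n(\bar K) \subset \Omega$ for $n$ sufficiently large, so $\bar K \subset \bar\alpha^{-n}(\Omega)$; the latter is a compact open subgroup of $G/G_0$, being the image of a compact open subgroup under the automorphism $\bar\alpha^{-n}$. Hence every compact subset of $G/G_0$ lies in a compact subgroup, which is ellipticity. The genuine content of the argument is Lemma \ref{ac}; the rest is the routine verification that the contracting hypothesis descends to the quotient, so I do not anticipate any real obstacle.
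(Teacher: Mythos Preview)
Your proof is correct and follows essentially the same approach as the paper: pass to the totally disconnected quotient, produce a compact open vacuum subgroup there, and pull compact sets back under $\bar\alpha^{-n}$. The only difference is organizational: the paper reproduces the argument of Lemma~\ref{ac} inline (constructing the limit set $L=\bigcap_{n\ge 0}\alpha^n(U)$ and a compact open subgroup containing it) rather than citing it, but the content is the same.
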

\begin{proof}
As $\alpha$ induces a contraction of $G/G_0$, we can suppose that $G$ is totally disconnected and we have to prove that $G$ is elliptic.
Let $U$ be a compact vacuum subset. Define the limit set $L$ as $\bigcap_{n\ge 0}\alpha^n(U)$.
This is a compact subgroup. Note that $\alpha$ (strictly) contracts $G$ modulo $L$, in the sense of \cite{HS}. Since $G$ is totally disconnected, there exists a compact open subgroup $V$ containing $L$. If $K$ is any compact subset of $G$, then $\alpha^n(K)$ is contained in $V$ for some $n$, and therefore $K$ is contained in the compact subgroup $\alpha^{-n}(V)$. 
\end{proof}

\begin{lem}\label{cod}
Let $G$ be a locally compact group with $\pi_0(G)$ compact and $W(G_0)=1$. Then the subgroup generated by $G_0$ and $W(G)$ is naturally isomorphic to the direct product $G_0\times W(G)$ and is open of finite index. 
\end{lem}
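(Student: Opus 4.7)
The plan is to verify three separate claims: that $W(G)\cap G_0$ is trivial, that this forces the product to be direct, and that the resulting subgroup is open of finite index.

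First I would observe that $W(G)\cap G_0$ is a closed subgroup of the compact group $W(G)$, hence compact. Since both $W(G)$ and $G_0$ are normal in $G$, their intersection is also normal in $G$, and a fortiori normal in $G_0$. Thus $W(G)\cap G_0$ is a compact normal subgroup of $G_0$, so by the hypothesis $W(G_0)=1$ it is trivial.

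Next I would use the standard commutator trick: if $A,B$ are normal subgroups of a group with $A\cap B=1$, then $A$ and $B$ commute elementwise, because for $a\in A$, $b\in B$, the commutator $[a,b]=(aba^{-1})b^{-1}\in B$ (by normality of $B$) and simultaneously equals $a(ba^{-1}b^{-1})\in A$ (by normality of $A$), hence lies in $A\cap B=1$. Applying this with $A=G_0$ and $B=W(G)$, the product set $G_0\cdot W(G)$ is a subgroup and the natural multiplication map $G_0\times W(G)\to G_0\cdot W(G)$ is a group isomorphism (injective since $G_0\cap W(G)=1$, and a homomorphism since the two factors commute). This is the subgroup generated by $G_0$ and $W(G)$.

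Finally, for the topological statement, the subgroup $G_0\cdot W(G)$ contains $G_0$, which is open in $G$ (the unit component is open when $G/G_0$ is compact and totally disconnected in the locally compact setting—here $\pi_0(G)$ is compact; but even without total disconnectedness, $G_0$ is always open when $\pi_0(G)$ is compact for locally compact $G$... actually $G_0$ need not always be open, but $G/G_0$ compact plus the standard structure theory gives openness). Thus $G_0\cdot W(G)$ is open. Its quotient $G/(G_0\cdot W(G))$ is both discrete (openness of the subgroup) and compact (as a quotient of the compact group $\pi_0(G)=G/G_0$), hence finite.

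The only delicate point is the openness of $G_0$; if needed I would either assume $G$ is the kind of group considered in the paper (where this holds), or replace "open of finite index" with "of finite index containing $G_0$", using that the subgroup is a union of $G_0$-cosets. Otherwise all steps are routine: the main conceptual content is the triviality of $W(G)\cap G_0$, which reduces everything to the normal-subgroups-commute lemma.
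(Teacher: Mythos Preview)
Your first two steps are correct and match the paper's argument: the intersection $W(G)\cap G_0$ is a compact normal subgroup of $G_0$ and hence trivial by the hypothesis $W(G_0)=1$, and the commutator trick then gives the algebraic direct product. (A minor omission: you should note that the map $G_0\times W(G)\to G$ is also a \emph{topological} isomorphism onto a closed subgroup; this follows since $W(G)$ is compact, making the map proper.)

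The genuine gap is in your third step. The identity component $G_0$ is \emph{not} open in general, even when $\pi_0(G)$ is compact: take $G=\R\times\prod_{n\ge 1}(\Z/2\Z)$, where $G_0=\R\times\{1\}$ has $\pi_0(G)$ compact but $G_0$ is not open. Your argument for openness (and hence for discreteness and finiteness of the quotient) therefore collapses, and neither of your fallbacks rescues it: assuming $G_0$ open changes the lemma's hypotheses, and ``union of $G_0$-cosets'' merely says $G_0\subset G_0W(G)$, which gives compactness of the quotient but not finiteness.

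The paper closes this gap with structure theory you did not invoke: since $\pi_0(G)$ is compact, the Gleason--Yamabe theorem implies that $G/W(G)$ is a Lie group with finitely many components. The quotient $G/G_0W(G)$ is then simultaneously a quotient of this Lie group and of the totally disconnected group $G/G_0$; a totally disconnected Lie group with finitely many components is finite. Since $G_0W(G)$ is already closed (product of a closed subgroup and a compact one), finite index gives openness. This structural input is not optional: the very existence of $W(G)$ under the hypothesis ``$\pi_0(G)$ compact'' already relies on the same circle of results.
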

\begin{proof}
As $W(G_0)=1$, we have $W(G)\cap G_0=\{1\}$, hence $W(G)$ and $G_0$ centralize each other. Since $W(G)$ is compact, this is enough to ensure that the natural homomorphism $G_0\times W(G)\to G_0W(G)$ is a topological isomorphism onto a closed subgroup.

Since $\pi_0(G)$ is compact, $G/W(G)$ is a Lie group with finitely many components. Now $G/G_0W(G)$ is both a quotient of $G/W(G)$ and of $G/G_0$, which is totally discontinuous, and therefore $G/G_0W(G)$ is finite.
\end{proof}

Under the same assumption, we need to characterize $W(G)$ without referring to any normality assumption (since in the sequel $G$ will vary among open subgroups of a given group).

\begin{lem}\label{lce}
Let $G$ be a Lie group with $\pi_0(G)$ finite and $W(G)=1$. Then $G$ has no non-trivial compact subgroup centralizing $G_0$. 
\end{lem}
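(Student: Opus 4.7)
The proof proceeds in two main stages: first showing that any such $K$ must be finite, and then arguing by induction on $[G:G_0]$ that $K$ is in fact trivial.

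For finiteness, set $C := Z_G(G_0)$. Since $G_0$ is connected, an element of $G$ centralizes $G_0$ if and only if its adjoint action on $\mathfrak{g}$ is trivial; thus $C = \Ker(\Ad_G : G \to \GL(\mathfrak{g}))$ is a closed normal subgroup of $G$ containing $K$. Its identity component coincides with $Z(G_0)_0$, a connected abelian Lie group. The maximal compact subgroup of $Z(G_0)$ is characteristic in $Z(G_0)$, hence in $G_0$, so normal in $G$; by $W(G)=1$ it is trivial. Consequently $C_0 \cong \mathbf{R}^k$ is a vector group and, more generally, $Z(G_0)$ is torsion-free. Since $K$ is compact with $K \cap C_0 = 1$, the subgroup $K$ embeds into the discrete quotient $C/C_0$ and is therefore finite.

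For triviality, I induct on $|G/G_0|$. The base case $G = G_0$ gives $K \subseteq Z(G_0)$, which is torsion-free, so $K=1$. For the inductive step, consider the closed normal subgroup $\widetilde G := CG_0$ of $G$; its image in $G/G_0$ equals $\bar C := C/Z(G_0)$, the kernel of the outer action $G/G_0 \to \mathrm{Out}(G_0)$. Since $\widetilde G$ is normal in $G$, the characteristic subgroup $W(\widetilde G)$ is normal in $G$, hence contained in $W(G)=1$; so $\widetilde G$ itself satisfies the hypothesis of the lemma. If $\bar C \ne G/G_0$, then $|\widetilde G/G_0| < |G/G_0|$, and the inductive hypothesis applied to the compact $K \subseteq \widetilde G$ centralizing $\widetilde G_0 = G_0$ yields $K=1$.

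The hard remaining case is $\widetilde G = G$, where the entire action of $G/G_0$ on $G_0$ is by inner automorphisms. In this situation $G/Z(G_0)$ splits canonically as an internal direct product $\bar C \times (G_0/Z(G_0))$: indeed, $C$ and $G_0$ commute and intersect in $Z(G_0)$, so the map $(c,g_0) \mapsto cg_0$ descends to an isomorphism on the quotients. Since $K \cap Z(G_0)=1$, $K$ injects into the finite factor $\bar C$. The plan is to show that any non-trivial $K$ in this setting forces the existence of a non-trivial compact normal subgroup of $G$, contradicting $W(G)=1$: one combines the product splitting of $G/Z(G_0)$ with a Schur-type argument on the torsion of the discrete group $C/C_0$ (whose center contains the finite-index subgroup $Z(G_0)/C_0$, so $[C/C_0,C/C_0]$ is finite) to locate the normal closure $\widetilde K$ of $K$ inside a compact subgroup of $G$. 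The main obstacle is controlling this normal closure when $Z(G_0)$ has non-compact torsion-free part (either the vector part $\mathbf{R}^k$ or a free discrete part $\mathbf{Z}^l$): here one must check that the closure of $\widetilde K$ meets $Z(G_0)$ trivially, which uses both the direct-product splitting and the triviality of all $G$-invariant compact subgroups forced by $W(G)=1$.
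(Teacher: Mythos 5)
Your first stage (finiteness of $K$) and the reduction via $\widetilde G := Z_G(G_0)\,G_0$ are sound, but the proof is not complete: the case $\widetilde G=G$, which you yourself single out as ``the hard remaining case'', is only an announced plan (``one combines\dots'', ``one must check\dots'') with explicitly unresolved difficulties, namely controlling the normal closure of $K$ when $Z(G_0)$ has a non-compact torsion-free part. As written this is a genuine gap, and it is precisely where the content of the lemma lies in that case. (A smaller unjustified point: the existence of a maximal compact subgroup of $Z(G_0)$ uses that $Z(G_0)/Z(G_0)_0$ is finitely generated; this is true, via the covering $G_0/Z(G_0)_0\to G_0/Z(G_0)$, but deserves a word.)

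The gap is fillable, and in a way that makes your induction and case distinction unnecessary. Set $C=Z_G(G_0)$. Then $C\cap G_0=Z(G_0)$ is central in $C$ and of finite index, since $C/(C\cap G_0)$ embeds into $\pi_0(G)$. So Schur's theorem applied to $C$ itself --- not to $C/C_0$, which is what creates your ``obstacle'' with the vector part --- gives that $[C,C]$ is finite; being characteristic in the normal subgroup $C$, it is a compact normal subgroup of $G$, hence trivial by $W(G)=1$, so $C$ is abelian. Now for any compact $K\le C$, conjugation of $K$ factors through the finite group $G/G_0$ (because $G_0$ centralizes $C$), so the normal closure of $K$ is the product of finitely many compact subgroups of the abelian group $C$, hence a compact normal subgroup of $G$, hence trivial. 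For comparison, the paper's proof takes a different and shorter route with no induction: take $K$ maximal among compact subgroups centralizing $G_0$ (all such are finite of order at most $|\pi_0(G)|$, since they meet $G_0$ trivially), choose a maximal compact subgroup $S\supseteq K$; since $G=SG_0$ and $G_0$ centralizes $K$, every conjugate of $K$ lies in the compact group $S\cap Z_G(G_0)$, which equals $K$ by maximality, so $K$ is normal and therefore trivial. You should either carry out the Schur argument on $C$ as above or adopt the maximality trick; the current final paragraph does not yet prove anything.
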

\begin{proof}
Let $K$ be a compact subgroup centralizing $G_0$; let us assume that $K$ is maximal for this property. Then $K\cap G_0$ is compact and central in $G_0$ so is trivial. In particular, $K$ is finite. Let $S$ be a maximal compact subgroup of $G$ containing $K$, and let $N$ be the centralizer of $G_0$ in $G$. Since $G=SG_0$ \cite[Theorem~3.1]{Mo}, any conjugate of $K$ is contained in $S$. Moreover since $K$ is contained in $N$ which is normal, any conjugate of $K$ is contained in $S\cap N$. Now $S\cap N$ is a compact subgroup centralizing $G_0$, so by maximality of $K$, we have $K=S\cap N$ and therefore $K$ contains all its conjugates, so is normal in $G$. Since $W(G)=1$, this implies $K=1$. 
\end{proof}

\begin{lem}\label{inw}
Let $G$ be a locally compact group with $\pi_0(G)$ compact and $W(G_0)=1$. Then any compact subgroup of $G$ centralizing $G_0$ is contained in $W(G)$.
\end{lem}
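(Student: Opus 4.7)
The plan is to reduce to the already established Lemma \ref{lce} by passing to the quotient $G/W(G)$. First, I would verify that $W(G)$ exists in this setting: since $\pi_0(G)$ is compact, any directed union of normal compact subgroups has compact closure in $G$, so a unique maximal normal compact subgroup is well defined.

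Next, I would check that the hypotheses survive the quotient. Since $W(G_0)=1$ and $W(G)\cap G_0$ is a compact normal subgroup of $G_0$, we have $W(G)\cap G_0=1$, and therefore $(G/W(G))_0 = G_0 W(G)/W(G)\cong G_0$. In particular $W((G/W(G))_0)=W(G_0)=1$. Moreover, if $N$ is a closed subgroup of $G$ containing $W(G)$ such that $N/W(G)$ is compact and normal in $G/W(G)$, then $N$ itself is normal and compact (extension of a compact by a compact), hence contained in $W(G)$ by maximality; thus $W(G/W(G))=1$. By Lemma \ref{cod} applied to $G/W(G)$, the subgroup $(G/W(G))_0\cdot W(G/W(G))=(G/W(G))_0$ is open of finite index, so $G/W(G)$ is a Lie group with finitely many connected components.

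Now let $K\subset G$ be a compact subgroup centralizing $G_0$. Its image $\bar K$ in $G/W(G)$ is compact and centralizes $(G/W(G))_0$ (which, as observed, is the image of $G_0$). Applying Lemma \ref{lce} to $G/W(G)$, which is a Lie group with finitely many components and trivial maximal normal compact subgroup, yields $\bar K=1$, i.e.\ $K\subset W(G)$, as required.

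The only step that requires care is verifying that the properties $\pi_0$ compact, $W(G_0)=1$, and the finiteness of $\pi_0(G/W(G))$ all survive the quotient; I expect the compactness of $\pi_0(G)$ to be essential here in ensuring that $W(G)$ is well-defined and that $G/W(G)$ is a Lie group with finitely many components, so that Lemma \ref{lce} can indeed be invoked.
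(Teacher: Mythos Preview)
Your approach matches the paper's exactly: pass to $G/W(G)$ and invoke Lemma~\ref{lce}. One small correction: the identification $(G/W(G))_0 = G_0W(G)/W(G)$ does not follow merely from $W(G)\cap G_0=1$; rather, apply Lemma~\ref{cod} directly to $G$ (not to the quotient) to see that $G_0W(G)$ is open of finite index, whence its image in $G/W(G)$ is a connected open subgroup and therefore equals the identity component---after this your argument goes through verbatim.
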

\begin{proof}
Let $K$ be a compact subgroup with $[K,G_0]=1$. By Lemma \ref{lce}, the image of $K$ in $G/W(G)$ is trivial.
\end{proof}

Therefore, when $\pi_0(G)$ is compact and $W(G_0)=1$ holds, $W(G)$ appears as the maximal compact subgroup centralizing $G_0$.

\begin{thm}\label{thm_decompo}
Let $G$ be a locally compact group with $\pi_0(G)$ elliptic and $W(G_0)=1$. Then $G$ has a unique maximal subgroup $W'(G)$ among those closed elliptic subgroups centralizing $G_0$. Moreover, the natural map $G_0\times W'(G)\to G_0W'(G)$ is a topological isomorphism onto an open subgroup of $G$.
\end{thm}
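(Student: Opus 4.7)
The plan is to express $G$ as a directed union of open subgroups $G_i$ with $\pi_0(G_i)$ compact, apply Lemmas~\ref{cod} and~\ref{inw} to each $G_i$, and then patch the various $W(G_i)$ together. Since $\pi_0(G)=G/G_0$ is totally disconnected, locally compact, and elliptic, van Dantzig combined with ellipticity shows every compact subset of $G/G_0$ lies in a compact \emph{open} subgroup (take a compact subgroup containing the compact set, then enlarge it by generating together with a van Dantzig compact open subgroup). Hence $G/G_0=\bigcup_i K_i$ as a directed union of compact open subgroups, and pulling back gives $G=\bigcup_i G_i$ with each $G_i$ open, $G_{i,0}=G_0$, and $\pi_0(G_i)=K_i$ compact. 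Lemma~\ref{cod} applied to each $G_i$ yields $W(G_i)$ compact with $W(G_i)\cap G_0=1$, and a topological isomorphism $G_0\times W(G_i)\to G_0W(G_i)$ onto a clopen, finite-index subgroup of $G_i$; the direct-product structure forces $W(G_i)$ to centralize $G_0$.

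Next I would set up the key compatibility between these $W(G_i)$. Whenever $G_i\subset G_j$, $W(G_i)$ is a compact subgroup of $G_j$ centralizing $G_0$, so Lemma~\ref{inw} gives $W(G_i)\subset W(G_j)$; symmetrically $W(G_j)\cap G_i\subset W(G_i)$, whence
\[
W(G_j)\cap G_i = W(G_i).
\]
I then define $W'(G):=\bigcup_i W(G_i)$ and use this identity to show $W'(G)\cap G_i=W(G_i)$ for each $i$ (any $w$ in the intersection lies in some $W(G_k)$ with $G_k\supset G_i$, hence in $W(G_k)\cap G_i=W(G_i)$). Three consequences follow quickly: $W'(G)$ is closed in $G$ (a convergent net $w_\alpha\to g$ eventually lies in some open $G_j$, hence in the compact $W(G_j)$); $W'(G)$ is elliptic (any compact subset lies in some $G_j$, hence in $W(G_j)$); and $W'(G)$ centralizes $G_0$ with $W'(G)\cap G_0=1$.

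Maximality is essentially a restatement of Lemma~\ref{inw}: if $H$ is any closed elliptic subgroup of $G$ centralizing $G_0$, then ellipticity expresses $H$ as a directed union of its compact subgroups, and each such compact subgroup sits in some $G_i$ and, by Lemma~\ref{inw}, inside $W(G_i)\subset W'(G)$; hence $H\subset W'(G)$. For the direct-product statement, $G_0W'(G)=\bigcup_i G_0W(G_i)$ is open as a union of open subsets of $G$, and the multiplication map $G_0\times W'(G)\to G_0W'(G)$ is a continuous bijection. Openness of $W(G_i)$ in $W'(G)$ (from $W'(G)\cap G_i=W(G_i)$ and openness of $G_i$ in $G$) lets me invoke Lemma~\ref{cod} on each open piece $G_0\times W(G_i)\to G_0W(G_i)$, where the map is already a topological isomorphism; patching these local inverses yields continuity of the global inverse. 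The step I anticipate as the main obstacle is the compatibility identity $W(G_j)\cap G_i=W(G_i)$: it requires Lemma~\ref{inw} in both directions (with the fact that $W(G_i)$ centralizes $G_0$ only available after Lemma~\ref{cod}), and it is precisely this identity that rules out pathological behavior in the directed union and makes $W'(G)$ simultaneously closed, elliptic, and piecewise directly decomposable.
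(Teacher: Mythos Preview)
Your proposal is correct and follows essentially the same approach as the paper: write $G$ as a directed union of open subgroups $G_i$ with $\pi_0(G_i)$ compact, establish the compatibility $W(G_j)\cap G_i=W(G_i)$ via Lemma~\ref{inw}, set $W'(G)=\bigcup_i W(G_i)$, and read off all the required properties. The only differences are cosmetic---the paper gets one inclusion in the compatibility from normality rather than Lemma~\ref{inw}, verifies the topological isomorphism by checking properness rather than patching local inverses, and argues maximality via compactness of $H\cap G_i$ rather than via compact subgroups of $H$---but the architecture is the same.
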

\begin{proof}
Write $G$ as the union of a net of open subgroups $G_i$ with $\pi_0(G_i)$ compact. If $G_i\subset G_j$, then 
\begin{equation}\label{eee}
W(G_j)\cap G_i= W(G_i)\end{equation}
Indeed, $W(G_j)\cap G_i\subset W(G_i)$ by definition of $W$, and the reverse inclusion follows from Lemma \ref{inw} applied inside $G_j$. Set $W'(G)=\bigcup W(G_i)$. From (\ref{eee}) it follows that $W'(G)\cap G_i=W(G_i)$ for all $i$, and in particular $W'(G)$ is closed. Clearly it is elliptic and centralizes $G_0$. Conversely if $H$ is closed, elliptic and centralizes $G_0$, then for all $i$ $H\cap G_i$ is compact and therefore by Lemma \ref{inw} is contained in $W(G_i)$, hence in $W'(G)$, so that $H$ is contained in $W'(G)$.

To check that the natural map $G_0\times W'(G)\to G$ is a topological isomorphism onto its image, it is enough to check that it is a proper injective homomorphism. It is injective since $G_0\cap W'(G)=1$. To check properness, let $((g_n,w_n))$ be a sequence with $g_n\in G_0$, $w_n\in W'(G)$, and $g_nw_n\to 1$. Then since $G_i$ is open, eventually $g_nw_n\in G_i$, so $w_n\in W(G_i)$, and therefore properness is reduced to properness in restriction to $G_0\times W(G_i)$, which was established in Lemma \ref{cod}.

Finally $G_0W'(G)$ contains the open subgroup $G_0W(G_i)$ of the open subgroup $G_i$, so is open itself in $G$.
\end{proof}

\begin{cor}\label{confi}
Let $G$ be a locally compact group with $W(G_0)=1$, and assume that $G$ has a contraction. Then $G$ has a characteristic subgroup $W'(G)$ satisfying the properties stated in Theorem \ref{thm_decompo}, and $G_0W'(G)$ has finite index.
\end{cor}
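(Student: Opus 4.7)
The plan is first to invoke Theorem \ref{thm_decompo} to produce $W'(G)$, and then to use the contraction itself to force the finite-index claim. To apply Theorem \ref{thm_decompo} I need $\pi_0(G)$ elliptic, which is exactly the conclusion of Lemma \ref{cell} from the hypothesis that $G$ admits a contraction. Combined with $W(G_0)=1$, the theorem then supplies $W'(G)$, uniquely characterized as the maximal closed elliptic subgroup of $G$ centralizing $G_0$, together with the information that the multiplication map $G_0\times W'(G)\to G_0W'(G)$ is a topological isomorphism onto an open subgroup.

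Next I would check that $W'(G)$ is characteristic. Since $G_0$ is characteristic, any topological automorphism of $G$ preserves $G_0$ and therefore permutes closed elliptic subgroups centralizing $G_0$, which forces it to preserve the unique maximal one. So $W'(G)$ is characteristic, and the same is then true of the product $G_0W'(G)$.

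For the finite-index assertion, I would pass to the quotient $Q:=G/G_0W'(G)$. Because $G_0W'(G)$ is open in $G$, $Q$ is discrete; because $G_0W'(G)$ is characteristic, the contracting automorphism $\alpha$ descends to an automorphism $\bar\alpha$ of $Q$. If $U$ is a compact vacuum subset for $\alpha$ in $G$, its image $\bar U$ in $Q$ is finite, and any finite (hence compact) subset $K\subset Q$ lifts to a finite subset $\tilde K\subset G$ with $\alpha^k(\tilde K)\subset U$ for $k$ large; projecting yields $\bar\alpha^k(K)\subset\bar U$, so $\bar U$ is a vacuum subset for $\bar\alpha$. The main (elementary) observation is then that a discrete group admitting a finite vacuum subset $V$ must itself be finite: otherwise one picks a finite subset $M$ with $|M|>|V|$, and the inclusion $\bar\alpha^k(M)\subset V$ contradicts the injectivity of $\bar\alpha^k$. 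Hence $Q$ is finite, as required.

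I do not anticipate any serious obstacle. The only step calling for a moment of care is the transport of the vacuum condition from $G$ down to the discrete quotient, but this reduces to the openness of the quotient map together with the fact that finite (equivalently compact) subsets of $Q$ lift to finite subsets of $G$.
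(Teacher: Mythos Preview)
Your proposal is correct and follows essentially the same route as the paper: invoke Lemma~\ref{cell} to get $\pi_0(G)$ elliptic, apply Theorem~\ref{thm_decompo}, note that $W'(G)$ (and hence $G_0W'(G)$) is characteristic, and conclude by observing that the induced contraction on the discrete quotient $G/G_0W'(G)$ forces it to be finite. The paper compresses this last step into a single clause, while you spell out the cardinality argument; the content is the same.
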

\begin{proof}
By Lemma \ref{cell}, $G/G_0$ is elliptic and Theorem \ref{thm_decompo} applies. It is obvious from its statement that the subgroup $W'(G)$ is characteristic, and in particular the open subgroup $G_0W'(G)$ is characteristic as well, so is stable under the given contraction. Therefore the discrete group $G/G_0W'(G)$ also has a contraction, so must be finite.
\end{proof}

This contains \cite[Proposition 4.2]{Sie} as a particular case.

\begin{cor}
Let $G$ be a locally compact group and assume that $G$ has a strict contraction. Then $G$ decomposes (canonically) as a product of characteristic subgroups $G=G_0\times W'(G)$.
\end{cor}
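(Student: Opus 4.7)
The plan is to reduce to Corollary \ref{confi} and then use the strict (versus merely ordinary) contraction hypothesis to upgrade ``finite index'' to ``everything''. Let $\alpha$ denote the given strict contraction of $G$. Since $G_0$ is characteristic, $\alpha$ restricts to a topological group automorphism of $G_0$, and the restriction is still a strict contraction. My first step is to show that $W(G_0)=1$, so that Corollary \ref{confi} applies. The subgroup $W(G_0)$ is characteristic in $G_0$, hence $\alpha$-invariant; but a strict contraction sends every compact set into every prescribed neighborhood of $1$, and an $\alpha$-invariant compact subgroup equals all its forward iterates, so it must lie in every neighborhood of $1$, hence is trivial.

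Now Corollary \ref{confi} produces a characteristic subgroup $W'(G)$ with $G_0\cap W'(G)=1$, such that the natural map $G_0\times W'(G)\to G_0W'(G)$ is a topological isomorphism onto an \emph{open} subgroup of $G$ of finite index. What remains is to prove the equality $G=G_0W'(G)$. Set $Q=G/G_0W'(G)$, a finite discrete group; since $G_0W'(G)$ is characteristic, $\alpha$ descends to an automorphism $\bar\alpha$ of $Q$.

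The key observation is that strict contraction forces $\alpha^n(g)\to 1$ in $G$ for every single element $g\in G$: indeed, for any neighborhood $V$ of $1$ the hypothesis gives $\alpha^n(\{g\})\subset V$ for $n$ large enough. Because $G_0W'(G)$ is open in $G$, we conclude that for each $g\in G$, $\alpha^n(g)\in G_0W'(G)$ eventually, i.e.\ $\bar\alpha^n(\bar g)=\bar 1$ in $Q$ for $n$ large enough. But $\bar\alpha$ is a group automorphism of $Q$, so $\bar\alpha^n(\bar g)=\bar 1$ forces $\bar g=\bar 1$. Hence $Q$ is trivial, so $G=G_0W'(G)$, and combining with the topological isomorphism above yields the desired decomposition $G=G_0\times W'(G)$ into characteristic subgroups.

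I do not foresee a real obstacle: the content is entirely contained in Corollary \ref{confi}, and the only new ingredient is the trivial but crucial observation that strict contraction implies pointwise convergence $\alpha^n(g)\to 1$, which kills any discrete quotient on which $\alpha$ acts by automorphisms.
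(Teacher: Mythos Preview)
Your proof is correct and follows essentially the same route as the paper's: show $W(G_0)=1$ because a compact $\alpha$-invariant subgroup under a strict contraction must be trivial, apply Corollary~\ref{confi}, and then kill the finite quotient $G/G_0W'(G)$ using that a strict contraction descends to a strict contraction on any characteristic quotient. The paper compresses each of these steps into a single clause, while you have spelled them out, but the content is the same.
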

\begin{proof}
The strict contraction $\alpha$ strictly contracts the compact normal subgroup $W(G_0)$ and therefore $W(G_0)=1$, so Corollary \ref{confi} applies. Moreover $\alpha$ induces a strict contraction of the finite group $G/(G_0W'(G))$, so this is the trivial group, that is, $G=G_0W'(G)$.
\end{proof}

\section{Quasi-isometry invariance}\label{qii}

The invariance under quasi-isometries of the $L^p$-cohomology was obtained in all degrees in \cite{Pan95}. As that preprint from 1995 is not yet published, and the case of degree one, used in the proof of the vanishing results of the $L^p$-cohomology in Theorem \ref{main}, is considerably simpler than in higher degree, we include a full proof.

The following coarse notion of (first) $L^p$-cohomology in degree one is
essentially due to \cite{Pan95} (see also the chapter about
$L^p$-cohomology in \cite[Chap.~8]{Gro}, and \cite{T1} for the case of degree 1). Here, we use the notation of \cite{T1}, but we extend the definition to spaces which are not necessarily 1-geodesic. This will allow us to apply our result to non-compactly generated locally compact groups. The fact that this definition is equivalent to the one we gave in introduction follows from the proof of \cite[Theorem 5.1]{T1}. 

Let $X=(X,d,\mu)$ be a metric measure space, and let $p\geq 1$.
For every $s>0$, we write $\Delta_s=\{(x,y)\in X^2, d(x,y)\leq s\}.$

First, let us introduce the $p$-Dirichlet space $D_p(X)$.
\begin{itemize}
\item The space $D_p(X)$ is the set of measurable functions $f$ on
$X$ such that
$$\int_{\Delta_s}|f(x)-f(y)|^pd\mu(x)d\mu(y)<\infty$$
for every $s>0$.
\item Let us equip $D_p(X)$ with
the topology induced by the following semi-norms (for all ${s>0}$)  
$$\|f\|_{D_p,s}=\left(\int_{\Delta_s}|f(x)-f(y)|^pd\mu(x)d\mu(y)\right)^{1/p}.$$

\end{itemize}
\begin{defn}
The first $L^p$-cohomology of $X$ is the space
$$H^1_p(X)=D_p(X)/(L^p(X)+\mathbf{R}),$$ and the first reduced $L^p$-cohomology
of $X$ is the space
$$\overline{H^1}_p(X)=D_p(X)/\overline{(L^p(X)+\mathbf{R})}^{D_p(X)}.$$
\end{defn}

Let $X= (X,d,\mu)$ be a metric measure space satisfying the following ``bounded geometry" condition:
for all $x\in X$, and $r>0$, 
$$v(r)\leq \mu(B_X(x,r))\leq V(r),$$ where $v$ and $V$ are increasing positive functions on $(0,\infty).$ It is not difficult to check that this condition is automatically satisfied if the group of measure-preserving isometries of $X$ acts cocompactly. In particular, a locally compact group equipped with a left-invariant, proper metric has bounded geometry (properness is assumed here only  for the balls to have finite volume).

We will use the notation $f\preceq_T g$ when $f\leq Cg$ for some constant $C$ depending ``only" on $T$.

\begin{thm}
Suppose that two metric measure spaces $X$ and $Y$ with bounded geometry are coarse equivalent. Then there exists a topological isomorphism between their first $L^p$-cohomology. 
\end{thm}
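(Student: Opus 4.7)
The plan is to reduce the statement to a coarse equivalence between uniformly discrete bounded-geometry spaces equipped with counting measure, where the pullback of a cocycle by a coarse map makes literal sense. For this I would first establish the following discretization lemma: if $(X,d,\mu)$ has bounded geometry and $X_0\subset X$ is a maximal $\epsilon$-separated subset (automatically $\epsilon$-dense by maximality), then $(X_0,d,\#)$ is again a bounded-geometry metric measure space and its first (reduced and unreduced) $L^p$-cohomology is canonically isomorphic, as a topological vector space, to that of $(X,d,\mu)$. The map $D_p(X)\to D_p(X_0)$ is built by local averaging, $u\mapsto \bar u$ with $\bar u(x_0)=\mu(B(x_0,\epsilon))^{-1}\int_{B(x_0,\epsilon)}u\,d\mu$, and a quasi-inverse $D_p(X_0)\to D_p(X)$ by choosing a bounded-geometry Lipschitz partition of unity subordinate to $\{B(x_0,2\epsilon)\}_{x_0\in X_0}$. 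The estimate
$$\|\bar u\|_{D_p(X_0),s}^p \preceq_{s,\epsilon} \|u\|_{D_p(X),s+2\epsilon}^p$$
and its dual follow from Fubini together with the two-sided ball-volume bound $v(\epsilon)\le \mu(B(\cdot,\epsilon))\le V(\epsilon)$; checking that the two compositions differ from the identity by an element of $L^p+\R$, continuously in the $D_p$-topology, gives the announced topological iso.

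Once the reduction is in place, a coarse equivalence $X\simeq Y$ yields a coarse equivalence between the chosen nets $X_0$ and $Y_0$. Choose maps $f:X_0\to Y_0$ and $g:Y_0\to X_0$ with $\sup_x d(gf(x),x)\le C$, $\sup_y d(fg(y),y)\le C$, and such that for each $s$ there exists $s'$ with $(f\times f)(\Delta_s)\subset\Delta_{s'}$ (and similarly for $g$). For $u:Y_0\to\R$ set $f^*u=u\circ f$. The multiplicity bound $\sup_{y_0}\#f^{-1}(y_0)<\infty$, a direct consequence of applying bounded geometry to $B(g(y_0),C)\cap X_0$, combined with the above, gives
$$\|f^*u\|_{D_p(X_0),s}^p \preceq_{s,f} \|u\|_{D_p(Y_0),s'}^p,$$
so that $f^*$ is a continuous linear map $D_p(Y_0)\to D_p(X_0)$ sending $L^p+\R$ into $L^p+\R$; in particular it descends continuously to the (reduced) first $L^p$-cohomology.

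To see that $f^*$ is an isomorphism, I would compute $(gf)^*u-u$ for $u\in D_p(X_0)$: since $d(gf(x),x)\le C$ for every $x\in X_0$,
$$\|(gf)^*u-u\|_{L^p(X_0)}^p = \sum_{x\in X_0}|u(gf(x))-u(x)|^p \le \|u\|_{D_p(X_0),C}^p < \infty,$$
so $(gf)^*u-u\in L^p(X_0)$ and $(gf)^*$ acts as the identity on both $H^1_p(X_0)$ and $\overline{H^1_p}(X_0)$, continuously in the $D_p$-topology. The symmetric calculation for $fg$ shows that $f^*$ and $g^*$ are mutually inverse topological isomorphisms, yielding the desired isomorphism on the $L^p$-cohomology of $X$ and $Y$ after undoing the discretization. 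The hard part will be the discretization step itself: constructing the extension back into $D_p(X)$ and matching the two topologies uses bounded geometry in an essential way, whereas the coarse-equivalence step on the discrete side is essentially formal once the multiplicity estimate is in hand.
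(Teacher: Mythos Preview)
Your proposal is correct and follows essentially the same strategy as the paper: reduce to discrete nets via local averaging in one direction and a bounded-geometry partition of unity in the other, then handle the coarse equivalence on the discrete side. The technical heart (the discretization step) is identical in both.

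The one organizational difference is in the discrete step. The paper observes that one can choose the net $X_d\subset X$ so that the given coarse equivalence $\varphi$ restricts to a \emph{bijection} from $X_d$ onto a net in $Y$; the discrete comparison is then literally trivial (pullback by a bijection). You instead discretize $X$ and $Y$ independently and treat the resulting non-bijective coarse maps $f,g$ directly, using the multiplicity bound $\sup_{y_0}\#f^{-1}(y_0)<\infty$ to control $f^*$ on $D_p$ and $L^p$. Both routes are short and correct; the paper's bijection trick avoids the multiplicity estimate, while your version avoids having to tailor the discretization to the map $\varphi$.
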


\begin{proof} First, note that if the spaces are discrete  (equipped with the counting  measure) and if  $\varphi$ is a {\it bijective} coarse equivalence from $X$ to $Y$, then $\varphi$ induces a topological isomorphism on the first $L^p$-cohomology. Therefore, owing to the following lemma,  we can suppose that $Y$ is a {\it discretization} of $X$, i.e. a sub-metric space of $X$ such that $d(y,y')\geq 1$ for all $y,y'\in Y$, and such that the inclusion $Y\subset X$ is a coarse equivalence.  This is equivalent to the existence of some $R>0$  such that $X\subset \bigcup_{y\in Y} B_X(y,R).$ Moreover, since $X$ has bounded geometry, at most $N=N(R)$ of these balls intersect.

\begin{lem}
Let $\varphi:\;X\to Y$ be a coarse equivalence. There exists a discretization $X_d$ of $X$, such that the restriction of $\varphi$ to $X_d$ is one to one, and such that $\varphi(X_d)$ is a discretization of $Y$.
\end{lem}
\begin{proof}
By definition of a coarse equivalence, there exists $R>0$ such that points at distance at least $R$ in $X$ are mapped to points at distance at least $1$ in $Y$. Hence the lemma follows by taking for $X_d$ a  maximal $R$-separated net in $X$.
\end{proof}

The two main ingredients of the proof, that we will use thoroughly without mentioning them, are
H\"older's inequality and  the fact that $X$ and $Y$ have bounded geometry.

For every $T>0$, let us define an operator $P_T$ on $L^p(X)$ as follows:
$$P_Tf(x_0)= \mathbf{E}_{B_X(x_0,T)}f,$$
where $\mathbf{E}_Af:=\frac{1}{\mu(A)}\int_A f(x)d\mu(x).$ 
Also, we denote $\nabla f(x,x'):= f(x)-f(x').$

\begin{lem}
For every $T\geq 0$, the linear  map $\psi_T(f)(y)=P_Tf(y)$, from $D_p(X)$ to $D_p(Y)$, induces a continuous map on the $L^p$-cohomology. 
\end{lem}
\begin{proof}  Note that $\psi_T$ obviously preserves constant functions and it is easy to see that it is bounded on $L^p$. Let $s\geq T.$ We will sketch the proof that $\psi_T$ is continuous from $D_p(X)$ to $D_p(Y)$ (the details are straightforward and therefore left to the reader).
\begin{eqnarray*}
\sum_{d(z,z')\leq s}|P_Tf(z)-P_Tf(z')|^p &\preceq_s & \sum_{d(z,z')\leq s}\left(\mathbf{E}_{B_X(z,2s)\times B_X(z,2s)}|\nabla f |\right)^p\\
& \leq &\sum_{d(z,z')\leq s}\mathbf{E}_{B_X(z,2s)\times B_X(z,2s)}(|\nabla f |^p)\\
& \preceq_s  &\int_{d(x,x')\leq 5s}|f(x)-f(x') |^pd\mu(x)d\mu(x').\qedhere
\end{eqnarray*}
 \end{proof}
 
Now, let us choose $T\geq R$, so that $X\subset \bigcup_{y\in Y} B_X(y,T).$ Let $$S_T(x)=\sum_{z\in Y}1_{B_X(z,T)}, \; \forall x\in X,$$
which satisfies $1\leq S_T\leq N(T)<\infty$, and for all $z\in Y$,
$$S_T^z=1_{B_X(z,T)}/S_T.$$
Note that $(S_T^z)$ forms a partition of unity for $X$.

\begin{lem}
For every $T\geq R$, the linear map $\phi_T(f)=\sum_{z\in Y} f(z)S_T^z$, from $D_p(Y)$ to $D_p(X)$, induces a continuous map on the $L^p$-cohomology. 
\end{lem}
 
\begin{proof} It is clear that $\phi_T$ preserves constant functions, and the proof that it is bounded on $L^p$ is easy and left to the reader. Let us sketch the proof that $\phi_T$ is continuous on $D_p$.  Let $(x,x')\in \Delta_s(X)$ for some $s\geq T.$
\begin{eqnarray*}
|\phi_T(f)(x)-\phi_T(f)(x')|^p & = & |\sum_{z,z'\in Y}(f(z)-f(z'))S^z_T(x)S^{z'}_T(x')|^p\\
& = &  |\sum_{z,z'\in B_X(x, 3s)}(f(z)-f(z'))S^z_T(x)S^{z'}_T(x')|^p\\
& \preceq_s &  \sum_{z,z'\in B_X(x, 3s)}|f(z)-f(z')|^p. 
\end{eqnarray*}
Accordingly
\begin{eqnarray*}
\int_{d(x,x')\leq s}|\phi_T(f)(x)-\phi_T(f)(x')|^p & \preceq_s & \sum_{d(z,z')\leq 6s}|f(z)-f(z')|^p.\qedhere\end{eqnarray*}
\end{proof}

\begin{lem}
For every $T\geq R$, the maps $\phi_T\circ \psi_T$, and $\psi_T\circ\phi_T$ induce the identity on the $L^p$-cohomology of respectively $X$ and $Y$. 
\end{lem}
For every $x_0\in X$, we have  
\begin{eqnarray*}
|f(x_0)-\phi_T\circ\psi_T(f)(x_0)|^p &= & |\sum_{z\in Y}[f(x_0)-P_Tf(z)]S_T^z(x_0)|^p\\
& = & |\sum_{z\in Y}\mathbf{E}_{B_X(z,T)}[f(x_0)-f]S_T^z(x_0)|^p\\
& = & |\sum_{d(z,x_0)\leq T}\mathbf{E}_{B_X(z,T)}[f(x_0)-f]S_T^z(x_0)|^p\\
&\preceq_T&  \sum_{d(z,x_0)\leq T}|\mathbf{E}_{B_X(z,T)}[f(x_0)-f]|^p\\
&\leq&  \sum_{d(z,x_0)\leq T}\mathbf{E}_{B_X(z,T)}|f(x_0)-f|^p\\
& \preceq_T & \mathbf{E}_{B_X(x_0,2T)}|f(x_0)-f|^p.
\end{eqnarray*}
It follows that $$\|f-\phi_T\circ\psi_T(f)\|_p\preceq_T \|f\|_{D_{p,2T}},$$ 
and therefore $\phi_T\circ\psi_T$ induces the identity on $H^1_p(X).$

For every $z_0\in Y$, 
\begin{eqnarray*}
|f(z_0)-\psi_T\circ\phi_T(f)(z_0)|^p &= & |P_T[f(z_0)-\phi_T(f)](z_0)|^p\\
&=& |P_T[\sum_{z\in Y}(f(z_0)-f(z))S_T^z](z_0)|^p\\
&=& |\sum_{z\in Y}(f(z_0)-f(z))\mathbf{E}_{B_X(z_0,T)}S^z_T|^p\\
&\preceq_T& \sum_{d(z,z_0)\leq 2T}|f(z_0)-f(z))|^p.
\end{eqnarray*}
We used the fact that $\mathbf{E}_{B_X(z_0,T)}S^z_T\preceq_T 1_{B(z_0,2R)}(z).$
We deduce from the above that
$$\|f-\psi_T\circ\phi_T(f)\|_p\preceq_T \|f\|_{D_{p,2T}}.$$
Which implies that $\psi_T\circ\phi_T$ induces the identity on $H^1_p(Y).$
 \end{proof}

% ----------------------------------------------------------------
\end{document}